\newtheorem{theorem}{Theorem}[section]
\newtheorem{lemma}[theorem]{Lemma}
\newtheorem{proposition}[theorem]{Proposition}
\newtheorem{corollary}[theorem]{Corollary}
\theoremstyle{definition}
\numberwithin{equation}{section}
\DeclareMathOperator{\inter}{int}
\DeclareMathOperator{\diam}{diam}
\DeclareMathOperator{\co}{co}
\DeclareMathOperator{\ch}{ch}
\begin{document}
\title[Wolff-Denjoy theorems]{Wolff--Denjoy theorems in geodesic spaces}
\author[A. Huczek]{Aleksandra Huczek}
\address{Department of Mathematics, Pedagogical University of Krakow,
PL-30-084 Cracow, Poland}
\email{aleksandra.huczek2@doktorant.up.krakow.pl}
\author[A. Wi\'{s}nicki]{Andrzej Wi\'{s}nicki}
\address{Department of Mathematics, Pedagogical University of Krakow,
PL-30-084 Cracow, Poland}
\email{andrzej.wisnicki@up.krakow.pl}
\date{}

\begin{abstract}
We show a Wolff--Denjoy type theorem in complete geodesic spaces in the
spirit of Beardon's framework that unifies several results in this area. In
particular, it applies to strictly convex bounded domains in $\mathbb{R}^{n}$
or $\mathbb{C}^{n}$ with respect to a large class of metrics including
Hilbert's and Kobayashi's metrics. The results are generalized to
$1$-Lipschitz compact mappings in infinite-dimesional Banach spaces.
\end{abstract}

\subjclass[2020]{Primary 32H50; Secondary 37D40, 46T25, 47H09, 51M10, 53C23}
\keywords{Wolff--Denjoy theorem, Geodesic space, Picard iteration,
Holomorphic dynamics, Kobayashi's distance, Hilbert's projective metric}
\maketitle

\section{Introduction}

The classical Wolff--Denjoy theorem asserts that if $f:\Delta \rightarrow
\Delta $ is a holomorphic map of the unit disc $\Delta \subset \mathbb{C}$
without a fixed point, then there is a point $\xi \in \partial \Delta $ such
that the iterates $f^{n}$ converge locally uniformly to $\xi $ on $\Delta $.
There is a wide literature concerning various generalizations of this
theorem. In complex variables, the first results are due to Herv\'{e} \cite
{He} and Abate \cite{Ab}. Wolff--Denjoy type theorems for fixed-point free $
1 $-Lipschitz maps on Hilbert's metric spaces were studied by Beardon \cite
{Be2} and were further developed by Karlsson \cite{Ka1, Ka2}, Karlsson and
Noskov \cite{KaNo}, Nussbaum \cite{Nu}, and recently by Lemmens et al. \cite
{LLNW}, to mention only a few papers.

Beardon argued in \cite{Be1} that the Wolff--Denjoy theorem is a purely
geometric result depending essentially only on the hyperbolic properties of
a metric and showed its counterpart for a large class of negatively curved
Riemannian spaces. In \cite{Be2}, he proposed a general approach that
avoided any smoothness assumptions on the metric space under consideration.
In particular, he proved a Wolff--Denjoy type theorem for contractive maps
on a strictly convex bounded domain $D\subset \mathbb{R}^{n}$ with Hilbert's
metric. The contractivity assumption was relaxed later by Karlsson
\cite{Ka1}. In the case of a convex bounded domain $D$, it was proved in \cite{KaNo}
that the attractor (in the norm topology) $\Omega _{f}=\bigcup_{x\in
	D}\omega _{f}(x)$ of a fixed point free nonexpansive (i.e., $1$-Lipschitz)
mapping $f:D\rightarrow D$ is a star-shaped subset of $\partial D$. This has
led to a conjecture formulated by Karlsson and Nussbaum that $\co \Omega
_{f}\subset \partial D$ (see \cite{Ka3, Nu}).

In 2012, Budzy\'{n}ska \cite{Bu1} (see also \cite{Bu2, BKR2, BKR3} for
infinite dimensional generalizations) obtained the Wolff--Denjoy theorem for
nonexpansive maps on a strictly convex bounded domain $D\subset \mathbb{C}
^{n}$ with the Kobayashi distance. Budzy\'{n}ska's arguments were sharpened
by Abate and Raissy in \cite{AbRa}. The question naturally arises, to what
extent can Beardon--Karlsson's and Budzy\'{n}ska--Abate--Raissy's arguments
interact with each other to gain a deeper insight into this problem.

The central observation in \cite{Be2} is that a Wolff--Denjoy type theorem
holds for any proper metric space $(Y,d)$ satisfying the following
condition:
\begin{equation}
d(x_{n},y_{n})- \max \{d(x_{n},w),d(y_{n},w)\} \rightarrow \infty 
\tag{B} 
\label{B}
\end{equation}
for any sequences $\{x_{n}\}$ and $\{y_{n}\}$ in $Y$ converging to distinct
points in $\partial Y$ and any $w\in Y$ (see Section 2 for more details).
The classical Wolff--Denjoy theorem follows then as a particular case of a
general theorem by using the cosine formula in hyperbolic geometry.
Condition (\ref{B}) is also satisfied for bounded strictly convex domains of
$\mathbb{R}^{n}$ with the Hilbert metric as a consequence of (a variant of)
the intersecting chord theorem (see \cite{Be2}). In Hilbert's geometry, the
geodesics are straight-line segments and we cannot expect such a regular
behaviour for strictly convex domains of $\mathbb{C}^{n}$ with the Kobayashi
distance. The arguments in the complex case in \cite{AbRa, Bu1} are more
intricate, and it is natural to try to refine their basic features in the
spirit of Beardon's work.

This is what we do in this paper. We extend Beardon's framework and show, in
particular, that in proper geodesic spaces condition (\ref{B}) can be
relaxed to the following:
\begin{equation}
d(x_{n},y_{n})-d(y_{n},w)\nrightarrow -\infty
\tag{B'}
\label{B'}
\end{equation}
for any sequences $\{x_{n}\}$ and $\{y_{n}\}$ in $Y$ converging to distinct
points in $\partial Y$ and any $w\in Y$ (see Theorem \ref{main2}). Section 3
contains a few variations on this theme, giving general results of
Wolff--Denjoy type in geodesic spaces.

Section 4 is devoted to the study of bounded strictly convex\ domains in
finite dimensional spaces. It follows from Proposition \ref{A3star} that
condition (\ref{B'}) is much less restrictive than Beardon's condition (\ref
{B}), and holds for any Euclidean metric for which balls are (linearly)
convex. Consequently, Theorem \ref{main_Rn} shows a general Wolff--Denjoy
type theorem for bounded strictly convex\ domains in a finite dimensional
space that, in particular, unifies Beardon's and Budzy\'nska's results
regarding Hilbert's and Kobayashi's metrics. Quite surprisingly, it holds
for any complete geodesic space with convex balls whose topology coincides
with the norm topology, and any `hyperbolic' property of a metric is not needed.
In Section 5 we extend the foregoing results to the case of $1$-Lipschitz compact
mappings in infinite dimensional spaces.

Section 6 discusses the recent characterization of geodesic boundedness of
spaces of negative curvature (see \cite{Pi1}). We apply Karlsson's
Wolff-Denjoy type theorem for Gromov hyperbolic spaces to give a more direct
proof of this interesting result.

\section{Preliminaries}

Let $(Y,d)$ be a metric space. If $x\in Y$ and $r>0$, then $B(x,r)$ and $
\bar{B}(x,r)$ denote the open and closed balls with midpoint $x$ and radius $
r.$ A curve $\sigma :[a,b]\rightarrow Y$ is a \textit{geodesic (segment)} if
$d(\sigma (t_{1}),\sigma (t_{2}))=\left\vert t_{1}-t_{2}\right\vert $ for
all $t_{1},t_{2}\in \lbrack a,b]$. By abuse of notation, the same name is
used for the image $\sigma ([a,b])\subset Y$ of $\sigma $, denoted by $
[\sigma (a),\sigma (b)].$ We say that $Y$ is a \textit{geodesic space} if
every two points of $Y$ can be joined by a geodesic. $Y$ is \textit{proper} if every
closed ball $\bar{B}(x,r)$ is compact. It follows from the Hopf-Rinow
theorem that every locally compact complete geodesic space is proper.

We are interested in the iteration of nonexpansive mappings acting on
geodesic spaces. A map $f:Y\rightarrow Y$ is called \textit{nonexpansive} if
for any $x,y\in Y,$ $d(f(x),f(y))\leq d(x,y)$. A map $f:Y\rightarrow Y$ is
called \textit{contractive }if for any distinct $x,y\in Y,$ $
d(f(x),f(y))<d(x,y).$

Following Beardon \cite{Be2} and Karlsson \cite{Ka1} we consider the
following properties of a metric space $(Y,d)$ that A. Karlsson called
axioms:\medskip

\noindent \textbf{Axiom 1}. The metric space $(Y,d)$ is an open dense subset
of a compact metric space $(\overline{Y},\overline{d})$, whose relative
topology coincides with the topology of $Y$. For any $w\in Y$, if $\{x_{n}\}$
is a sequence in $Y$ converging to $\xi \in \partial Y=\overline{Y}\setminus
Y$, then
\[
d(x_{n},w)\rightarrow \infty .
\]

\medskip \noindent \textbf{Axiom 2}. If $\{x_{n}\}$ and $\{y_{n}\}$ are
sequences in $Y$ converging to distinct points in $\partial Y$ then, for
every $w\in Y,$
\[
d(x_{n},y_{n})-\max \{d(x_{n},w),d(y_{n},w)\}\rightarrow \infty .
\]

\medskip \noindent \textbf{Axiom 3}. If $\{x_{n}\}$ and $\{y_{n}\}$ are
sequences in $Y$, $x_{n}\rightarrow \xi \in \partial Y$ and if for some $
w\in Y,$
\[
d(x_{n},y_{n})-d(y_{n},w)\rightarrow -\infty ,
\]
then $y_{n}\rightarrow \xi .$

\medskip \noindent \textbf{Axiom 4}. If $\{x_{n}\}$ and $\{y_{n}\}$ are
sequences in $Y$, $x_{n}\rightarrow \xi \in \partial Y$, and if for all $n,$
\[
d(x_{n},y_{n})\leq c
\]
for some constant $c$, then $y_{n}\rightarrow \xi .$ \medskip

Notice that Axiom $1$ implies that $Y$ is proper, i.e., closed bounded sets
are compact. Note that proper spaces are complete. Furthermore, Axioms $1,2$
imply\ Axiom $3$ and Axioms $1,3$ imply Axiom $4.$ For simplicity, we assume
that the compactification of $Y$ is metrizable but most results are valid
for any sequentially compact Hausdorff topological space $\overline{Y}$ (for
example, Prop. \ref{bhnonempty} is then a little narrower). We note that if $
w\in Y$, then $x_{n}\rightarrow w$ in $Y$ iff $x_{n}\rightarrow w$ in $
\overline{Y}$ but if $x_{n}\rightarrow w\in \partial Y$ (in $\overline{Y}$),
then from Axiom $1$, $d(x_{n},w)\rightarrow \infty $. For simplicity of
notation, we do not usually distinguish between these two types of
convergence.

The following theorem, proved by Ca\l ka in \cite{Ca}, is one of the
classical arguments in this line of research. In order to be self-contained
as much as possible, we present a direct proof due to Gou\"{e}zel \cite[
Lemma 2.6]{Go}.

\begin{theorem}
	\label{calka}Suppose that $(Y,d)$ is a proper metric space. Let $x_{0}\in Y$
	and $f:Y\rightarrow Y$ be a nonexpansive mapping. If there exists a sequence
	$\{n_{k}\}$ such that a subsequence $\{f^{n_{k}}(x_{0})\}$ is bounded then
	the orbit $O(x_{0})$ of $f$ is bounded.
\end{theorem}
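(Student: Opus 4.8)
The plan is to control the displacement sequence $g(n)=d(x_{0},f^{n}(x_{0}))$. First I would record two elementary consequences of nonexpansiveness. The sequence is subadditive: writing $f^{m+n}(x_{0})=f^{n}(f^{m}(x_{0}))$, one gets $g(m+n)\le d(x_{0},f^{m}(x_{0}))+d(f^{m}(x_{0}),f^{m}(f^{n}(x_{0})))\le g(m)+g(n)$. It also has uniformly bounded increments, since $d(f^{n}(x_{0}),f^{n+1}(x_{0}))\le d(x_{0},f(x_{0}))$. By Fekete's lemma the escape rate $\lim_{n}g(n)/n=\inf_{n}g(n)/n$ exists, and the hypothesis $g(n_{k})\le C$ forces it to vanish, so the orbit cannot drift linearly. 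This alone does not give boundedness (e.g.\ $\sqrt{n}$ is subadditive yet unbounded), so the metric structure and properness must be used.

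Next I would reduce boundedness to a statement about the excursions between consecutive returns. Put $N=\{n:g(n)\le C\}$, an infinite set by hypothesis. For any $n$ and any return $n_{k}\le n$, the triangle inequality together with nonexpansiveness gives $g(n)\le g(n_{k})+d(f^{n_{k}}(x_{0}),f^{n}(x_{0}))\le C+g(n-n_{k})$, since $d(f^{n_{k}}(x_{0}),f^{n}(x_{0}))=d(f^{n_{k}}(x_{0}),f^{n_{k}}(f^{n-n_{k}}(x_{0})))\le g(n-n_{k})$. Taking $n_{k}$ to be the largest return below $n$ reduces the task to bounding $g$ on a single gap $[n_{k},n_{k+1})$. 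If the gaps were uniformly bounded this would finish the proof immediately, so the entire difficulty is concentrated in the case of long gaps.

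This is where properness enters decisively. Since the points $\{f^{n_{k}}(x_{0})\}$ lie in the compact ball $\bar{B}(x_{0},C)$, I would pass to a subsequence with $f^{n_{k}}(x_{0})\to y$; comparing $f^{n_{k}}(x_{0})$ with $f^{n_{k+1}}(x_{0})$ and using continuity and nonexpansiveness of $f$ yields an approximate recurrence of the limit $y$, and correspondingly that the tail of the orbit is almost invariant under the shifts $n\mapsto n+(n_{l}-n_{k})$. The main obstacle, and the step I expect to demand the most care, is to convert this approximate recurrence into a genuinely uniform, index-independent bound on $g$ across the long gaps: a naive telescoping along an almost-period accumulates one small error per period and merely re-proves that the escape rate vanishes. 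I would resolve this by a compactness and limiting-orbit argument, extracting from the excursions a limit orbit that would be forced to leave every ball while its initial point is a cluster point of the bounded return set, contradicting the recurrence just established. It is precisely this argument, and not any hyperbolicity or curvature of the metric, that uses properness in an essential way.
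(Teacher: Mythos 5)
Your first two paragraphs are correct but, as you acknowledge, they only re-derive that the escape rate vanishes and reduce the problem to the long gaps between return times; everything therefore hinges on your third paragraph, and that is where the proof is missing rather than merely compressed. The ``limit orbit'' you propose to extract is the orbit of a cluster point $y$ of $\{f^{n_k}(x_0)\}$. Since $d(f^m(y),f^m(f^{n_k}(x_0)))\le d(y,f^{n_k}(x_0))\to 0$ uniformly in $m$, the orbit of $y$ is unbounded if and only if the orbit of $x_0$ is, it again contains a bounded subsequence (namely $f^{n_{k'}-n_k}(y)$ for large $k'$), and $y$ is recurrent in the sense that $\liminf_m d(f^m(y),y)=0$. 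But an unbounded orbit with a bounded subsequence issuing from a recurrent point is \emph{exactly} the configuration you are trying to rule out, so ``contradicting the recurrence just established'' yields no contradiction: you have merely transferred the hypothesis from $x_0$ to $y$, and the telescoping obstruction you correctly identified (one small error per almost-period) is fully present for $y$ as well. Moreover, the assertion that the limit orbit ``would be forced to leave every ball'' is the bounded-or-divergent dichotomy that is a \emph{consequence} of the theorem being proved, so invoking it here is circular.

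The missing idea --- the one the paper uses, following Gou\"ezel --- is to replace the single limit point by a \emph{finite} covering. One forms the compact set $B=\overline{O(x_0)}\cap\bar B(x,1)$ around the cluster point $x$, covers it by finitely many balls $B(x_i,\tfrac12)$ centered at orbit points $x_i=f^{m_i}(x_0)$, $i=1,\dots,N$, and chooses a single return time $n_k$ so large that $f^{n_k-m_i}(x_i)=f^{n_k}(x_0)\in B(x,\tfrac12)$ for all $i$ simultaneously; nonexpansiveness then gives the exact inclusion $f^{n_k-m_i}(B_i)\subset B$ for each piece $B_i=\overline{O(x_0)}\cap\bar B(x_i,\tfrac12)$. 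From these finitely many exact set inclusions an induction yields $f^n(B)\subset\bigcup_{m\le\overline m}f^m(B)$ for every $n$, a fixed bounded set containing a tail of the orbit. Because the inclusions are exact and there are only finitely many of them, no error accumulates per period --- this is precisely how the obstruction you flagged is defeated. Without this device (or an equivalent one), your argument does not close.
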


\begin{proof}
	Fix $x_{0}\in Y$ and consider a nonexpansive mapping $f:Y\rightarrow Y$. Let
	$O(x_{0})=\{f^{n}(x_{0}),n=1,2,\ldots \}$ be the orbit of $x_{0}$. By
	assumption, since $Y$ is proper, there exists a subsequence $
	\{f^{n_{k}}(x_{0})\}$ converging to some $x$. Hence there exists $k_{0}$
	such that $f^{n_{k}}(x_{0})\in B(x,\frac{1}{2})$ for all $k\geq k_{0}$.
	Consider the set $B=\overline{O(x_{0})}\cap \bar{B}(x,1)$ and notice that $B$
	is compact since $Y$ is proper. Hence there exists a finite number of balls $
	B(x_{i},\frac{1}{2}),$ $x_{i}\in O(x_{0}),i=1,\ldots,N$ which cover $B$. Let $
	x_{i}=f^{m_{i}}(x_{0})$ for each $i$ and take $k$ large enough such that $
	n_{k}-m_{i}>0$ for $i=1,\ldots,N,$ and
	\[
	f^{n_{k}}(x_{0})=f^{n_{k}-m_{i}}(f^{m_{i}}(x_{0}))=f^{n_{k}-m_{i}}(x_{i})\in
	B\Big(x,\frac{1}{2}\Big).
	\]
	Let $i\in \{1,\ldots,N\}, \ B_{i}=\overline{O(x_{0})}\cap \bar{B}(x_{i},\frac{1}{2}
	)$ and choose $y\in B_{i}.$ Since $f$ is nonexpansive we have
	\[
	d(f^{n_{k}-m_{i}}(y),x)\leq
	d(f^{n_{k}-m_{i}}(y),f^{n_{k}-m_{i}}(x_{i}))+d(f^{n_{k}-m_{i}}(x_{i}),x)\leq
	1.
	\]
	Since $y$ is arbitrary, $f^{n_{k}-m_{i}}(B_{i})\subset B$ for all $i$. Let $
	\overline{m}=\max_{i=1,\ldots,N}\{n_{k}-m_{i}\}$. Then, for $n>\overline{m},$
	\begin{equation}
	\begin{split}
	f^{n}(B) \subset \bigcup_{i}f^{n}(B_{i})=
	\bigcup_{i}f^{n-(n_{k}-m_{i})}(f^{n_{k}-m_{i}}(B_{i})) \\
	\subset \bigcup_{i}f^{n-(n_{k}-m_{i})}(B)\subset \bigcup_{m<n}f^{m}(B).
	\end{split}
	\label{bcups}
	\end{equation}
	We now proceed by induction. We have $f^{\overline{m}+1}(B)\subset
	\bigcup_{m\leq \overline{m}}f^{m}(B).$ Fix $n>\overline{m}$ and suppose that
	$f^{j}(B)\subset \bigcup_{m\leq \overline{m}}f^{m}(B)$ for all $j\leq n$.
	Then (\ref{bcups}) yields
	\[
	f^{n+1}(B)\subset \bigcup_{j\leq n}f^{j}(B)\subset \bigcup_{m\leq \overline
		{m }}f^{m}(B).
	\]
	Thus we obtain by induction that $f^{n}(B)\subset \bigcup_{m\leq \overline{m}
	}f^{m}(B)$ for all $n>\overline{m}$. Since $\bigcup_{m\leq \overline{m}
	}f^{m}(B)$ is bounded and $f^{n_{k_{0}}}(x_{0})\in B$ it follows that $
	O(x_{0})$ is bounded too.
\end{proof}

We note that the original Ca\l ka theorem concerns a metric space $M$ with
the property that each bounded subset is totally bounded but then the
completion of $M$ is proper and the above proof is valid also in this case.
Notice that Ca\l ka's theorem implies the orbits of nonexpansive mappings
acting on a proper space are either bounded or diverge to infinity. We shall
use this observation repeatedly in different contexts.

Another tool needed for investigation of the dynamics of iterates of
nonexpansive mappings is the notion of a horoball. We recall the general
definitions introduced by Abate in \cite{Ab}. Define the \textit{small
	horoball} of center $\xi \in \partial Y$, pole $z_{0}\in Y$ and radius $r\in
\mathbb{R}$ by
\[
E_{z_{0}}(\xi ,r)=\{y\in Y:\limsup_{w\rightarrow \xi }d(y,w)-d(w,z_{0})\leq
r\}
\]
and the \textit{big horoball }by
\[
F_{z_{0}}(\xi ,r)=\{y\in Y:\liminf_{w\rightarrow \xi }d(y,w)-d(w,z_{0})\leq
r\}.
\]
Arguing as in \cite[Lemma 5.1]{Bu1} we can show that the big horoball is
always nonempty.

\begin{proposition}
	\label{bhnonempty}Suppose that $(Y,d)$ is a proper geodesic space satisfying
	Axiom $1$, $z_{0}\in Y$, $r\in \mathbb{R}$ and $\xi \in \partial Y$. Then
	the big horoball $F_{z_{0}}(\xi ,r)$ is nonempty.
\end{proposition}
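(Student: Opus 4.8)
The plan is to show that the big horoball
$$F_{z_0}(\xi,r)=\{y\in Y:\liminf_{w\to\xi}d(y,w)-d(w,z_0)\le r\}$$
is nonempty by producing an explicit point in it. The natural candidate is a suitable limit of points lying on geodesic segments joining $z_0$ to points $x_n\to\xi$. Concretely, fix a sequence $\{x_n\}$ in $Y$ with $x_n\to\xi$, and for each $n$ let $\sigma_n:[0,d(z_0,x_n)]\to Y$ be a geodesic from $z_0$ to $x_n$, which exists because $Y$ is geodesic. By Axiom~1, $d(z_0,x_n)\to\infty$, so for $n$ large the segment has length exceeding any fixed bound.

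First I would locate, on each geodesic $\sigma_n$, the point $y_n=\sigma_n(t)$ at a fixed distance $t=t_0$ from $z_0$ (say $t_0=\max\{0,-r\}$ or any convenient value, chosen so that the target estimate comes out with the right constant). Since $Y$ is proper, the closed ball $\bar B(z_0,t_0)$ is compact, so the sequence $\{y_n\}$ has a convergent subsequence; passing to it, I may assume $y_n\to y\in\bar B(z_0,t_0)\subset Y$. This $y$ will be the claimed point of the horoball. The key quantitative input is the geodesic identity: because $y_n$ lies on a geodesic from $z_0$ to $x_n$,
$$d(z_0,x_n)=d(z_0,y_n)+d(y_n,x_n)=t_0+d(y_n,x_n),$$
so $d(y_n,x_n)-d(z_0,x_n)=-t_0$ for every $n$.

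Next I would convert this into the horoball estimate at the limit point $y$. For an arbitrary $w\in Y$ and each fixed $n$, the triangle inequality gives
$$d(y,w)-d(w,z_0)\le d(y,y_n)+d(y_n,w)-d(w,z_0).$$
Taking $\liminf_{w\to\xi}$ and using $d(y_n,w)-d(w,z_0)\to d(y_n,x_n)-d(z_0,x_n)$ along the defining sequence --- more carefully, estimating $d(y_n,w)-d(w,z_0)$ against $d(y_n,x_n)-d(z_0,x_n)+2d(x_n,w)$-type error terms --- I want to conclude that $\liminf_{w\to\xi}d(y,w)-d(w,z_0)\le -t_0+(\text{error that can be absorbed})\le r$ for the chosen $t_0$. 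In fact the cleanest route is to bound $\liminf_{w\to\xi}d(y_n,w)-d(w,z_0)$ directly by $d(y_n,x_n)-d(z_0,x_n)=-t_0\le r$ by testing the $\liminf$ along $w=x_m\to\xi$, which shows each $y_n$ (hence the limit $y$, after controlling $d(y,y_n)$) lies in $F_{z_0}(\xi,r)$.

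I expect the main obstacle to be the interchange of the limit in $n$ with the $\liminf$ over $w\to\xi$: the quantity $\liminf_{w\to\xi}d(y_n,w)-d(w,z_0)$ must be controlled uniformly enough that passing $y_n\to y$ preserves the inequality $\le r$. I would handle this by keeping $n$ fixed while evaluating the $\liminf$ (which only requires the single point $y_n$ and is bounded by $-t_0$ via the geodesic identity together with a $1$-Lipschitz continuity estimate $\bigl|(d(y,w)-d(w,z_0))-(d(y_n,w)-d(w,z_0))\bigr|\le d(y,y_n)$), and then letting $n\to\infty$ so that $d(y,y_n)\to0$. This reduces the whole argument to the geodesic identity plus compactness of balls, exactly as in \cite[Lemma 5.1]{Bu1}.
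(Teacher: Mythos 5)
Your construction is the same as the paper's (with the roles of the letters $x$ and $y$ swapped): take points at a fixed distance from $z_0$ on geodesics from $z_0$ toward a sequence tending to $\xi$, use properness of the closed ball to extract a limit point, and exploit the additivity of the distance along geodesics. However, the way you propose to assemble the final estimate contains a step that fails. You claim that for each \emph{fixed} $n$ one can bound $\liminf_{w\to\xi}\bigl(d(y_n,w)-d(w,z_0)\bigr)$ by $d(y_n,x_n)-d(z_0,x_n)=-t_0$ ``by testing the liminf along $w=x_m$.'' But for $m\neq n$ the point $y_n$ does not lie on the geodesic $[z_0,x_m]$, and the triangle inequality only gives the \emph{lower} bound $d(y_n,x_m)-d(x_m,z_0)\ge -d(y_n,z_0)=-t_0$; equality is guaranteed only at the single index $m=n$, which says nothing about the liminf as $m\to\infty$. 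In fact, when $r<0$ a fixed point $y_n$ at distance $t_0=-r$ from $z_0$ need not belong to $F_{z_0}(\xi,r)$ at all, so the intermediate claim ``each $y_n$ lies in $F_{z_0}(\xi,r)$'' is not just unproved but generally false. Your other suggested route, with ``$+2d(x_n,w)$-type error terms,'' also breaks down: for fixed $n$ the quantity $d(x_n,w)$ tends to $\infty$ as $w\to\xi$, so that error cannot be absorbed.

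The repair is exactly the paper's diagonal argument, and it is within reach of what you set up: first pass to the limit point $y=\lim_k y_{n_k}$, and only then evaluate the liminf, testing it along the \emph{same} subsequence $\{x_{n_k}\}$ that defines $y$. Writing $d(y,x_{n_k})\le d(y,y_{n_k})+d(y_{n_k},x_{n_k})$, the geodesic identity is applied at matching indices, giving
\[
\liminf_{w\to\xi}\bigl(d(y,w)-d(w,z_0)\bigr)\le\liminf_{k\to\infty}\bigl(d(y,y_{n_k})+d(y_{n_k},x_{n_k})-d(x_{n_k},z_0)\bigr)=-t_0,
\]
since $d(y,y_{n_k})\to 0$. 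So the order of limits must be: compactness first, liminf second, with the test sequence synchronized with the approximating points --- not ``fix $n$, take the liminf, then let $n\to\infty$.''
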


\begin{proof}
	Fix $z_{0}\in Y, \ \xi \in \partial Y$ and $r>0$. It follows from Axiom $1$
	that there exists a sequence $\{y_{n}\}$ in $Y$ converging to $\xi \in
	\partial Y$ such that $d(y_{n},z_{0})\rightarrow \infty .$ We can assume
	that $d(y_{n},z_{0})>r$ for all $n$. Let $x_{n}$ be a point on the geodesic
	segment $[z_{0},y_{n}]$ such that $d(x_{n},z_{0})=r.$ Then
	\[
	d(y_{n},z_{0})=d(y_{n},x_{n})+d(x_{n},z_{0})=d(y_{n},x_{n})+r.
	\]
	Since $Y$ is proper, $\bar{B}(z_{0},r)$ is compact and hence there exists a
	subsequence $\{x_{n_{k}}\}$ of $\{x_{n}\}$ converging to some $x\in Y$ such
	that $d(x_{n_{k}},z_{0})\rightarrow d(x,z_{0})=r$ as $k\rightarrow \infty $.
	Therefore we have
	\begin{eqnarray*}
		\liminf_{w\rightarrow \xi }d(x,w)-d(w,z_{0}) &\leq &\liminf_{k\rightarrow
			\infty }d(x,y_{n_{k}})-d(y_{n_{k}},z_{0}) \\
		&=&\liminf_{k\rightarrow \infty }d(x_{n_{k}},y_{n_{k}})-d(y_{n_{k}},z_{0}) \\
		&=&-r.
	\end{eqnarray*}
	Thus we get $x\in F_{z_{0}}(\xi ,-r)$.
\end{proof}

The following theorem extends \cite[Theorem 2.3]{Ab} concerning bounded
convex domains of $\mathbb{C}^{n}$ with the Kobayashi distance to general
metric spaces.

\begin{theorem}
	\label{inv}Suppose that $(Y,d)$ satisfies Axioms $1$ and $4$ and
	$f:Y\rightarrow Y$ is a nonexpansive mapping without a bounded orbit. Then
	for every $z_{0}\in Y$ there exists $\xi \in \partial Y$ such that
	\[
	f^{k}(E_{z_{0}}(\xi ,r))\subset F_{z_{0}}(\xi ,r)
	\]
	for every $k\in \mathbb{N}$ and $r\in \mathbb{R}$.
\end{theorem}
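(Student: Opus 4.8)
The plan is to locate the Wolff point $\xi$ as a boundary limit of a carefully chosen subsequence of the orbit $\{f^{n}(z_{0})\}$, and then to verify the two horoball inclusions by a direct estimate resting on nonexpansiveness. First I would fix $z_{0}\in Y$ and set $a_{n}=d(f^{n}(z_{0}),z_{0})$. Since $f$ has no bounded orbit, Theorem~\ref{calka} (Ca\l ka's theorem) forces $a_{n}\to\infty$: if some subsequence of the orbit were bounded, the whole orbit would be bounded. I then single out the \emph{record times} $n_{1}<n_{2}<\cdots$, i.e.\ those indices with $a_{n_{j}}>a_{m}$ for all $m<n_{j}$; because $a_{n}\to\infty$ there are infinitely many of them and $a_{n_{j}}\to\infty$. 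As $Y$ is proper (Axiom~1) and $\overline{Y}$ is compact, after passing to a further subsequence I may assume $w_{j}:=f^{n_{j}}(z_{0})\to\xi$ for some $\xi\in\overline{Y}$; since $d(w_{j},z_{0})=a_{n_{j}}\to\infty$, Axiom~1 excludes $\xi\in Y$, so $\xi\in\partial Y$. This $\xi$ is the required point.

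Next I fix $k\in\N$ and $y\in E_{z_{0}}(\xi,r)$, aiming at $f^{k}(y)\in F_{z_{0}}(\xi,r)$. For $n_{j}\geq k$ put $u_{j}=f^{n_{j}-k}(z_{0})$, so that $w_{j}=f^{k}(u_{j})$. Nonexpansiveness gives $d(f^{k}(y),w_{j})=d(f^{k}(y),f^{k}(u_{j}))\leq d(y,u_{j})$, whence
\[
d(f^{k}(y),w_{j})-d(w_{j},z_{0})\leq d(y,u_{j})-a_{n_{j}}.
\]
The record property yields $a_{n_{j}}>a_{n_{j}-k}=d(u_{j},z_{0})$, so the right-hand side is at most $d(y,u_{j})-d(u_{j},z_{0})$. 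Moreover $d(w_{j},u_{j})=d(f^{n_{j}-k}(f^{k}(z_{0})),f^{n_{j}-k}(z_{0}))\leq d(f^{k}(z_{0}),z_{0})$, a constant depending only on $k$; since $w_{j}\to\xi$, Axiom~4 gives $u_{j}\to\xi$. Taking $\liminf_{j}$ and using that the liminf (resp.\ limsup) over $w\to\xi$ is at most (resp.\ at least) the corresponding quantity along $\{w_{j}\}$ (resp.\ $\{u_{j}\}$), I obtain
\[
\liminf_{w\to\xi}\bigl[d(f^{k}(y),w)-d(w,z_{0})\bigr]\leq\liminf_{j}\bigl[d(y,u_{j})-d(u_{j},z_{0})\bigr]\leq\limsup_{w\to\xi}\bigl[d(y,w)-d(w,z_{0})\bigr]\leq r,
\]
the last inequality because $y\in E_{z_{0}}(\xi,r)$. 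This is exactly $f^{k}(y)\in F_{z_{0}}(\xi,r)$, and as $k\in\N$ and $r\in\R$ were arbitrary the theorem follows (when $E_{z_{0}}(\xi,r)=\varnothing$ the inclusion is vacuous).

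The delicate point is the passage turning the bound $d(y,u_{j})-a_{n_{j}}$ into $d(y,u_{j})-d(u_{j},z_{0})$ \emph{without} losing a constant. An arbitrary boundary limit point of the orbit would only give $\lvert a_{n_{j}}-a_{n_{j}-k}\rvert\leq d(f^{k}(z_{0}),z_{0})$, which places $f^{k}(y)$ merely in the enlarged horoball $F_{z_{0}}(\xi,r+d(f^{k}(z_{0}),z_{0}))$. Choosing $\xi$ along the record times of $n\mapsto d(f^{n}(z_{0}),z_{0})$ is precisely what makes $a_{n_{j}}\geq a_{n_{j}-k}$ and absorbs this error exactly, while the control $d(w_{j},u_{j})\leq d(f^{k}(z_{0}),z_{0})$ together with Axiom~4 keeps $u_{j}\to\xi$ and hence the estimate honest. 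I expect this selection of the subsequence to be the main obstacle; the remainder is bookkeeping with the triangle inequality and the definitions of the small and big horoballs.
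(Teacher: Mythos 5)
Your proof is correct and follows essentially the same route as the paper's: select the boundary point $\xi$ along Karlsson's ``record times'' of $n\mapsto d(f^{n}(\cdot),z_{0})$, use that monotonicity to replace $d(f^{n_{j}}(\cdot),z_{0})$ by $d(f^{n_{j}-k}(\cdot),z_{0})$ without any error term, and invoke Axiom~4 to keep the shifted subsequence converging to the same $\xi$. The only cosmetic difference is that you run the argument on the orbit of $z_{0}$ itself, whereas the paper iterates an arbitrary $y\in Y$; both choices are equally valid here.
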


\begin{proof}
	Fix $y,z_{0}\in Y$. It follows from Axiom $1$ that $Y$ is proper and hence,
	by Ca\l ka's theorem, $\lim_{n\rightarrow \infty }d(f^{n}(y),z_{0})=\infty $.
	By Observation 3.1 of \cite{Ka1}, there is a subsequence $\{f^{n_{i}}(y)\}$
	of $\{f^{n}(y)\}$ such that $d(f^{m}(y),z_{0})<d(f^{n_{i}}(y),z_{0})$ for
	each $m<n_{i},\ i=1,2,\ldots $. We can assume from Axiom $1$ (by passing to
	another subsequence if necessary) that $\{f^{n_{i}}(y)\}$ converges in
	$\overline{Y}$ to some point $\xi $, and $\xi \in \partial Y$ (because
	otherwise $\{d(f^{n_{i}}(y),z_{0})\}$ would be bounded). Fix $k\in \mathbb{N}$,
	$r\in \mathbb{R}$ and let $z\in E_{z_{0}}(\xi ,r)$. Then
	\begin{eqnarray*}
		\liminf_{w\rightarrow \xi }d(f^{k}(z),w)-d(w,z_{0}) &\leq
		&\liminf_{i\rightarrow \infty }d(f^{k}(z),f^{n_{i}}(y))-d(f^{n_{i}}(y),z_{0})
		\\
		&\leq &\liminf_{i\rightarrow \infty
		}d(z,f^{n_{i}-k}(y))-d(f^{n_{i}}(y),z_{0}) \\
		&\leq &\liminf_{i\rightarrow \infty
		}d(z,f^{n_{i}-k}(y))-d(f^{n_{i}-k}(y),z_{0}) \\
		&\leq &\limsup_{i\rightarrow \infty
		}d(z,f^{n_{i}-k}(y))-d(f^{n_{i}-k}(y),z_{0})
	\end{eqnarray*}%
	Since
	\[
	d(f^{n_{i}-k}(y),f^{n_{i}}(y))\leq d(y,f^{k}(y))=c
	\]%
	and $f^{n_{i}}(y)\rightarrow \xi $ we deduce from Axiom $4$ that
	$f^{n_{i}-k}(y)\rightarrow \xi $, too. Thus
	\[
	\liminf_{w\rightarrow \xi }d(f^{k}(z),w)-d(w,z_{0})\leq
	\limsup_{w\rightarrow \xi }d(z,w)-d(w,z_{0})\leq r.
	\]
\end{proof}

Theorem \ref{inv} is a very useful tool in proving Wolff--Denjoy type
theorems provided small horospheres $E_{z_{0}}(\xi ,r)$ are nonempty for
every $r\in \mathbb{R}$ (see \cite{Ab}). However, it is not clear whether it
is true in general, even for strictly convex domains in $\mathbb{C}^{n}$
with the Kobayashi distance. In the next section we show how to overcome
this difficulty.

We conclude this section by recalling the definitions of Hilbert's and
Thompson's metrics on a cone, and the Kobayashi distance. Let $K$ be a
closed normal cone with non-empty interior in a real Banach space $V$. We
say that $y\in K$ \textit{dominates} $x\in V$ if there exists $\alpha ,\beta
\in \mathbb{R}$ such that $\alpha y\leq x\leq \beta y$. This notion yields
on $K$ an equivalence relation $\sim _{K}$ by $x\sim _{k}y$ if $x$ dominates
$y$ and $y$ dominates $x$. For all $x,y\in K$ such that $x\sim _{K}y$ and
$y\neq 0,$ define
\[
M(x/y)=\inf \{\beta >0:x\leq \beta y\}
\]
and
\[
m(x/y)=\sup \{\alpha >0:\alpha y\leq x\}.
\]
The\textit{\ Hilbert (pseudo-)metric} is defined by
\[
d_{H}(x,y)=\log \bigg(\frac{M(x/y)}{m(x/y)}\bigg),
\]
and the\textit{\ Thompson metric} is
\[
d_{T}(x,y)=\log (\max \{M(x/y),M(y/x)\}).
\]
Moreover, we put $d_{H}(0,0)=d_{T}(0,0)=0$ and $d_{H}(x,y)=d_{T}(x,y)=\infty
$ if $x\nsim _{K}y$. It can be shown that $d_{T}$ is a metric on each
equivalence class of $K$, while $d_{H}(x,y)=0$ iff $x=\lambda y$ for some $
\lambda >0.$

If $\varphi \in V^{\ast }$ is a strictly positive functional, then $\Sigma
_{\varphi }^{\circ }=\{x\in \inter K:\varphi (x)=1\}$ endowed with Hilbert's
metric is a complete geodesic space whose topology coincides with the norm
topology (see, e.g., \cite{LeNu1, Th}). In finite dimensional spaces, the
set $\Sigma _{\varphi }^{\circ }$ is bounded in norm but it may be unbounded
in infinite dimensional spaces. If $D$ is a bounded open convex subset of
$V$, then there exist a closed normal cone with non-empty interior $K$ in $
V\times \mathbb{R}$ and a strictly positive functional $\varphi \in (V\times
\mathbb{R})^{\ast }$ such that $D$ is isometric to $\Sigma _{\varphi
}^{\circ }$ and thus it is equipped with Hilbert's metric $d_{H}$ (see,
e.g., \cite[p. 206]{Nu}). If $x,y \in D$, consider the straight line
passing through $x$ and $y$ that intersects $\partial D$ in
precisely two points $a$ and $b$. Assuming that $x$ is between $a$ and $y$, and $y$ is between $x$ and $b$, we define the cross-ratio
metric
\[
\widehat{d}_{H}(x,y)=\log \left( \frac{\left\Vert y-a\right\Vert \left\Vert
	x-b\right\Vert }{\left\Vert x-a\right\Vert \left\Vert
	y-b\right\Vert }\right) .
\]
It is well known that $d_{H}(x,y)=\widehat{d}_{H}(x,y)$ for every $x,y\in D.$
The following estimation will be needed in Sections 4 and 5 (see \cite{LeNu,
	Nu0}).

\begin{lemma}
	\label{h}Let $K\subset V$ be a closed normal cone with non-empty interior, $
	\varphi \in V^{\ast }$ a positive functional and $\Sigma _{\varphi }^{\circ
	}=\{x\in \inter K:\varphi (x)=1\}.$ Then $d_{H}(sx+(1-s)y,z)\leq \max
	\{d_{H}(x,z),d_{H}(y,z)\}$ for all $x,y,z\in \Sigma _{\varphi }^{\circ }$
	and $s\in \lbrack 0,1]$.
\end{lemma}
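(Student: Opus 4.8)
The plan is to reduce the quasiconvexity of $w\mapsto d_{H}(w,z)$ to an elementary inequality about mediants. Write $u=sx+(1-s)y$ and note first that $u\in \Sigma _{\varphi }^{\circ }$: indeed $\varphi (u)=s\varphi (x)+(1-s)\varphi (y)=1$, and $\inter K$ is convex, so $u$ lies in the interior and the points $x,y,z,u$ are pairwise equivalent with finite Hilbert distances. In particular the quantities $M(\cdot /z)$ and $m(\cdot /z)$ are finite and strictly positive.

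Next I would record the order relations that define $M$ and $m$. Because $K$ is closed, the infimum defining $M(x/z)$ is realized as an order relation: from $x\leq \beta z$ with $\beta \downarrow M(x/z)$ and closedness of $K$ one gets $x\leq M(x/z)\,z$, and likewise $m(x/z)\,z\leq x$ (and the analogous relations for $y$). Then, using positivity of $s,1-s$ and that $K$ is a convex cone,
\[
u=sx+(1-s)y\leq \bigl(sM(x/z)+(1-s)M(y/z)\bigr)z,\qquad \bigl(sm(x/z)+(1-s)m(y/z)\bigr)z\leq u.
\]
The first relation yields $M(u/z)\leq sM(x/z)+(1-s)M(y/z)$ since $M(u/z)$ is an infimum, and the second yields $m(u/z)\geq sm(x/z)+(1-s)m(y/z)$ since $m(u/z)$ is a supremum.

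Setting $a=M(x/z)$, $b=m(x/z)$, $c=M(y/z)$, $e=m(y/z)$, these two bounds give
\[
\frac{M(u/z)}{m(u/z)}\leq \frac{sa+(1-s)c}{sb+(1-s)e}.
\]
The heart of the argument, and the only step that is not pure bookkeeping, is the mediant inequality
\[
\frac{sa+(1-s)c}{sb+(1-s)e}\leq \max \Bigl\{\tfrac{a}{b},\tfrac{c}{e}\Bigr\}
\]
for positive reals and $s\in [0,1]$. I would prove it by assuming without loss of generality that $a/b\geq c/e$, i.e.\ $ae\geq bc$; then $\tfrac{a}{b}\bigl(sb+(1-s)e\bigr)=sa+(1-s)\tfrac{ae}{b}\geq sa+(1-s)c$, which rearranges to the claim after dividing by $sb+(1-s)e>0$.

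Combining the last two displays and taking logarithms gives
\[
d_{H}(u,z)=\log \frac{M(u/z)}{m(u/z)}\leq \max \{d_{H}(x,z),d_{H}(y,z)\},
\]
as desired. I expect no genuine obstacle beyond the mediant inequality; the one point deserving care is the passage from the infimum/supremum defining $M$ and $m$ to honest cone inequalities, which relies on $K$ being closed, and the verification that $u$ remains in $\Sigma _{\varphi }^{\circ }$ so that all distances involved are finite.
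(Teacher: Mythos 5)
Your proof is correct and follows essentially the same route as the paper's: both establish $\bigl(sm(x/z)+(1-s)m(y/z)\bigr)z\leq sx+(1-s)y\leq \bigl(sM(x/z)+(1-s)M(y/z)\bigr)z$ and then bound the resulting ratio by the maximum of the two original ratios (the paper phrases your mediant inequality as a convex combination of $\beta_{1}/\alpha_{1}$ and $\beta_{2}/\alpha_{2}$ with weights summing to one). Your added care about attainment of the infimum and supremum via closedness of $K$ is a reasonable refinement but not a different argument.
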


\begin{proof}
	Fix $x,y,z\in \Sigma _{\varphi }^{\circ }$ and let $r=\max
	\{d_{H}(x,z),d_{H}(y,z)\}$. By the definition of Hilbert's metric, we obtain
	that $\alpha _{1}z\leq x\leq \beta _{1}z$ and $\alpha _{2}z\leq y\leq \beta
	_{2}z$, where $0\leq \frac{\beta _{1}}{\alpha _{1}},\frac{\beta _{2}}{\alpha
		_{2}}\leq e^{r}$. If $s\in \lbrack 0,1]$ then
	\begin{equation}
	\alpha _{s}z=(s\alpha _{1}+(1-s)\alpha _{2})z\leq sx+(1-s)y\leq (s\beta
	_{1}+(1-s)\beta _{2})z=\beta _{s}z.  \label{hmmax}
	\end{equation}
	Hence
	\begin{eqnarray*}
		\tfrac{\beta _{s}}{\alpha _{s}} &=&\tfrac{s\beta _{1}+(1-s)\beta _{2}}
		{\alpha _{s}}=\tfrac{s\alpha _{1}}{\alpha _{s}}\cdot \tfrac{\beta _{1}}
		{\alpha _{1}}+\tfrac{(1-s)\alpha _{2}}{\alpha _{s}}\cdot \tfrac{\beta _{2}}
		{\alpha _{2}} \\
		&\leq &\tfrac{s\alpha _{1}}{\alpha _{s}}\cdot e^{r}+\tfrac{(1-s)\alpha _{2}}
		{\alpha _{s}}\cdot e^{r}=\tfrac{s\alpha _{1}+(1-s)\alpha _{2}}{\alpha _{s}}
		\cdot e^{r} \\
		&=&e^{\max \{d_{H}(x,z),d_{H}(y,z)\}}.
	\end{eqnarray*}
	From the above and (\ref{hmmax}) we have
	\[
	d_{H}(sx+(1-s)y,z)\leq \max \{d_{H}(x,z),d_{H}(y,z)\}.
	\]
\end{proof}

A similar argument works for Thompson's metric. The same estimation holds
also for the Kobayashi distance (see, e.g., \cite{Ab1}). Recall that if $D$
is a bounded convex domain of a complex Banach space $V$, then the \textit{Kobayashi distance} between $z,w\in D$ is given by
\[
k_{D}(z,w)=\inf \{k_{\Delta }(0,\gamma )\mid \exists \varphi \in \mbox{Hol}
(\Delta ,D):\varphi (0)=z,\,\varphi (\gamma )=w\},
\]
where $k_{\Delta }$ denotes the Poincar\'{e} metric on the unit disc $\Delta
$.

\begin{lemma}
	\label{k}Let $D\subset V$ be a bounded convex domain. Then for all $x,y,z\in
	D$ and $s\in \lbrack 0,1]$ $k_{D}(sx+(1-s)y,z)\leq \max
	\{k_{D}(x,z),k_{D}(y,z)\}$.
\end{lemma}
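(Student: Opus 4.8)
The statement is exactly the assertion that the Kobayashi balls centred at $z$ are convex, and the natural route is to manufacture, for each $\varepsilon>0$, a single analytic disc passing through $sx+(1-s)y$ and $z$ whose Poincar\'e length is within $\varepsilon$ of $\max\{k_{D}(x,z),k_{D}(y,z)\}$. The plan is to choose near-optimal discs for the two pairs $(x,z)$ and $(y,z)$ separately and then glue them by a convex combination, the point being that convexity of $D$ keeps the combination inside $D$.

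First I would set $r=\max\{k_{D}(x,z),k_{D}(y,z)\}$, fix $\varepsilon>0$, and use the definition of $k_{D}$ to pick $\varphi_{1},\varphi_{2}\in\mathrm{Hol}(\Delta,D)$ together with $\gamma_{1},\gamma_{2}\in\Delta$ such that $\varphi_{1}(0)=x$, $\varphi_{1}(\gamma_{1})=z$, $\varphi_{2}(0)=y$, $\varphi_{2}(\gamma_{2})=z$, and $k_{\Delta}(0,\gamma_{i})\le r+\varepsilon$. Composing each $\varphi_{i}$ with the rotation $\zeta\mapsto(\gamma_{i}/\lvert\gamma_{i}\rvert)\zeta$, which leaves $k_{\Delta}(0,\cdot)$ invariant, I may assume that $\gamma_{i}=t_{i}\in[0,1)$ is real and nonnegative; each disc now sends $0$ to its target point and the real parameter $t_{i}$ to $z$.

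The obstacle is that in general $t_{1}\ne t_{2}$, so the naive combination $s\varphi_{1}+(1-s)\varphi_{2}$ sends $t_{1}$ to $sz+(1-s)\varphi_{2}(t_{1})\ne z$. To force both discs to meet $z$ at a common parameter I would put $t=\max\{t_{1},t_{2}\}$ and precompose with the scalings $h_{i}(\zeta)=(t_{i}/t)\zeta$; since $t_{i}\le t$ these are holomorphic self-maps of $\Delta$ fixing $0$. This is the one place where the direction matters: a $0$-fixing self-map of $\Delta$ can only decrease the modulus (Schwarz's lemma), which is precisely why one scales toward the \emph{larger} parameter. Writing $\hat\varphi_{i}=\varphi_{i}\circ h_{i}$, one gets $\hat\varphi_{1}(0)=x$, $\hat\varphi_{2}(0)=y$ and $\hat\varphi_{1}(t)=\hat\varphi_{2}(t)=z$.

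Finally I would set $\Phi=s\hat\varphi_{1}+(1-s)\hat\varphi_{2}$. Convexity of $D$ gives $\Phi\in\mathrm{Hol}(\Delta,D)$, while $\Phi(0)=sx+(1-s)y$ and $\Phi(t)=z$, whence $k_{D}(sx+(1-s)y,z)\le k_{\Delta}(0,t)$. Since $k_{\Delta}(0,\cdot)$ is increasing on $[0,1)$ we have $k_{\Delta}(0,t)=\max\{k_{\Delta}(0,t_{1}),k_{\Delta}(0,t_{2})\}\le r+\varepsilon$, and letting $\varepsilon\to0$ concludes the argument. I expect the reparametrization step, namely arranging that both discs hit $z$ at the same point of $\Delta$ without lengthening either one, to be the only genuine difficulty; the rest is routine bookkeeping with the rotation- and Schwarz-invariances of the Poincar\'e metric and the preservation of holomorphy under convex combinations in a Banach space.
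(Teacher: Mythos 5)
Your proof is correct and follows essentially the same route as the paper: take near-optimal analytic discs through $(x,z)$ and $(y,z)$, reparametrize so that both hit $z$ at the same point of $\Delta$ (the paper rescales the shorter disc by $\gamma\mapsto\xi\gamma/\zeta$ exactly as you do with $h_{i}$), and then use convexity of $D$ to take the convex combination of the two discs. The only differences are cosmetic: you symmetrize via $\max\{t_{1},t_{2}\}$ and an explicit rotation, where the paper assumes without loss of generality that $k_{D}(x,z)>k_{D}(y,z)$ and orders the parameters accordingly.
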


\begin{proof}
	Fix $x,y,z\in D$ and $s\in \lbrack 0,1]$. Without loss of generality we can
	assume that $k_{D}(x,z)>k_{D}(y,z)$. Then for every $\varepsilon >0$ there
	exist holomorphic mappings $f,g:\Delta \rightarrow D$ and $0\leq \xi <\zeta
	<1$ such that $f(0)=x$, $g(0)=y$, $f(\zeta )=g(\xi )=z,$ $k_{\Delta
	}(0,\zeta )<k_{D}(x,z)+\varepsilon $ and $k_{\Delta }(0,\xi
	)<k_{D}(y,z)+\varepsilon $. Since $D$ is convex we can define a holomorphic
	mapping $h:\Delta \rightarrow D$ by
	\[
	h(\gamma )=sf(\gamma )+(1-s)g\Big(\frac{\xi \gamma }{\zeta }\Big),\;\gamma
	\in \Delta.
	\]
	Hence
	\[
	h(0)=sf(0)+(1-s)g(0)=sx+(1-s)y
	\]
	and
	\[
	h(\zeta )=sf(\zeta )+(1-s)g(\xi)=z.
	\]
	From the above and since, as a holomorphic, the mapping $h$ is nonexpansive
	we have
	\[
	k_{D}(sx+(1-s)y,z)=k_{D}(h(0),h(\zeta ))\leq k_{\Delta }(0,\zeta
	)<k_{D}(x,z)+\varepsilon .
	\]
	Since $\varepsilon $ is arbitrary we get
	\[
	k_{D}(sx+(1-s)y,z)\leq \max \{k_{D}(x,z),k_{D}(y,z)\}.
	\]
\end{proof}

\section{Wolff--Denjoy theorems in proper spaces}

In this section, $(Y,d)$ always denotes a complete locally compact geodesic
space and we will assume that $Y$ satisfies Axiom $1.$ This involves no loss
of generality since the Hopf-Rinow theorem implies that $Y$ is proper and
hence admits a compactification satisfying Axiom $1$. From now on we denote
by $\overline{A}$ the closure of $A\subset Y$ with respect to the topology
of $(\overline{Y},\overline{d}).$

The following lemma is one of the basic tools in our consideration.

\begin{lemma}
	\label{lem1}Suppose that $f:Y\rightarrow Y$ is a nonexpansive mapping
	without a bounded orbit. Then there exists $\xi \in \partial Y$ such that
	for every $z_{0}\in Y$, $r\in \mathbb{R}$ and a sequence of natural numbers $
	\{a_{n}\}$, there exists $z\in Y$ and a subsequence $\{a_{n_{k}}\}$ of $
	\{a_{n}\}$ such that $f^{a_{n_{k}}}(z)\in F_{z_{0}}(\xi ,r)$ for every $k\in
	\mathbb{N}$. Moreover, if $Y$ satisfies Axiom $4$, then $\xi \in
	\bigcap_{r\in \mathbb{R}}\overline{F_{z_{0}}(\xi ,r)}.$
\end{lemma}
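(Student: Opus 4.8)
The plan is to produce a single boundary point $\xi$ from the orbit of one fixed base point and then, for arbitrary data $z_0,r,\{a_n\}$, to manufacture the required preimages by a compactness/diagonal argument. First I fix $x_0\in Y$. Since $f$ has no bounded orbit, Theorem~\ref{calka} forces $d(f^n(x_0),x_0)\to\infty$, so, exactly by the record argument used in the proof of Theorem~\ref{inv}, I can choose times $n_i\uparrow\infty$ with $d(f^m(x_0),x_0)<d(f^{n_i}(x_0),x_0)$ for every $m<n_i$; Axiom~1 and compactness of $\overline Y$ then let me pass to a subsequence so that $y_i:=f^{n_i}(x_0)\to\xi\in\partial Y$. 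This $\xi$ depends only on $f$ and the fixed choices, not on $z_0,r,\{a_n\}$.

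Next I record the basic estimate that drives everything. For any $N$ and any $n_j>N$, nonexpansiveness gives $d(f^N(x_0),y_j)\le d(x_0,f^{\,n_j-N}(x_0))$, and the record property gives $d(x_0,f^{\,n_j-N}(x_0))<d(x_0,y_j)$; hence $\liminf_j\big(d(f^N(x_0),y_j)-d(y_j,x_0)\big)\le0$, i.e. every orbit point of $x_0$ lies in $F_{x_0}(\xi,0)$. Since $|d(w,z_0)-d(w,x_0)|\le d(z_0,x_0)$ for all $w$, one gets $F_{x_0}(\xi,0)\subset F_{z_0}(\xi,d(z_0,x_0))$, which already settles the lemma when $r\ge d(z_0,x_0)$: take $z=x_0$ and the full sequence $\{a_n\}$, so that $f^{a_n}(x_0)\in F_{z_0}(\xi,r)$ for every $n$.

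The substance is the deep case, $r$ very negative, where the iterates must reach far toward $\xi$. Put $\rho=\max\{0,-r\}$. Imitating Proposition~\ref{bhnonempty}, for large $i$ the point on a geodesic $[z_0,y_i]$ at distance $\rho$ from $z_0$ has a subsequential limit lying in $F_{z_0}(\xi,r)$; the obstruction is that such a limit need not be a value $f^{a_{n_k}}(z)$, because $f$ need not be onto and the small horospheres $E_{z_0}(\xi,r)$ may be empty — this is precisely the difficulty the lemma is meant to circumvent. I would therefore build $z$ by a diagonal argument: for each $k$ select a record index $i_k$ and a time $a_{n_k}$ from the given sequence, choose an approximating point $z_k$ lying in a fixed closed ball (so that $\{z_k\}$ is precompact by properness) whose image $f^{a_{n_k}}(z_k)$ is forced into $F_{z_0}(\xi,r)$ by the record estimate above together with nonexpansiveness, extract $z_k\to z$, and absorb the vanishing error $d(z,z_k)$ into the horoball radius (if $d(v,v')\le\delta$ then $v\in F_{z_0}(\xi,r)$ implies $v'\in F_{z_0}(\xi,r+\delta)$). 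I expect this simultaneous control to be the main obstacle: keeping the approximants precompact in $Y$ while guaranteeing that their images descend to depth $\rho$ along a subsequence of the entirely arbitrary sequence $\{a_n\}$, balancing the record structure against properness.

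Finally, for the ``moreover'' part under Axiom~4, I would apply the main part to a conveniently chosen sequence $\{a_n\}$ and arrange the construction so that each selected image $f^{a_{n_k}}(z)$ stays within a fixed bounded distance of the record point $y_{i_k}\to\xi$. Axiom~4 then promotes this to $f^{a_{n_k}}(z)\to\xi$ in $\overline Y$. Since every $f^{a_{n_k}}(z)$ lies in $F_{z_0}(\xi,r)$, this exhibits a sequence inside $F_{z_0}(\xi,r)$ converging to $\xi$, so $\xi\in\overline{F_{z_0}(\xi,r)}$; as $r\in\mathbb{R}$ was arbitrary, $\xi\in\bigcap_{r\in\mathbb{R}}\overline{F_{z_0}(\xi,r)}$.
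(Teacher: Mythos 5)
Your skeleton matches the paper's: fix a base point $y$, use Ca\l ka's theorem to get $d(f^n(y),y)\to\infty$, pass to record times $\varphi(i)$ and extract $\xi=\lim_i f^{\varphi(i)}(y)\in\partial Y$, then diagonalize; and your treatment of the ``moreover'' part (apply the main claim with $a_i=\varphi(i)$ and use $d(f^{\varphi(i_k)}(y),f^{\varphi(i_k)}(z))\le d(y,z)$ plus Axiom 4) is essentially the paper's. But the proof has a genuine gap exactly where you flag ``the main obstacle'': you never say what the approximating points $z_k$ are, only that their images should be ``forced into $F_{z_0}(\xi,r)$ by the record estimate together with nonexpansiveness.'' That forcing does not come for free, and identifying the right points is the entire content of the lemma. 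Your warm-up estimate only places the orbit of $x_0$ in $F_{x_0}(\xi,0)$, which handles the trivial range $r\ge d(z_0,x_0)$; for $r$ very negative nothing in your outline produces a point whose $a_{n_k}$-th image descends to depth $-\rho$.

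The missing idea is where the geodesic hypothesis enters. For each $n$ and each sufficiently late record index, take $z^n_{\varphi_{n-1}(i)}$ to be the point on the geodesic segment $[y,\,f^{\varphi_{n-1}(i)-a_n}(y)]$ at distance exactly $r$ from $y$, so that
\[
d\bigl(z^n_{\varphi_{n-1}(i)},\,f^{\varphi_{n-1}(i)-a_n}(y)\bigr)=d\bigl(y,f^{\varphi_{n-1}(i)-a_n}(y)\bigr)-r .
\]
Properness of $\bar B(y,r)$ gives a subsequential limit $z_n$ with $d(z_n,f^{\varphi_n(i)-a_n}(y))-d(f^{\varphi_n(i)-a_n}(y),y)\to -r$. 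Applying $f^{a_n}$ and using nonexpansiveness, $d(f^{a_n}(z_n),f^{\varphi_n(i)}(y))\le d(z_n,f^{\varphi_n(i)-a_n}(y))$, while the record property converts the subtracted term via $d(f^{\varphi_n(i)-a_n}(y),y)\le d(f^{\varphi_n(i)}(y),y)$; together these push $f^{a_n}(z_n)$ into a horoball of radius roughly $-r$. Because every $z_n$ lies in the single compact ball $\bar B(y,r)$, you can extract $z_{n_k}\to z$ with $d(z_{n_k},z)<\tfrac12$ and run the diagonal argument over the nested subsequences $\{\varphi_n(i)\}$ to get one $z$ serving all $a_{n_k}$ simultaneously, at the cost of an additive $1$ absorbed by the arbitrariness of $r$. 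Without this explicit construction your argument is a plan, not a proof: there is no mechanism guaranteeing that any precompact family of approximants has images reaching the prescribed depth along a subsequence of an arbitrary $\{a_n\}$.
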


\begin{proof}
	Fix $y\in Y.$ Since $d_{n}=d(f^{n}(y),y)$ is not bounded, the Ca\l ka
	theorem \ref{calka} yields $d_{n}\rightarrow \infty $ as $n\rightarrow
	\infty $. By Observation 3.1 in \cite{Ka1}, there is a sequence $\{\varphi
	(i)\}$ of natural numbers such that $d_{m}<d_{\varphi (i)}$ for $m<\varphi
	(i), \ i=1,2,\ldots $. Since $Y$ satisfies Axiom $1$, we can assume by passing
	to another subsequence that $\{f^{\varphi (i)}(y)\}$ converges to some point
	$\xi \in \partial Y.$ Fix $r>0$ and a sequence of natural numbers $
	\{a_{n}\}. $ On each geodesic segment $[y,f^{\varphi (i)-a_{1}}(y)]\subset Y$
	(assuming that $\varphi (i)>a_{1}$ and $d(f^{\varphi (i)-a_{1}}(y),y)>r$)
	joining $y$ to $f^{\varphi (i)-a_{1}}(y)$ select a point $z_{\varphi
		(i)}^{1} $ such that $d(z_{\varphi (i)}^{1},y)=r$. Since $Y$ is proper there
	is a subsequence $\{\varphi _{1}(i)\}$ of $\{\varphi (i)\}$ such that $
	\{z_{\varphi _{1}(i)}^{1}\}$ converges to some $z_{1}\in Y.$ Hence
	\[
	\lim_{i\rightarrow \infty }d(z_{1},f^{\varphi
		_{1}(i)-a_{1}}(y))-d(f^{\varphi _{1}(i)-a_{1}}(y),y)=-d(z_{1},y)=-r.
	\]
	By induction, for every $n>1$, we can select on each geodesic segment
	$[y,f^{\varphi _{n-1}(i)-a_{n}}(y)]$ $\subset~Y$ (assuming that $\varphi
	_{n-1}(i)>a_{n}$ and $d(f^{\varphi _{n-1}(i)-a_{n}}(y),y)>r$) a point $
	z_{\varphi _{n-1}(i)}^{n}$ such that $d(z_{\varphi _{n-1}(i)}^{n},y)=r$ and
	a subsequence $\{\varphi _{n}(i)\}$ of $\{\varphi _{n-1}(i)\}$ such that $
	\{z_{\varphi _{n}(i)}^{n}\}$ converges to some $z_{n}\in Y.$ Hence
	\[
	\lim_{i\rightarrow \infty }d(z_{n},f^{\varphi
		_{n}(i)-a_{n}}(y))-d(f^{\varphi _{n}(i)-a_{n}}(y),y)=-d(z_{n},y)=-r.
	\]
	Since $Y$ is proper there exists a subsequence $\{z_{n_{k}}\}$ of $\{z_{n}\}$
	converging to some $z\in Y$ such that $d(z_{n_{k}},z)<\frac{1}{2}$ for each $
	k$. By diagonalization, there is a subsequence $\{\psi (i)\}_{i\in \mathbb{N}
	}$ of any $\{\varphi _{n}(i)\}_{i\in \mathbb{N}}$ such that
	\[
	d(z,f^{\psi (i)-a_{n_{k}}}(y))-d(f^{\psi (i)-a_{n_{k}}}(y),y)<-r+1
	\]
	for every $k\in \mathbb{N}$ and $i\geq k.$ Hence, for every $a_{n_{k}}$,
	\begin{eqnarray*}
		\liminf_{w\rightarrow \xi }d(f^{a_{n_{k}}}(z),w)-d(w,y) &\leq
		&\limsup_{i\rightarrow \infty }d(f^{a_{n_{k}}}(z),f^{\psi (i)}(y))-d(f^{\psi
			(i)}(y),y) \\
		&\leq &\limsup_{i\rightarrow \infty }d(z,f^{\psi
			(i)-a_{n_{k}}}(y))-d(f^{\psi (i)}(y),y) \\
		&\leq &\limsup_{i\rightarrow \infty }d(z,f^{\psi
			(i)-a_{n_{k}}}(y))-d(f^{\psi (i)-a_{n_{k}}}(y),y) \\
		&\leq &-r+1,
	\end{eqnarray*}
	and thus
	\[
	\liminf_{w\rightarrow \xi }d(f^{a_{n_{k}}}(z),w)-d(w,z_{0})\leq
	-r+1+d(z_{0},y)
	\]
	for every $z_{0}\in Y$. Since $r>0$ is arbitrary, this proves the first part
	of the lemma. To prove that $\xi \in \bigcap_{r\in \mathbb{R}}\overline{
		F_{z_{0}}(\xi ,r)}$ notice that (taking $a_{i}=\varphi (i)$) for every $
	z_{0}\in Y$ and $r\in \mathbb{R}$ there is $\tilde{z}\in Y$ and a
	subsequence $\{\varphi (i_{k})\}$ of $\{\varphi (i)\}$ such that $f^{\varphi
		(i_{k})}(\tilde{z})\in F_{z_{0}}(\xi ,r)$ for each $k$. Since
	\[
	d(f^{\varphi (i_{k})}(y),f^{\varphi (i_{k})}(\tilde{z}))\leq d(y,\tilde{z}),
	\]
	it follows from Axiom $4$ that $f^{\varphi (i_{k})}(\tilde{z})\rightarrow
	\xi .$ Thus $\xi \in \overline{F_{z_{0}}(\xi ,r)}$ and from arbitrariness of
	$r$, $\xi \in \bigcap_{r\in \mathbb{R}}\overline{F_{z_{0}}(\xi ,r)}.$
\end{proof}

We are now in a position to prove a general Wolff--Denjoy type theorem in
proper geodesic spaces.

\begin{theorem}
	\label{main}Let $(Y,d)$ satisfy Axiom $4$ and suppose that for every $\zeta
	\in \partial Y$ and $z_{0}\in Y$ the intersection of horoballs' closures $
	\bigcap_{r\in \mathbb{R}}\overline{F_{z_{0}}(\zeta ,r)}$ consists of a
	single point. If $f:Y\rightarrow Y$ is a nonexpansive mapping without
	bounded orbits, then there exists $\xi \in \partial Y$ such that the
	iterates $f^{n}$ of $f$ converge uniformly on bounded sets of $Y$ to $\xi .$
\end{theorem}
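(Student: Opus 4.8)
The plan is to take for $\xi$ the boundary point produced by Lemma \ref{lem1}. By the ``moreover'' clause of that lemma (which already uses Axiom $4$) we have $\xi\in\bigcap_{r\in\mathbb{R}}\overline{F_{z_{0}}(\xi,r)}$, and since by hypothesis this intersection is a single point, it must equal $\{\xi\}$ for every $z_{0}\in Y$. This is the only structural fact about $\xi$ I intend to exploit. First I would record that orbits escape to the boundary uniformly on bounded sets: fixing $z_{0}\in Y$, since $f$ has no bounded orbit, Ca\l ka's theorem (Theorem \ref{calka}) gives $d(f^{n}(y),z_{0})\to\infty$ for a fixed $y$, and nonexpansiveness upgrades this to $\inf_{x\in B}d(f^{n}(x),z_{0})\to\infty$ for any bounded $B$ via $d(f^{n}(x),z_{0})\ge d(f^{n}(y),z_{0})-d(x,y)$. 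Consequently, by Axiom $1$, every subsequential limit in $\overline{Y}$ of points of the form $f^{n}(x)$ with $x\in B$ must lie in $\partial Y$.

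Then I would argue by contradiction. Suppose $f^{n}$ does not converge to $\xi$ uniformly on some bounded set $B$. Extracting, I obtain $\varepsilon>0$, a strictly increasing sequence $\{n_{j}\}$ and points $x_{j}\in B$ with $\overline{d}(f^{n_{j}}(x_{j}),\xi)\ge\varepsilon$. Passing to further subsequences, $x_{j}\to x_{*}\in Y$ (the closed bounded set $\overline{B}\subset Y$ is compact because $Y$ is proper) and $f^{n_{j}}(x_{j})\to\eta$ in the compact space $\overline{Y}$, with $\overline{d}(\eta,\xi)\ge\varepsilon$, so $\eta\in\partial Y$ and $\eta\neq\xi$. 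Since $d(f^{n_{j}}(x_{j}),f^{n_{j}}(x_{*}))\le d(x_{j},x_{*})\to 0$ stays bounded, Axiom $4$ forces $f^{n_{j}}(x_{*})\to\eta$ as well. This reduces everything to a single orbit $\{f^{n_{j}}(x_{*})\}$ tending to a boundary point $\eta\neq\xi$.

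The core step connects $\eta$ to the horoballs of $\xi$ through Lemma \ref{lem1}. Fix $r\in\mathbb{R}$ and apply the lemma with pole $z_{0}$ and the sequence $\{n_{j}\}$: it produces a point $z\in Y$ and a subsequence $\{n_{j_{k}}\}$ with $f^{n_{j_{k}}}(z)\in F_{z_{0}}(\xi,r)$ for all $k$. Along this subsequence $f^{n_{j_{k}}}(x_{*})\to\eta$, and $d(f^{n_{j_{k}}}(z),f^{n_{j_{k}}}(x_{*}))\le d(z,x_{*})$ is bounded, so Axiom $4$ again yields $f^{n_{j_{k}}}(z)\to\eta$; hence $\eta\in\overline{F_{z_{0}}(\xi,r)}$. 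Since $r$ was arbitrary, $\eta\in\bigcap_{r\in\mathbb{R}}\overline{F_{z_{0}}(\xi,r)}=\{\xi\}$, contradicting $\eta\neq\xi$. This contradiction establishes that $f^{n}\to\xi$ uniformly on bounded sets.

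The point I expect to require the most care is not a single hard computation but the disciplined, repeated use of Axiom $4$: each transfer of convergence from one orbit to a neighbouring one rests on verifying that the relevant distance is bounded and that the target indeed lies in $\partial Y$, and the whole scheme deliberately sidesteps the small horoballs $E_{z_{0}}(\xi,r)$ and Theorem \ref{inv}, whose use would be blocked by the possible emptiness of $E_{z_{0}}(\xi,r)$ noted after that theorem. All the real content is packaged inside Lemma \ref{lem1}, which is engineered precisely to place forward iterates into the \emph{big} horoballs without any nonemptiness assumption; the single-point hypothesis on $\bigcap_{r}\overline{F_{z_{0}}(\xi,r)}$ is then exactly what upgrades ``$\eta$ lies in the intersection'' to ``$\eta=\xi$''.
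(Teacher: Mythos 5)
Your proposal is correct and follows essentially the same route as the paper: the boundary point $\xi$ comes from Lemma \ref{lem1}, membership of any subsequential limit $\eta$ in every $\overline{F_{z_{0}}(\xi,r)}$ is obtained by combining the lemma with Axiom $4$ (transferring convergence between orbits at bounded distance), and the single-point hypothesis then forces $\eta=\xi$. The only cosmetic difference is that you fold pointwise and uniform convergence into one contradiction argument, whereas the paper establishes pointwise convergence first and handles uniformity on bounded sets by a short separate application of Axiom $4$.
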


\begin{proof}
	By assumption, there exists $\xi \in \partial Y$ that satisfies the
	conclusion of Lemma \ref{lem1}. Hence $\bigcap_{r\in \mathbb{R}}\overline{
		F_{z_{0}}(\xi ,r)}=\{\xi \}.$ Fix $y\in Y$ and select a subsequence $
	\{f^{a_{n}}(y)\}$ of iterates of $f$ converging to some $\eta \in \partial Y$. 
	We show that $\eta =\xi .$ Fix $r\in \mathbb{R}$. From Lemma \ref{lem1}
	there exists a subsequence $\{a_{n_{k}}\}$ of $\{a_{n}\}$ and $z\in Y$ such
	that
	\begin{equation}
	f^{a_{n_{k}}}(z)\in F_{z_{0}}(\xi ,r)\mbox{ for }k\in \mathbb{N}.
	\label{bhl1}
	\end{equation}
	Since $d(f^{a_{n_{k}}}(y),f^{a_{n_{k}}}(z))\leq d(y,z),$ we obtain from
	Axiom $4$ that $\{f^{a_{n_{k}}}(z)\}$ converges to $\eta $ too. From the
	above and (\ref{bhl1}) we have $\eta \in \overline{F_{z_{0}}(\xi ,r)}$ for
	all $r\in \mathbb{R}$. Hence
	\[
	\eta \in \bigcap_{r\in \mathbb{R}}\overline{F_{z_{0}}(\xi ,r)}=\{\xi \}.
	\]
	Therefore $\eta =\xi $ and $f^{a_{n}}(y)\rightarrow \xi $ for any converging
	subsequence $\{f^{a_{n}}(y)\}$. It follows that $f^{n}(y)\rightarrow \xi $
	for each $y\in Y.$ The proof of uniform convergence on bounded sets is
	routine: suppose, on the contrary, that there exist an open neighbourhood $
	U\subset \overline{Y}$ of $\xi $, a bounded set $K\subset Y$ and a sequence $
	\{y_{n}\}\subset K$ such that $f^{n}(y_{n})\notin U$ for each $n$. Then
	\[
	d(f^{n}(y_{n}),f^{n}(y))\leq d(y_{n},y)\leq \diam K
	\]
	for any $y\in K$ and, since $f^{n}(y)\rightarrow \xi $, we deduce from Axiom
	4 that $f^{n}(y_{n})\rightarrow \xi \in \overline{Y}\setminus U$, a
	contradiction.
\end{proof}

\begin{corollary}
	Let $(Y,d)$ satisfy Axiom $4$ and suppose that for every $\zeta \in \partial
	Y$ and $z_{0}\in Y$ the intersection $\bigcap_{r\in \mathbb{R}}\overline{
		F_{z_{0}}(\zeta ,r)}$ consists of a single point. If $f:Y\rightarrow Y$ is a
	contractive mapping then there exists $\xi \in \overline{Y}$ such that the
	iterates $f^{n}$ of $f$ converge uniformly on bounded sets of $Y$ to $\xi $.
\end{corollary}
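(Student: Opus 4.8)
The plan is to split the argument according to the dichotomy furnished by Ca\l ka's theorem \ref{calka}: either $f$ has a bounded orbit or it does not. First I would record that for a nonexpansive map the boundedness of an orbit is independent of the base point, since for $x_{0},y\in Y$ and $z_{0}\in Y$ one has $d(f^{n}(y),z_{0})\leq d(y,x_{0})+d(f^{n}(x_{0}),z_{0})$. Combined with Theorem \ref{calka}, this shows that either every orbit of $f$ is bounded, or no orbit is bounded. In the latter case, since a contractive map is in particular nonexpansive, the hypotheses of Theorem \ref{main} are satisfied and we obtain directly a point $\xi \in \partial Y\subset \overline{Y}$ such that $f^{n}\to \xi$ uniformly on bounded sets.

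It remains to treat the case in which $f$ has a bounded orbit, so that all orbits are bounded. Fix $x_{0}$ and put $C=\overline{O(x_{0})}$; since $Y$ is proper and $C$ is closed and bounded, $C$ is compact, and $f(C)\subset C$ by continuity of $f$. The function $\phi (x)=d(x,f(x))$ is continuous and hence attains its minimum on $C$ at some $p$; if $f(p)\neq p$ then $\phi (f(p))=d(f(p),f^{2}(p))<d(p,f(p))=\phi (p)$, contradicting minimality, so $p$ is a fixed point of $f$, necessarily the unique one (two distinct fixed points $p,q$ would give $d(p,q)=d(f(p),f(q))<d(p,q)$). To establish pointwise convergence to $p$, note that for any $x$ the sequence $d(f^{n}(x),p)=d(f^{n}(x),f^{n}(p))$ is nonincreasing, hence converges to some $L\geq 0$; choosing a subsequence $f^{n_{k}}(x)\to q$ (possible since the orbit lies in the compact ball $\overline{B}(p,d(x,p))$) and passing to the limit in $d(f^{n_{k}+1}(x),p)=d(f(f^{n_{k}}(x)),f(p))$ yields $d(f(q),p)=L$, while $L>0$ would force $d(f(q),p)<d(q,p)=L$; thus $L=0$ and $f^{n}(x)\to p$.

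Finally I would upgrade pointwise convergence to uniform convergence on bounded sets by means of Dini's theorem. Given a bounded set $K$, set $R=\sup_{x\in K}d(x,p)<\infty$; the ball $\overline{B}(p,R)$ is compact and $f$-invariant (as $d(f(x),p)=d(f(x),f(p))\leq d(x,p)$), and the functions $g_{n}(x)=d(f^{n}(x),p)$ form a nonincreasing sequence of continuous functions on $\overline{B}(p,R)$ converging pointwise to $0$. By Dini's theorem this convergence is uniform on $\overline{B}(p,R)\supseteq K$, giving uniform convergence of $f^{n}$ to $\xi =p\in Y\subset \overline{Y}$ on $K$.

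I expect this last step to be the only delicate point: because $f$ is merely contractive rather than uniformly contracting, there is no contraction rate to exploit, so one cannot argue convergence quantitatively. The mechanism that saves the argument is the monotonicity of the $g_{n}$ together with the compactness of $\overline{B}(p,R)$, which is exactly the setting of Dini's theorem; the existence of the fixed point and pointwise convergence are then routine compactness arguments, while the unbounded-orbit case reduces entirely to Theorem \ref{main}.
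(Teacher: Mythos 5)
Your proof is correct, and it follows the paper's overall strategy — the same dichotomy via Ca\l ka's theorem, with the unbounded-orbit case handed off to Theorem \ref{main} — but the bounded-orbit half is handled by a genuinely different (and somewhat more complete) argument. The paper extracts a subsequential limit $z$ of a single orbit $\{f^{n}(y)\}$, observes that the displacement $d(f^{n}(y),f^{n+1}(y))$ decreases to $d(z,f(z))=d(f(z),f^{2}(z))$, concludes $f(z)=z$ by contractivity, and then gets $f^{n}(y)\to z$ from the monotonicity of $d(f^{n}(y),z)$; it stops at pointwise convergence. You instead produce the fixed point $p$ by minimizing the displacement function $x\mapsto d(x,f(x))$ on the compact, $f$-invariant orbit closure (an Edelstein-type argument), prove pointwise convergence by the standard subsequence-plus-monotonicity trick, and then — this is the real added value — upgrade to uniform convergence on bounded sets via Dini's theorem applied to the nonincreasing continuous functions $g_{n}(x)=d(f^{n}(x),p)$ on the compact ball $\bar{B}(p,R)$. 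The corollary as stated does claim uniform convergence on bounded sets, and the paper's proof of the bounded-orbit case does not address this (the Axiom~4 argument used in Theorem \ref{main} only applies when the limit lies on $\partial Y$), so your Dini step closes a point the paper leaves implicit. Your preliminary observation that boundedness of one orbit forces boundedness of all orbits (via nonexpansivity) is also a clean way to make the dichotomy base-point independent.
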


\begin{proof}
	If $f:Y\rightarrow Y$ has unbounded orbits then the conclusion follows
	directly from Theorem \ref{main}. Therefore, we suppose that $\{f^{n}(y)\}$
	is bounded for all $y\in Y$. Fix $y\in Y$. There exists a subsequence $
	\{f^{n_{i}}(y)\}$ of $\{f^{n}(y)\}$ converging to some $z\in Y$. Since $f$
	is contractive then the sequence $d_{n}=d(f^{n}(y),f^{n+1}(y))$, $
	n=1,2,\ldots $, is decreasing and hence it converges to some $\eta $ as $
	n\rightarrow \infty $. Note by continuity that $\eta
	=d(z,f(z))=d(f(z),f^{2}(z))$. If $z$ and $f(z)$ were distinct points we
	would have received a contradiction (with $f$ being contractive). Hence $
	f(z)=z$ and, since $\{d(f^{n}(y),z)\}$ is decreasing, we obtain that $
	f^{n}(y)\rightarrow z\in Y$ if $n\rightarrow \infty $ for every $y\in Y$.
\end{proof}

It is not difficult to see that Beardon's Axiom 2 implies the property that
any horoball's closure $\overline{F_{z_{0}}(\xi ,r)}$ meets the boundary $
\partial Y$ at only one point $\xi $. The following lemma, combined with
Theorem \ref{main} shows that in proper geodesic spaces this is more than
required to prove a Wolff--Denjoy type theorem.

\begin{lemma}
	\label{a3}Let $(Y,d)$ satisfy Axiom $3$, $z_{0}\in Y$, $\zeta \in \partial Y$
	and $r\in \mathbb{R}$. Then $\bigcap_{r\in \mathbb{R}}\overline{
		F_{z_{0}}(\zeta ,r)}=\{\zeta \}$.
\end{lemma}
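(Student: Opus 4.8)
The plan is to work with the horofunction $h(y):=\liminf_{w\to\zeta}\big(d(y,w)-d(w,z_{0})\big)$, so that $F_{z_{0}}(\zeta,r)=\{y\in Y:h(y)\le r\}$. Two facts about $h$ are immediate and will be used throughout: since $\big|d(y,w)-d(y',w)\big|\le d(y,y')$, taking the liminf gives $|h(y)-h(y')|\le d(y,y')$, so $h$ is $1$-Lipschitz (in particular continuous) and each $F_{z_{0}}(\zeta,r)$ is closed in $Y$; and $d(y,w)-d(w,z_{0})\ge -d(y,z_{0})$ yields the lower bound $h(y)\ge -d(y,z_{0})>-\infty$ for every $y\in Y$. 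Recall also that under the standing hypotheses $Y$ is proper and that Axioms $1$ and $3$ together imply Axiom $4$. I would prove the two inclusions separately, splitting the inclusion $\bigcap_{r}\overline{F_{z_{0}}(\zeta,r)}\subseteq\{\zeta\}$ according to whether a point of the intersection lies in $Y$ or in $\partial Y$.

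The heart of the argument is the boundary case, and this is where Axiom $3$ enters. Suppose $\eta\in\partial Y$ belongs to $\bigcap_{r}\overline{F_{z_{0}}(\zeta,r)}$. For each $m$, since $\eta\in\overline{F_{z_{0}}(\zeta,-m)}$ and $\overline{Y}$ is metrizable, I would pick $y_{m}\in F_{z_{0}}(\zeta,-m)$ with $\overline{d}(y_{m},\eta)<1/m$, so that $y_{m}\to\eta$. As $h(y_{m})\le -m$, the definition of the liminf lets me choose $w_{m}\in Y$ with $\overline{d}(w_{m},\zeta)<1/m$ and $d(y_{m},w_{m})-d(w_{m},z_{0})<-m+1$; thus $w_{m}\to\zeta$ and $d(y_{m},w_{m})-d(w_{m},z_{0})\to-\infty$. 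Now apply Axiom $3$ with the pair $x_{n}:=y_{m}$ (which converges to $\eta\in\partial Y$) and $y_{n}:=w_{m}$ and base point $z_{0}$: the hypothesis $d(x_{n},y_{n})-d(y_{n},z_{0})\to-\infty$ holds, so the conclusion gives $w_{m}\to\eta$. Since also $w_{m}\to\zeta$ and $\overline{Y}$ is Hausdorff, $\eta=\zeta$. The one subtlety to get right here is the assignment of roles in Axiom $3$: it is the far point $w_{m}\to\zeta$ that must play the variable whose limit we deduce, while the nearby $y_{m}\to\eta$ plays the anchoring sequence.

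For the interior case, suppose $p\in Y$ lies in $\bigcap_{r}\overline{F_{z_{0}}(\zeta,r)}$. Because $Y$ is open in $\overline{Y}$ with the relative topology, convergence to $p$ inside $\overline{Y}$ is the same as convergence in $Y$; since each $F_{z_{0}}(\zeta,r)$ is closed in $Y$ this gives $p\in\overline{F_{z_{0}}(\zeta,r)}\cap Y=F_{z_{0}}(\zeta,r)$, i.e. $h(p)\le r$ for every $r\in\mathbb{R}$, forcing $h(p)=-\infty$. This contradicts the lower bound $h(p)\ge -d(p,z_{0})$. Hence no interior point survives, and together with the previous paragraph $\bigcap_{r}\overline{F_{z_{0}}(\zeta,r)}\subseteq\{\zeta\}$.

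It remains to show that $\zeta$ actually belongs to the intersection. By Proposition \ref{bhnonempty} each big horoball $F_{z_{0}}(\zeta,-m)$ is nonempty, so I can pick $x_{m}\in F_{z_{0}}(\zeta,-m)$; the lower bound combined with $h(x_{m})\le -m$ forces $d(x_{m},z_{0})\ge m\to\infty$. Since $\overline{Y}$ is compact, a subsequence of $\{x_{m}\}$ converges to some $\eta\in\overline{Y}$, and $\eta\in\partial Y$ because $d(x_{m},z_{0})\to\infty$. For each fixed $r$ and all $m\ge -r$ we have $x_{m}\in F_{z_{0}}(\zeta,-m)\subseteq F_{z_{0}}(\zeta,r)$, so this subsequence exhibits $\eta\in\overline{F_{z_{0}}(\zeta,r)}$ for every $r$; the boundary case just proved then gives $\eta=\zeta$. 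Thus the subsequence converges to $\zeta$ while lying eventually in $F_{z_{0}}(\zeta,r)$, so $\zeta\in\overline{F_{z_{0}}(\zeta,r)}$ for every $r$, which completes the reverse inclusion and the proof.
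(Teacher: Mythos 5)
Your proof is correct and follows essentially the same route as the paper's: the decisive step --- extracting a diagonal sequence $w_m\to\zeta$ with $d(y_m,w_m)-d(w_m,z_0)\to-\infty$ and invoking Axiom $3$ with the sequence converging to $\eta$ as the anchor, so that $w_m\to\eta$ as well and hence $\eta=\zeta$ --- is exactly the paper's argument. You merely make explicit two points the paper leaves implicit, namely that the intersection contains no point of $Y$ itself and that $\zeta$ actually lies in it (the paper gets the latter from compactness of nested nonempty closed sets), both of which you handle correctly.
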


\begin{proof}
	Fix $z_{0}\in Y$ and $\zeta \in \partial Y$. If follows from Proposition
	\ref{bhnonempty} that a big horoball is nonempty and hence by compactness, $
	\bigcap_{r\in \mathbb{R}}\overline{F_{z_{0}}(\zeta ,r)}\neq \emptyset $.
	Suppose that $\eta \in \bigcap_{r\in \mathbb{R}}\overline{F_{z_{0}}(\zeta
		,r) }\subset \partial Y$ and consider a sequence $\{r_{n}\}$ of real numbers
	diverging to $-\infty $. Hence, for each $n\in \mathbb{N}$, $\eta \in
	\overline{F_{z_{0}}(\zeta ,r_{n})}$ and thus there exists a sequence $
	\{z_{i}^{r_{n}}\}_{i\in \mathbb{N}}$ contained in $F_{z_{0}}(\zeta ,r_{n})$
	such that $\overline{d}(z_{i}^{r_{n}},\eta )\rightarrow 0$ if $i\rightarrow
	\infty $. It follows that for every $n\in \mathbb{N}$ there exists $i_{n}\in
	\mathbb{N}$ such that $\overline{d}(z_{i_{n}}^{r_{n}},\eta )<\frac{1}{n}$
	and
	\[
	\liminf_{w\rightarrow \zeta }d(z_{i_{n}}^{r_{n}},w)-d(w,z_{0})\leq r_{n}.
	\]
	Let $\{w_{k}^{n}\}_{k\in \mathbb{N}}$ be a sequence converging to $\zeta \in
	\partial Y$ such that
	\[
	\lim_{k\rightarrow \infty
	}d(z_{i_{n}}^{r_{n}},w_{k}^{n})-d(w_{k}^{n},z_{0})\leq r_{n}.
	\]
	Then, for each $n\in \mathbb{N}$, there exists $w_{k_{n}}^{n}$ such that $
	\overline{d}(w_{k_{n}}^{n},\zeta )\leq \frac{1}{n}$ and
	\[
	d(z_{i_{n}}^{r_{n}},w_{k_{n}}^{n})-d(w_{k_{n}}^{n},z_{0})\leq r_{n}+\frac{1}{
		n}.
	\]
	Hence we obtain
	\[
	\lim_{n\rightarrow \infty
	}d(z_{i_{n}}^{r_{n}},w_{k_{n}}^{n})-d(w_{k_{n}}^{n},z_{0})=-\infty .
	\]
	By Axiom $3$, $\overline{d}(w_{k_{n}}^{n},\eta )\rightarrow 0$ but $
	\overline{d}(w_{k_{n}}^{n},\zeta )\rightarrow 0$, too. This means that $\eta
	=\zeta $ and therefore, $\bigcap_{r\in \mathbb{R}}\overline{F_{z_{0}}(\zeta
		,r)}=\{\zeta \}.$
\end{proof}

Thus Axiom 3 can be regarded as an abstract formulation of the property that
the intersection of closures of horoballs consists of a single point.
Theorem \ref{main} and Lemma \ref{a3} immediately lead to the following
results.

\begin{theorem}
	\label{main2}Let $(Y,d)$ satisfy Axiom $3$. If $f:Y\rightarrow Y$ is a
	nonexpansive mapping without bounded orbits, then there exists $\xi \in
	\partial Y$ such that the iterates $f^{n}$ of $f$ converge uniformly on
	bounded sets of $Y$ to $\xi .$
\end{theorem}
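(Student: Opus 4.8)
The plan is to deduce this theorem directly from Theorem \ref{main} combined with Lemma \ref{a3}, since essentially all of the substantive work has already been done in the preceding results. Recall that throughout this section $(Y,d)$ is, by standing assumption, a complete locally compact geodesic space satisfying Axiom $1$, so by the Hopf--Rinow theorem $Y$ is proper; this is the ambient setting in which both Theorem \ref{main} and Lemma \ref{a3} operate.

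First I would verify the two hypotheses of Theorem \ref{main}. That theorem demands, on the one hand, that $Y$ satisfy Axiom $4$, and on the other hand that for every $\zeta\in\partial Y$ and every $z_0\in Y$ the intersection $\bigcap_{r\in\mathbb{R}}\overline{F_{z_0}(\zeta,r)}$ reduce to a single point. For the first requirement I would appeal to the observation recorded in Section $2$ that Axioms $1$ and $3$ together imply Axiom $4$; since Axiom $1$ holds by the standing assumption and Axiom $3$ holds by hypothesis, $Y$ indeed satisfies Axiom $4$. For the second requirement I would invoke Lemma \ref{a3}, which under Axiom $3$ gives precisely $\bigcap_{r\in\mathbb{R}}\overline{F_{z_0}(\zeta,r)}=\{\zeta\}$ for every $\zeta\in\partial Y$ and $z_0\in Y$, so in particular this intersection is a singleton.

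With both hypotheses established, I would apply Theorem \ref{main} to the nonexpansive map $f$, which by assumption possesses no bounded orbit. This yields a point $\xi\in\partial Y$ such that the iterates $f^n$ converge uniformly on bounded sets of $Y$ to $\xi$, which is exactly the assertion of the theorem.

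I do not expect any genuine obstacle here: the theorem is in effect a clean corollary, with all of its analytic content packaged into Theorem \ref{main} and Lemma \ref{a3}. The only point requiring care is the bookkeeping of the axiom implications, namely checking that Axiom $3$, in the presence of the ambient Axiom $1$, upgrades to Axiom $4$, and confirming that the singleton conclusion of Lemma \ref{a3} is literally the geometric hypothesis that Theorem \ref{main} requires. Both verifications are immediate, so the statement follows at once.
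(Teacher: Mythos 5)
Your proposal is correct and follows exactly the paper's route: the paper states that Theorem \ref{main} and Lemma \ref{a3} ``immediately lead'' to Theorem \ref{main2}, relying on the standing assumption of Axiom $1$ in Section 3 and the observation from Section 2 that Axioms $1$ and $3$ imply Axiom $4$. Your bookkeeping of these implications is precisely the intended argument.
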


\begin{corollary}
	Let $(Y,d)$ satisfy Axiom $3$. If $f:Y\rightarrow Y$ is a contractive
	mapping then there exists $\xi \in \overline{Y}$ such that the iterates $
	f^{n}$ of $f$ converge uniformly on bounded sets of $Y$ to $\xi $.
\end{corollary}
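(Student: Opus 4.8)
The plan is to follow the proof of the corollary to Theorem \ref{main} verbatim, with Theorem \ref{main2} playing the role formerly played by Theorem \ref{main}. I would begin by invoking the dichotomy recorded after Ca\l ka's theorem \ref{calka}: since $f$ is nonexpansive and $Y$ is proper (a consequence of the standing Axiom 1 in this section), the orbits of $f$ are either all bounded or all unbounded.

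If $f$ has unbounded orbits, then, as every contractive map is in particular nonexpansive, $f$ is a nonexpansive map without bounded orbits. Theorem \ref{main2} then applies directly under Axiom 3 and yields a point $\xi \in \partial Y$ to which the iterates $f^{n}$ converge uniformly on bounded sets; in this case no further work is needed.

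In the complementary case every orbit of $f$ is bounded, and I would run the classical contraction argument. Fixing $y \in Y$, properness gives a subsequence $f^{n_{i}}(y) \to z \in Y$. The distances $d_{n} = d(f^{n}(y), f^{n+1}(y))$ form a nonincreasing sequence, hence converge to some $\eta$; passing to the limit along $\{n_{i}\}$ and using continuity of $d$ and $f$ yields $\eta = d(z, f(z)) = d(f(z), f^{2}(z))$. Were $z$ and $f(z)$ distinct, contractivity would force $d(f(z), f^{2}(z)) < d(z, f(z))$, a contradiction; hence $z$ is a fixed point of $f$. Since $d(f^{n}(y), z)$ is then nonincreasing and has a subsequence tending to $0$, the whole sequence tends to $0$, so $f^{n}(y) \to z =: \xi$. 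Uniform convergence on bounded sets would follow by a Dini-type argument: on the (compact) closure of a bounded set the continuous functions $y \mapsto d(f^{n}(y), z)$ decrease pointwise to $0$.

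I expect no essential obstacle, since the entire content is already packaged in Theorem \ref{main2} for the unbounded case and in the standard fixed-point-of-a-contraction argument for the bounded case. The only point deserving a little care is the passage from a single convergent subsequence to convergence of the full orbit once $z$ is known to be fixed, together with the upgrade from pointwise to uniform convergence on bounded sets in the fixed-point case; here properness, i.e. compactness of closed bounded sets, is precisely the hypothesis that Dini's theorem requires.
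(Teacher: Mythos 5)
Your proposal is correct and follows essentially the same route as the paper: the paper states this corollary without a separate proof, intending exactly the argument it gives for the analogous corollary after Theorem \ref{main} (unbounded orbits handled by the Wolff--Denjoy theorem, bounded orbits by the classical Edelstein-type argument producing a fixed point $z$ with $d(f^{n}(y),z)$ nonincreasing), with Theorem \ref{main2} in place of Theorem \ref{main}. Your Dini-type justification of uniform convergence on bounded sets in the fixed-point case is a small completeness addition the paper leaves implicit, and it is valid since properness makes closures of bounded sets compact.
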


\section{Strictly convex domains}

Let $V$ be a finite-dimensional (real or complex) vector space, $D$ a
bounded open subset of $V$ and $\partial D=\overline{D}\setminus D$, where $
\overline{D}$ denotes the closure of $D$ in the norm topology. To simplify
notation, we write in this section
\[
\lbrack z,w]=\{sz+(1-s)w:s\in \lbrack 0,1]\}\mbox{ \ and \ }
(z,w)=\{sz+(1-s)w:s\in (0,1)\}
\]
for the closed and open segments connecting $z$ and $w$ in $D$. We begin
with the following simple lemma (see, e.g., \cite[Lemma 1]{AbRa}).

\begin{lemma}
	\label{opensegment}Let $D\subset V$ be a convex domain. Then
	
	\begin{enumerate}
		\item[(i)] $(z,w)\subset D$ for all $z\in D$ and $w\in \partial D$
		
		\item[(ii)] if $z,w\in \partial D$, then either $(z,w)\subset \partial D$ or
		$(z,w)\subset D.$
	\end{enumerate}
\end{lemma}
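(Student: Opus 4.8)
The plan is to derive both parts from the openness of $D$ together with the convexity of $\overline{D}$, using the elementary fact that the closure of a convex set is convex.

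For part (i), I would fix $z\in D$, $w\in\partial D$ and an arbitrary point $p=sz+(1-s)w$ with $s\in(0,1)$, and show that a whole small ball around $p$ lies in $D$. First I would choose $\rho>0$ with $B(z,\rho)\subset D$, which is possible since $D$ is open. Because $w\in\overline{D}$, I can pick $w'\in D$ as close to $w$ as I like. By convexity the set $s\,B(z,\rho)+(1-s)w'=B(sz+(1-s)w',\,s\rho)$ is contained in $D$. Since $\|p-(sz+(1-s)w')\|=(1-s)\|w-w'\|$, choosing $w'$ so that $(1-s)\|w-w'\|<s\rho$ places $p$ inside this ball, whence $p\in D$. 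As $p$ was an arbitrary point of the open segment, this yields $(z,w)\subset D$.

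For part (ii), I would first note that $(z,w)\subset\overline{D}$, because $\overline{D}$ is convex and contains both $z$ and $w$; hence every point of $(z,w)$ lies either in $D$ or in $\partial D$. The dichotomy then reduces to showing that if a \emph{single} point of the open segment lies in $D$, then the whole open segment does. So suppose $p_0=s_0z+(1-s_0)w\in D$ for some $s_0\in(0,1)$, and let $p=sz+(1-s)w$ be any other point of $(z,w)$. If $s>s_0$, then $p$ lies strictly between $p_0\in D$ and $z\in\partial D$, so $p\in(p_0,z)\subset D$ by part (i); if $s<s_0$, then $p$ lies strictly between $p_0\in D$ and $w\in\partial D$, so $p\in(p_0,w)\subset D$, again by part (i). Consequently either every point of $(z,w)$ is in $D$, giving $(z,w)\subset D$, or none is, giving $(z,w)\subset\partial D$.

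I do not expect a genuine obstacle here: the only steps requiring a little care are the quantitative estimate in part (i), where the displacement $(1-s)\|w-w'\|$ must be matched against the available radius $s\rho$, and the observation in part (ii) that membership of one interior point propagates along the segment precisely by reapplying part (i) with the two boundary endpoints $z$ and $w$.
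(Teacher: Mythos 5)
Your proof is correct and takes essentially the same approach as the paper: part (i) is the same perturbation argument (the paper writes $sz+(1-s)w=sz'+(1-s)w'$ with $z'=z+\frac{1-s}{s}(w-w')$, which is exactly your ball-translation estimate in algebraic form, with the same threshold $\|w-w'\|<\frac{s}{1-s}\rho$), and part (ii) is the same reduction to part (i) applied to the subsegments determined by an interior point of $(z,w)$. The only cosmetic difference is that you make explicit the observation that $(z,w)\subset\overline{D}$, which the paper's dichotomy uses implicitly.
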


\begin{proof}
	~
	
	\begin{enumerate}
		\item[(i)] Let $z\in D,w\in \partial D$ and pick $s\in (0,1).$ We show that $
		sz+(1-s)w\in D.$ Since $D$ is open and $z\in D$, there exists a ball $
		B(z,r)\subset D.$ Let $r^{\prime }<r\frac{s}{1-s}$ and select $w^{\prime
		}\in D$ such that $\left\Vert w-w^{\prime }\right\Vert <r^{\prime }.$ Then
		\begin{equation}
		sz+(1-s)w=sz^{\prime }+(1-s)w^{\prime },  \label{comb}
		\end{equation}
		where
		\[
		z^{\prime }=z+\frac{1-s}{s}(w-w^{\prime }).
		\]
		Hence $\left\Vert z-z^{\prime }\right\Vert <\frac{1-s}{s}r^{\prime }<r$ and
		therefore $z^{\prime }\in B(z,r)\subset D.$ It folows from (\ref{comb}) and
		convexity of $D$ that $sw+(1-s)z\in D.$
		
		\item[(ii)] Suppose that $(z,w)\varsubsetneq \partial D$ so there exists $
		y\in (z,w)\cap D$. Then from (i) $(z,y)\subset D$ and $(y,w)\subset D$, and
		consequently $(z,w)\subset D.$
	\end{enumerate}
\end{proof}

Recall that $D\subset V$ is \textit{strictly convex} if for any $z,w\in \overline{D}$
the open segment $(z,w)$ lies in $D$.

The objective of this section is to apply the general results of Section 3
to the case of a bounded strictly convex domain $D\subset V$. In what
follows, we will always assume that $(D,d)$ is a metric space whose topology
coincides with the norm topology. Our next lemma, although formulated for
subsets of a finite dimensional space, is valid for any proper space.

\begin{lemma}
	\label{A1}Suppose that $D$ is a bounded domain of $V$ and $(D,d)$ is a
	complete geodesic space. Then $(D,d)$ satisfies Axiom $1.$
\end{lemma}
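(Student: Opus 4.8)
I need to verify that $(D,d)$ satisfies Axiom 1, which has two parts: first, that $D$ is an open dense subset of a compact metric space $(\overline{D},\overline{d})$ whose relative topology agrees with the topology of $D$; second, that whenever $\{x_n\}\subset D$ converges to a boundary point $\xi\in\partial D$, we have $d(x_n,w)\to\infty$ for every fixed $w\in D$.

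The first part is essentially geometric and should be routine. Since $D$ is a bounded open subset of the finite-dimensional space $V$, its norm-closure $\overline{D}$ is compact (closed and bounded in finite dimensions), and $D$ is open and dense in $\overline{D}$ by construction. Because we assume throughout this section that the topology of the metric space $(D,d)$ coincides with the norm topology, the relative (norm) topology of $D$ inside $\overline{D}$ matches the topology induced by $d$. I would take $\overline{d}$ to be the metric inherited from the norm on $\overline{D}$; compactness of $\overline{D}$ and the agreement of topologies then give the compactification required by Axiom 1. The boundary is $\partial D=\overline{D}\setminus D$ as stipulated.

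The substantive content is the divergence statement, and this is where I expect the main obstacle. Suppose $x_n\to\xi\in\partial D$ in norm but, toward a contradiction, $d(x_n,w)$ does not tend to infinity; then along a subsequence $d(x_{n_k},w)\le M$ for some constant $M$. The idea is to exploit that $(D,d)$ is a \emph{complete geodesic space}: the closed ball $\bar{B}_d(w,M)=\{y\in D:d(y,w)\le M\}$ should be shown to be compact in the norm topology, forcing the limit $\xi$ to lie in $D$ rather than on $\partial D$, a contradiction. To establish this compactness I would argue that $\bar{B}_d(w,M)$ is a closed subset of $D$ that stays away from $\partial D$: for each $y$ in this ball, join $w$ to $y$ by a $d$-geodesic (this uses the geodesic hypothesis) lying in $D$ of $d$-length at most $M$. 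Completeness of $(D,d)$ guarantees Cauchy sequences in $d$ converge \emph{within} $D$, so a norm-limit on $\partial D$ of points with bounded $d$-distance from $w$ would have to already be a limit in the $d$-metric, landing in $D$ and contradicting $\xi\in\partial D$.

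The delicate step is ruling out that a $d$-bounded sequence can converge in norm to a boundary point. Concretely, if $x_{n_k}\to\xi\in\partial D$ in norm while $d(x_{n_k},w)\le M$, completeness of $(D,d)$ means it suffices to show $\{x_{n_k}\}$ is $d$-Cauchy, or else to extract a $d$-convergent subsequence with limit forced into $D$; since the $d$-topology equals the norm topology, the $d$-limit must coincide with the norm-limit $\xi$, placing $\xi\in D$ and contradicting $\xi\in\partial D$. The one point requiring care is that boundedness in $d$ together with completeness does not automatically give a convergent subsequence unless bounded $d$-balls are compact, so I would instead proceed by contradiction through a geodesic argument: the geodesic from $w$ to $x_{n_k}$ remains in the closed $d$-ball, and its norm-accumulation at $\xi$ would produce, via completeness of $(D,d)$, an interior $d$-limit equal to $\xi$. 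This forces $\xi\in D$, the desired contradiction, completing the verification of Axiom 1.
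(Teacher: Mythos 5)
You have the right target --- the crux is showing that a $d$-bounded subsequence of $\{x_n\}$ has a $d$-convergent subsequence with limit in $D$, which then contradicts norm-convergence to $\xi\in\partial D$ via the coincidence of topologies --- but your proposal never actually secures this step, and the substitute you offer does not work. You write that ``completeness of $(D,d)$ guarantees Cauchy sequences in $d$ converge within $D$, so a norm-limit on $\partial D$ of points with bounded $d$-distance from $w$ would have to already be a limit in the $d$-metric.'' This is a non sequitur: norm-convergence to a point $\xi\notin D$ gives no information whatsoever about $d$-Cauchyness of the sequence (the topologies agree only on $D$, and the putative limit is not in $D$), and completeness is of no use until you have produced a Cauchy sequence. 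You even flag the problem yourself --- ``boundedness in $d$ together with completeness does not automatically give a convergent subsequence unless bounded $d$-balls are compact'' --- and then retreat to ``a geodesic argument'' that is never made precise; extracting a limit geodesic would itself require some compactness (the geodesics are $1$-Lipschitz for $d$, not for the norm, so Arzel\`a--Ascoli in the norm topology is not available either).

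The missing ingredient is properness of $(D,d)$, and the paper obtains it in one line: since the $d$-topology coincides with the norm topology and $D$ is open in the finite-dimensional space $V$, the space $(D,d)$ is locally compact; being also complete and geodesic, it is proper by the Hopf--Rinow theorem, so closed $d$-balls are compact. With that, the $d$-bounded subsequence $\{x_{n_k}\}$ has a further subsequence converging in $(D,d)$ to some $x_0\in D$, hence converging to $x_0$ in norm, forcing $\xi=x_0\in D$ --- contradicting $\xi\in\partial D$. Your outline can be repaired by inserting exactly this local-compactness-plus-Hopf--Rinow step in place of the appeal to completeness alone.
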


\begin{proof}
	Suppose that a sequence $\{x_{n}\}$ converges to $\xi \in \partial D$. We
	show that $d(x_{n},y)\rightarrow \infty $ for every $y\in D.$ On the
	contrary, suppose that there exists a subsequence $\{x_{n_{k}}\}$ such that $
	d(x_{n_{k}},y)\leq c$ for some $y\in D$ and $c>0.$ Since $(D,d)$ is locally
	compact, it follows from the Hopf--Rinow theorem that $D$ is proper and thus
	there exists a subsequence $\{x_{n_{k_{l}}}\}$ that converges in $(D,d)$ to
	some $x_{0}\in D.$ But the topology of $(D,d)$ coincides with the norm
	topology and hence $\{x_{n_{k_{l}}}\}$ tends to $x_{0}\in D$ in norm, a
	contradiction (since $D$ is open and hence $\xi =x_{0}\notin \partial D$).
\end{proof}

The following proposition is valid for any metric $d$ satisfying the
following condition--equivalent to the convexity of balls in $D$:
\begin{equation}
d(sx+(1-s)y,z)\leq \max \{d(x,z),d(y,z)\}\mbox{ for all }x,y,z\in D\mbox{
	and }s\in \lbrack 0,1].
\tag{C}
\label{C}
\end{equation}

\begin{proposition}
	\label{A3star}Suppose that $D$ is a convex domain of $V$ and $(D,d)$
	satisfies condition (\ref{C}). If $\{x_{n}\}$ and $\{y_{n}\}$ are sequences
	in $D\,$converging to $\xi $ and $\eta $, respectively, in $\partial D$ and
	if for some $w\in D,$
	\[
	d(x_{n},y_{n})-d(y_{n},w)\rightarrow -\infty ,
	\]
	then $[\xi ,\eta ]\subset \partial D.$
\end{proposition}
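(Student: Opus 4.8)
The plan is to reduce the statement to showing that the open segment $(\xi,\eta)$ meets $D$ in no point, and then to exploit condition (\ref{C}) to contradict the hypothesis $d(x_{n},y_{n})-d(y_{n},w)\to-\infty$ whenever such an interior point is assumed to exist. First I would dispose of the trivial case $\xi=\eta$, for which $[\xi,\eta]=\{\xi\}\subset\partial D$. Assuming $\xi\neq\eta$, I note that $\overline{D}$ is convex as the norm closure of the convex set $D$, so $[\xi,\eta]\subset\overline{D}$; hence it suffices to prove $(\xi,\eta)\cap D=\emptyset$, since this forces $(\xi,\eta)\subset\overline{D}\setminus D=\partial D$ and, adjoining the endpoints $\xi,\eta\in\partial D$, gives $[\xi,\eta]\subset\partial D$. (One may equally invoke Lemma \ref{opensegment}(ii), whose dichotomy reduces matters to excluding an interior point.)

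I would then argue by contradiction. Suppose some $p=s\xi+(1-s)\eta$ with $s\in(0,1)$ lies in $D$, and set $p_{n}=sx_{n}+(1-s)y_{n}\in D$. Since the vector operations are norm-continuous, $p_{n}\to p$ in norm, and because the topology of $(D,d)$ coincides with the norm topology this yields $p_{n}\to p$ in $(D,d)$; consequently $d(p_{n},w)\to d(p,w)<\infty$, so $\{d(p_{n},w)\}$ is bounded. The key estimate comes from applying (\ref{C}) with third slot $z=y_{n}$, giving $d(p_{n},y_{n})=d(sx_{n}+(1-s)y_{n},y_{n})\le\max\{d(x_{n},y_{n}),0\}=d(x_{n},y_{n})$. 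Combining this with the triangle inequality $d(y_{n},w)\le d(y_{n},p_{n})+d(p_{n},w)$ produces
\[
d(x_{n},y_{n})-d(y_{n},w)\ge -d(p_{n},w).
\]
As the right-hand side is bounded below, $d(x_{n},y_{n})-d(y_{n},w)$ cannot tend to $-\infty$, contradicting the hypothesis. Hence no such interior point exists, and the proposition follows.

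The two substantive steps are these. The decisive manoeuvre is to apply (\ref{C}) with the third argument equal to $y_{n}$ itself, so that convexity of balls collapses into the one-sided bound $d(p_{n},y_{n})\le d(x_{n},y_{n})$ linking the convex combination back to the distance $d(x_{n},y_{n})$. The step where care is genuinely needed is the boundedness of $\{d(p_{n},w)\}$, and this is precisely where the assumption $p\in D$ (rather than $p\in\partial D$) is indispensable: by Axiom $1$ a sequence approaching a boundary point would force its distance to $w$ to infinity, rendering the estimate vacuous, and it is the coincidence of the metric and norm topologies that guarantees the convergence $d(p_{n},w)\to d(p,w)$. Everything else I expect to be routine.
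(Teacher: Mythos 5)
Your proof is correct and is essentially identical to the paper's: both argue by contradiction from an interior point $p=s\xi+(1-s)\eta\in D$, use condition (\ref{C}) with third argument $y_n$ to get $d(sx_n+(1-s)y_n,y_n)\le d(x_n,y_n)$, and combine the triangle inequality with the norm-to-$d$ topology coincidence to contradict $d(x_n,y_n)-d(y_n,w)\to-\infty$. The only cosmetic difference is that you rearrange the inequality as a lower bound by $-d(p_n,w)$ while the paper derives $d(p_n,w)\to+\infty$ directly; these are the same estimate.
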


\begin{proof}
	Let $x_{n}\rightarrow \xi ,\ y_{n}\rightarrow \eta ,\ \xi ,\eta \in \partial D$
	and
	\[
	d(x_{n},y_{n})-d(y_{n},w)\rightarrow -\infty
	\]
	for some $w\in D.$ On the contrary, suppose that $s\xi +(1-s)\eta \in D$ for
	some $s\in (0,1).$ Then
	\[
	d(sx_{n}+(1-s)y_{n},w)\geq d(w,y_{n})-d(y_{n},sx_{n}+(1-s)y_{n})\geq
	d(w,y_{n})-d(y_{n},x_{n})\rightarrow +\infty .
	\]
	Since
	\[
	||sx_{n}+(1-s)y_{n}-(s\xi +(1-s)\eta )||\leq s||x_{n}-\xi
	||+(1-s)||y_{n}-\eta ||\rightarrow 0
	\]
	and topologies of $(D,d)$ and $(\overline{D},\left\Vert \cdot \right\Vert )$
	coincide on $D$, we have
	\[
	d(s\xi +(1-s)\eta ,w)\rightarrow +\infty,
	\]
	a contradiction.
\end{proof}

The following lemma is an immediate consequence of Proposition \ref{A3star}.

\begin{lemma}
	\label{A3}Suppose that $D$ is a bounded strictly convex domain of $V$ and $
	(D,d)$ satisfies condition (\ref{C}). If $\{x_{n}\}$ and $\{y_{n}\}$ are
	sequences in $Y$, $x_{n}\rightarrow \xi \in \partial D$ and if for some $
	w\in D,$
	\[
	d(x_{n},y_{n})-d(y_{n},w)\rightarrow -\infty ,
	\]
	then $y_{n}\rightarrow \xi .$ In other words $(D,d)$ satisfies Axiom $3$.
\end{lemma}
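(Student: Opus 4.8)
The plan is to reduce the statement to Proposition \ref{A3star} by a subsequence argument, with strict convexity used to pin down the common limit. I would not need any new input beyond Proposition \ref{A3star}, the coincidence of the metric and norm topologies on $D$ (a standing assumption of this section), and the compactness of the norm closure $\overline{D}$, which holds since $D$ is a bounded subset of the finite-dimensional space $V$.

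First I would record the elementary consequence of the hypothesis: since $d(x_n,y_n)\ge 0$, the assumption $d(x_n,y_n)-d(y_n,w)\to-\infty$ forces $d(y_n,w)\to+\infty$. Next I take an arbitrary subsequence of $\{y_n\}$ and, using compactness of $\overline{D}$, extract a further subsequence $\{y_{n_k}\}$ converging in norm to some $\eta\in\overline{D}$. I then claim $\eta\in\partial D$: if instead $\eta\in D$, then since the topology of $(D,d)$ coincides with the norm topology we would have $y_{n_k}\to\eta$ in $(D,d)$ and hence $d(y_{n_k},w)\to d(\eta,w)<\infty$, contradicting $d(y_n,w)\to\infty$.

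Now $\{x_{n_k}\}$ still converges to $\xi\in\partial D$, $\{y_{n_k}\}$ converges to $\eta\in\partial D$, and $d(x_{n_k},y_{n_k})-d(y_{n_k},w)\to-\infty$, so Proposition \ref{A3star} applies and yields $[\xi,\eta]\subset\partial D$. On the other hand, strict convexity of $D$ gives $(\xi,\eta)\subset D$ whenever $\xi\ne\eta$; since $\partial D\cap D=\emptyset$, this is impossible unless $(\xi,\eta)=\emptyset$, that is, $\eta=\xi$. Thus every norm-convergent subsequence of $\{y_n\}$ has limit $\xi$, and by compactness of $\overline{D}$ this forces $y_n\to\xi$, which is exactly the assertion that $(D,d)$ satisfies Axiom $3$.

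The argument is essentially mechanical once Proposition \ref{A3star} is available; the only point requiring care is the third step, namely verifying that subsequential limits of $\{y_n\}$ lie on $\partial D$ rather than inside $D$, which rests on the coincidence of the metric and norm topologies together with $d(y_n,w)\to\infty$. Strict convexity then does the decisive work, collapsing the boundary segment $[\xi,\eta]$ to the single point $\xi$, and the passage from subsequential limits to full convergence is the standard compactness principle.
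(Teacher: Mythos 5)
Your proof is correct and follows exactly the route the paper intends: the paper states Lemma \ref{A3} as an ``immediate consequence'' of Proposition \ref{A3star} without writing out the details, and your argument (pass to a norm-convergent subsequence, use $d(y_n,w)\to\infty$ and the coincidence of topologies to place the limit on $\partial D$, invoke Proposition \ref{A3star}, and let strict convexity force $\eta=\xi$ before applying the standard subsequence principle) is precisely the missing verification. No gaps.
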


Combining Theorem \ref{main2}, Lemmas \ref{A1}, \ref{A3} and a classical
argument from metric fixed point theory we obtain the main result of this
section. We point out that there is no need to assume any hyperbolic
property of a metric.

\begin{theorem}
	\label{main_Rn}Suppose that $D$ is a bounded strictly convex domain of $V$
	and $(D,d)$ is a complete geodesic space satisfying condition (\ref{C})
	whose topology coincides with the norm topology. If $f:D\rightarrow D$ is a
	nonexpansive mapping without fixed points, then there exists $\xi \in
	\partial D$ such that the iterates $f^{n}$ of $f$ converge uniformly on
	bounded sets of $D$ to $\xi .$
\end{theorem}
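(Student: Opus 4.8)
The plan is to fit $(D,d)$ into the framework of Section~3 and then eliminate the possibility of a bounded orbit by a fixed-point argument, so that Theorem~\ref{main2} can be invoked. First I would record that the hypotheses supply all the needed structure: since $D$ is a bounded domain and $(D,d)$ is a complete geodesic space, Lemma~\ref{A1} gives Axiom~$1$, and in particular $(D,d)$ is proper (being a $d$-bounded subset of the finite-dimensional $V$ whose $d$-topology is the norm topology); since $D$ is strictly convex and $(D,d)$ satisfies condition~(\ref{C}), Lemma~\ref{A3} gives Axiom~$3$. Thus $(D,d)$ meets the standing assumptions of Section~3. The genuine gap is that Theorem~\ref{main2} demands \emph{no bounded orbit}, whereas we are only given \emph{no fixed point}. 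I would therefore split into two cases: if $f$ has no bounded orbit, Theorem~\ref{main2} applies verbatim and produces the point $\xi\in\partial D$ with $f^n\to\xi$ uniformly on bounded sets; it then remains to exclude the case where some orbit $\{f^n(x_0)\}$ is bounded, and this is where the classical metric fixed point argument enters.

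Assuming $\{f^n(x_0)\}$ bounded, I would use the asymptotic-center functional $r(y)=\limsup_{n\to\infty} d(f^n(x_0),y)$. The steps to verify are: (i) $r$ is $1$-Lipschitz, hence $d$-continuous; (ii) for $y$ with $r(y)\le t$ one has $d(y,x_0)\le d(y,f^n(x_0))+d(f^n(x_0),x_0)\le t+1+M$ for large $n$, where $M=\sup_n d(f^n(x_0),x_0)<\infty$, so each sublevel set $\{y\in D:r(y)\le t\}$ is $d$-bounded and closed, hence compact by properness, and $r$ attains its minimum $r_0$ on $D$; (iii) the minimizing set $C=\{y\in D:r(y)=r_0\}$ is convex, because condition~(\ref{C}) gives $r(sy_1+(1-s)y_2)\le\max\{r(y_1),r(y_2)\}$ for all $s\in[0,1]$, so a convex combination of minimizers is again a minimizer; and (iv) $f(C)\subseteq C$, since nonexpansiveness yields $r(f(y))\le\limsup_{n\to\infty} d(f^{n-1}(x_0),y)=r(y)$. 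Then $C$ is a nonempty compact convex subset of the finite-dimensional space $V$ and $f|_C$ is norm-continuous, so a finite-dimensional fixed point theorem (Brouwer, or the standard contraction-approximation $y\mapsto(1-t)f(y)+tp$ with $p\in C$, letting $t\to 0$) furnishes a fixed point of $f$ in $C$. This contradicts the hypothesis, so no orbit can be bounded and the proof concludes via Theorem~\ref{main2}.

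The main obstacle is precisely the bounded-orbit case, i.e.\ verifying (ii) and (iii): one must ensure that properness (coming from Axiom~$1$ via Lemma~\ref{A1}) forces the sublevel sets of $r$ to stay compactly inside $D$ so that the minimum is attained \emph{in} $D$, and that condition~(\ref{C}), used for the full range $s\in[0,1]$, delivers genuine linear convexity of the minimizing set $C$ so that a finite-dimensional fixed point theorem is applicable. It is worth noting that strict convexity of $D$ itself plays no role in this part of the argument; it is used only through Lemma~\ref{A3} to secure Axiom~$3$, which is consistent with the remark that no hyperbolic property of the metric is required.
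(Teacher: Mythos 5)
Your proposal is correct and follows essentially the same route as the paper: Lemmas \ref{A1} and \ref{A3} supply Axioms $1$ and $3$, the unbounded-orbit case is handled by Theorem \ref{main2}, and a bounded orbit is ruled out via the asymptotic center, which is nonempty and compact by properness, convex by condition (\ref{C}), $f$-invariant, and hence contains a fixed point by Brouwer's theorem. Your write-up merely spells out the asymptotic-center verification in more detail than the paper does.
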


\begin{proof}
	It follows from Lemma \ref{A3} that $(D,d)$ satisfies Axiom $3$. If $
	f:D\rightarrow D$ has unbounded orbits then the conclusion follows directly
	from Lemma \ref{A1} and Theorem \ref{main2}. Therefore, we can assume that $
	\{f^{n}(y)\}$ is bounded for some (and hence for any) $y\in D.$ Let $
	r=\inf_{z\in D}\limsup_{n\rightarrow \infty }d(z,f^{n}(y))$ denote the
	asymptotic radius of $\{f^{n}(y)\}.$ Then, by properness of $(D,d)$, the
	asymptotic center
	\[
	A=\{x\in D:\limsup_{n\rightarrow \infty }d(x,f^{n}(y))=r\}
	\]
	of $\{f^{n}(y)\}$ is a nonempty compact subset of $D$ that is invariant
	under $f$, that is, $f(D)\subset D.$ Moreover $A$ is convex since $D$ is
	convex and $(D,d)$ satisfies condition (\ref{C}). From Brouwer's theorem $f$
	has a fixed point which contradicts our assumption.
\end{proof}

As in Section 3, we have also a variant of the above theorem for contractive
maps.

\begin{theorem}
	Suppose that $D$ is a bounded strictly convex domain of $V$ and $(D,d)$ is a
	complete geodesic space satisfying condition (\ref{C}) whose topology
	coincides with the norm topology. If $f:D\rightarrow D$ is a contractive
	mapping then there exists $\xi \in \overline{D}$ such that the iterates $
	f^{n}$ of $f$ converge uniformly on bounded sets of $D$ to $\xi .$
\end{theorem}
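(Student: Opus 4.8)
The plan is to recognize this theorem as the contractive counterpart of Theorem~\ref{main_Rn}, obtained by feeding the concrete convexity hypotheses into the abstract machinery of Section~3. First I would verify the two structural axioms. Since $(D,d)$ is a complete geodesic space whose topology agrees with the norm topology, Lemma~\ref{A1} shows it satisfies Axiom~$1$; in particular $(D,d)$ is proper. Then, because $D$ is strictly convex and $(D,d)$ satisfies condition~(\ref{C}), Lemma~\ref{A3} shows it satisfies Axiom~$3$. These are the only steps where the geometry of $D$ is used, and they are exactly what Proposition~\ref{A3star} and Lemma~\ref{A3} were built to provide.

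With Axioms~$1$ and~$3$ available, I would split according to whether the orbits of $f$ are bounded. If $f$ has an unbounded orbit then, being contractive, it is in particular nonexpansive, so Theorem~\ref{main2} applies verbatim and furnishes a point $\xi\in\partial D$ to which the iterates $f^{n}$ converge uniformly on bounded sets; here $\xi\in\partial D\subset\overline{D}$.

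In the remaining case every orbit is bounded, and I would produce a fixed point exactly as in the contractive corollary of Theorem~\ref{main}. Fixing $y\in D$ and using properness, extract a subsequence with $f^{n_{i}}(y)\to z\in D$. Since $f$ is nonexpansive, $d_{n}=d(f^{n}(y),f^{n+1}(y))$ is nonincreasing, hence convergent, and by continuity its limit equals $d(z,f(z))=d(f(z),f^{2}(z))$; were $z\neq f(z)$ this would contradict contractivity, so $f(z)=z$. As $n\mapsto d(f^{n}(y),z)$ is then nonincreasing with a subsequence tending to $0$, it tends to $0$, giving $f^{n}(y)\to z$ for every $y\in D$. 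Uniform convergence to $z$ on $d$-bounded sets then follows from a routine argument: on such a set the $d$-closure is compact by properness, the functions $y\mapsto d(f^{n}(y),z)$ are continuous and nonincreasing in $n$ and converge pointwise to $0$, so Dini's theorem gives uniformity (equivalently, one runs a short contradiction argument using properness).

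The only genuine obstacle is the passage from the concrete convexity data to Axiom~$3$, which is carried out in Proposition~\ref{A3star} and Lemma~\ref{A3}; once Axiom~$3$ is in hand the dichotomy above is routine and simply repackages the arguments already given for Theorem~\ref{main_Rn} and for the corollaries of Section~3. In particular $\xi$ lands in $\partial D$ in the unbounded case and is an interior fixed point in the bounded case, so in both cases $\xi\in\overline{D}$, as claimed.
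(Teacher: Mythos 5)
Your proposal is correct and follows essentially the same route the paper intends: Lemmas \ref{A1} and \ref{A3} to secure Axioms $1$ and $3$, Theorem \ref{main2} in the unbounded-orbit case, and the Edelstein-type argument from the contractive corollary of Section 3 (convergent subsequence, monotone displacement, fixed point $z$, then monotone decay of $d(f^n(y),z)$) in the bounded case. The Dini/compactness justification of uniform convergence on bounded sets is a reasonable filling-in of a step the paper leaves implicit.
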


In particular, Theorem \ref{main_Rn} is valid for classical metrics
discussed in Section 2 (see Lemmas \ref{h} and \ref{k}).

\begin{corollary}
	\label{hilb}Let $D\subset \mathbb{R}^{n}$ be a bounded strictly convex
	domain. If $f:(D,d_{H})\rightarrow (D,d_{H})$ is a fixed-point free
	nonexpansive map then there exists $\xi \in \partial D$ such that the
	iterates $f^{n}$ of $f$ converge uniformly on bounded sets of $D$ to $\xi .$
\end{corollary}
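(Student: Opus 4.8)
The plan is to deduce this corollary directly from Theorem \ref{main_Rn} by checking that Hilbert's metric $d_{H}$ meets each of its hypotheses. Theorem \ref{main_Rn} requires that $D$ be a bounded strictly convex domain (given here, with $V=\R^{n}$), that $(D,d_{H})$ be a complete geodesic space whose topology coincides with the norm topology, and that condition (\ref{C}) hold. So the entire task reduces to verifying these structural properties of $(D,d_{H})$; the dynamical content is already carried by Theorem \ref{main_Rn}.

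First I would invoke the facts recalled in Section 2. Since $D$ is a bounded open convex subset of $V$, there exist a closed normal cone $K\subset V\times\R$ with non-empty interior and a strictly positive functional $\varphi\in(V\times\R)^{\ast}$ such that $D$ is isometric to $\Sigma_{\varphi}^{\circ}=\{x\in\inter K:\varphi(x)=1\}$ equipped with $d_{H}$. Moreover, $(\Sigma_{\varphi}^{\circ},d_{H})$ is a complete geodesic space whose topology coincides with the norm topology. This simultaneously supplies completeness, the geodesic property, and the agreement of the two topologies for $(D,d_{H})$. Concretely, the identification is the standard affine one: embedding $V$ as the affine hyperplane $\{(x,1):x\in V\}$ in $V\times\R$, taking $K$ to be the cone over $\overline{D}\times\{1\}$ and $\varphi(x,t)=t$, one sees that $\Sigma_{\varphi}^{\circ}=D\times\{1\}$ and that $d_{H}$ coincides with the cross-ratio metric $\widehat{d}_{H}$ on $D$.

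It remains to verify condition (\ref{C}), and this is exactly the content of Lemma \ref{h}. The one point to be careful about is that (\ref{C}) involves the linear convex combinations $sx+(1-s)y$ taken in $V$, whereas Lemma \ref{h} is phrased on $\Sigma_{\varphi}^{\circ}$. Because the identification $D\cong\Sigma_{\varphi}^{\circ}$ described above is affine, one has $s(x,1)+(1-s)(y,1)=(sx+(1-s)y,1)$, so linear convex combinations in $D$ correspond precisely to those in $\Sigma_{\varphi}^{\circ}$. Hence Lemma \ref{h} yields $d_{H}(sx+(1-s)y,z)\le\max\{d_{H}(x,z),d_{H}(y,z)\}$ for all $x,y,z\in D$ and $s\in[0,1]$, which is condition (\ref{C}). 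I expect this affine-identification check to be the only genuinely substantive step; everything else is a direct citation.

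With all hypotheses of Theorem \ref{main_Rn} in force and observing that a fixed-point free map is the same as a nonexpansive mapping without fixed points, Theorem \ref{main_Rn} applies and produces a point $\xi\in\partial D$ such that the iterates $f^{n}$ converge uniformly on bounded sets of $D$ to $\xi$. This completes the proof.
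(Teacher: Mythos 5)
Your proposal is correct and is essentially the paper's own argument: the authors derive Corollary \ref{hilb} by remarking that Theorem \ref{main_Rn} applies to the classical metrics of Section 2, with condition (\ref{C}) supplied by Lemma \ref{h} and the complete-geodesic/topology facts supplied by the cone representation $D\cong\Sigma_{\varphi}^{\circ}$. Your explicit check that the affine identification preserves convex combinations (so Lemma \ref{h} really gives (\ref{C}) on $D$ itself) is a detail the paper leaves implicit, but it is the same route.
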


\begin{corollary}
	\label{thom}Let $D\subset \mathbb{R}^{n}$ be a bounded strictly convex
	domain. If $(D,d_{T})$ is a geodesic space and $f:(D,d_{T})\rightarrow
	(D,d_{T})$ is a fixed-point free nonexpansive map then there exists $\xi \in
	\partial D$ such that the iterates $f^{n}$ of $f$ converge uniformly on
	bounded sets of $D$ to $\xi .$
\end{corollary}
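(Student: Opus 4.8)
The plan is to obtain this corollary as a direct instance of Theorem~\ref{main_Rn} applied with $d = d_T$. Thus it suffices to verify that $(D, d_T)$ is a complete geodesic space whose topology coincides with the norm topology and that $d_T$ satisfies condition~(\ref{C}); the existence of an attracting point $\xi \in \partial D$ for the fixed-point-free nonexpansive map $f$ then follows at once, exactly as in Corollary~\ref{hilb}.

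First I would establish condition~(\ref{C}) for the Thompson metric, running the argument of Lemma~\ref{h} almost verbatim. Setting $r = \max\{d_T(x,z), d_T(y,z)\}$, the definition of $d_T$ yields the four cone inequalities $x \le e^r z$, $z \le e^r x$, $y \le e^r z$ and $z \le e^r y$. Since the cone order is preserved under addition and multiplication by nonnegative scalars, the convex combination satisfies both $sx + (1-s)y \le e^r z$ and $z = s z + (1-s) z \le e^r\big(sx + (1-s)y\big)$, so that $\max\{M((sx+(1-s)y)/z), M(z/(sx+(1-s)y))\} \le e^r$ and hence $d_T(sx+(1-s)y, z) \le r$. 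This is precisely the ``similar argument'' announced after Lemma~\ref{h}.

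Second I would collect the remaining structural hypotheses. Geodesy of $(D, d_T)$ is assumed in the statement, since for the Thompson metric it is not automatic, in contrast with the Hilbert case of Corollary~\ref{hilb}. Completeness of $(D, d_T)$ and the coincidence of its topology with the norm topology are transported from the cone/section description of $D$ recalled in Section~2, using the standard properties of $d_T$ on $\Sigma_\varphi^\circ$ from \cite{LeNu1, Th}. With $D$ bounded and strictly convex by hypothesis, all assumptions of Theorem~\ref{main_Rn} are met, and the conclusion follows.

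The only genuinely nonroutine point is the completeness and topological statement: because $d_T$ is defined on the ambient cone rather than intrinsically on $D$, one has to pass through the isometric identification of $D$ with a normalized section $\Sigma_\varphi^\circ$ and pull these properties back. The verification of condition~(\ref{C}) and the final appeal to Theorem~\ref{main_Rn} are then immediate.
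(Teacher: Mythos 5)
Your proposal is correct and follows exactly the route the paper intends: the paper derives Corollary~\ref{thom} by invoking Theorem~\ref{main_Rn}, with condition~(\ref{C}) for $d_T$ supplied by the ``similar argument'' remarked after Lemma~\ref{h} (which you carry out correctly) and the geodesic hypothesis built into the statement. Your verification of the remaining structural hypotheses (completeness and coincidence of topologies via the section $\Sigma_{\varphi}^{\circ}$) fills in details the paper leaves implicit, but the argument is the same.
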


Corollary \ref{hilb} was shown by Beardon \cite[Theorems 1, 1a]{Be2} who
proved a variant of the classical intersecting chord theorem and then showed
that bounded strictly convex domains $(D,d_{H})$ satisfy Axiom $2$.
Corollary \ref{thom} appears to be new, compare also
\cite[Theorem 3.2]{LLNW}.

A similar conclusion holds true for bounded strictly convex domains in $
\mathbb{C}^{n}$ with the Kobayashi metric.

\begin{corollary}
	\label{kob}Let $D\subset \mathbb{C}^{n}$ be a bounded strictly convex
	domain. If $f:(D,k_{D})\rightarrow (D,k_{D})$ is a fixed-point free nonexpansive map
	then there exists $\xi \in \partial D$ such that the iterates $f^{n}$ of $f$
	converge uniformly on bounded sets of $D$ to $\xi.$
\end{corollary}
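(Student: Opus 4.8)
The plan is to verify that the space $(D,k_D)$ satisfies every hypothesis of Theorem~\ref{main_Rn}, so that the conclusion follows at once. Since $D\subset\mathbb{C}^n$ is bounded and strictly convex, the only points to check are that $(D,k_D)$ is a complete geodesic space whose topology coincides with the norm topology, and that condition~(\ref{C}) holds.

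First I would recall the standard facts about the Kobayashi distance on a bounded convex domain. For such a domain $k_D$ is indeed a genuine distance (not merely a pseudodistance), and by a classical result of Lempert the topology induced by $k_D$ agrees with the norm topology on $D$; moreover $(D,k_D)$ is a complete geodesic space, with geodesic segments realized by complex geodesics. These are precisely the structural inputs required to invoke the machinery of Section~3, and I would cite them rather than reprove them. The completeness together with local compactness (inherited from $\mathbb{C}^n$ via the norm-topology agreement) gives properness through the Hopf--Rinow theorem, exactly as used in Lemma~\ref{A1}.

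The substantive verification is condition~(\ref{C}), namely
\[
k_D(sx+(1-s)y,z)\leq \max\{k_D(x,z),k_D(y,z)\}
\]
for all $x,y,z\in D$ and $s\in[0,1]$. But this is precisely the content of Lemma~\ref{k}, which was proved earlier for any bounded convex domain $D$ of a complex Banach space by building a holomorphic disc $h(\gamma)=sf(\gamma)+(1-s)g(\xi\gamma/\zeta)$ out of the two competing discs and using convexity of $D$ together with the contracting property of the Kobayashi distance. Since a bounded strictly convex domain is in particular convex, Lemma~\ref{k} applies verbatim, so (\ref{C}) holds.

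With these ingredients in hand, the argument is immediate: Lemma~\ref{A1} gives Axiom~1, Lemma~\ref{A3} (which combines (\ref{C}) with strict convexity) gives Axiom~3, and Theorem~\ref{main_Rn} then yields a point $\xi\in\partial D$ to which the iterates $f^n$ of any fixed-point-free nonexpansive $f$ converge uniformly on bounded sets. I do not anticipate a genuine obstacle here, since the corollary is designed to be a direct specialization; the only point demanding care is the appeal to external results guaranteeing that $(D,k_D)$ is a complete geodesic space with norm-compatible topology, which is where the finite-dimensionality and convexity of $D$ are really used.
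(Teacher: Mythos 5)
Your proposal is correct and follows exactly the route the paper intends: Corollary \ref{kob} is obtained as a direct specialization of Theorem \ref{main_Rn}, with condition (\ref{C}) supplied by Lemma \ref{k} and the remaining structural hypotheses (completeness, geodesicity via Lempert's complex geodesics, and agreement of the $k_D$-topology with the norm topology on a bounded convex domain of $\mathbb{C}^n$) imported from the standard literature. Your identification of the Lempert-type facts as the only external inputs requiring care matches the paper's treatment.
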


Corollary \ref{kob} was shown by Budzy\'{n}ska \cite[Theorem 5.3]{Bu1} with
the use of the Earle--Hamilton theorem and properties of horoballs.

The above results leave some room for improvements. Let $D\subset V$ be a
convex domain. Given $\xi \in \partial D,F\subset \partial D$, set
\begin{eqnarray*}
	\ch (\xi ) &=&\{x\in \partial D:[x,\xi ]\subset \partial D\} \\
	\ch (F) &=&\bigcup_{\xi \in F} \ch (\xi ).
\end{eqnarray*}
If $f:D\rightarrow D, \ y\in D$ then $\omega _{f}(y)$ denotes the set of
accumulation points of $\{f^{n}(y)\}$ (in the norm topology) and $\Omega
_{f}=\bigcup_{y\in D}\omega _{f}(y)$ is the attractor of $f.$ We conclude
this section with a general version of Abate and Raissy's Theorem 6 in
\cite{AbRa}, who proved it for bounded convex domains with the Kobayashi metric.

\begin{proposition}
	Let $D$ be a bounded convex domain of $V$ and let $(D,d)$ be a complete
	geodesic space satisfying condition (\ref{C}) whose topology coincides with
	the norm topology. If $f:D\rightarrow D$ is a fixed-point free nonexpansive
	map then there exists $\xi \in \partial D$ such that
	\[
	\Omega _{f}\subset \bigcap_{r\in \mathbb{R}} \ch (\overline{F_{z_{0}}(\xi ,r)%
	}\cap \partial D)
	\]
	for some $z_{0}\in D.$
\end{proposition}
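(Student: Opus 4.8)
The plan is to first show that fixed-point freeness forces the orbits to be unbounded, then to extract the distinguished boundary point $\xi$ from Lemma \ref{lem1}, and finally to convert the horoball membership provided by that lemma into the required boundary-segment relation via Proposition \ref{A3star}.

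First I would establish unbounded orbits. Suppose, to the contrary, that some orbit $\{f^{n}(y)\}$ is bounded; by Ca\l ka's Theorem \ref{calka} every orbit is then bounded. Since $(D,d)$ satisfies Axiom $1$ (Lemma \ref{A1}), it is proper, so the function $x\mapsto \limsup_{n}d(x,f^{n}(y))$ has compact sublevel sets and its infimum $r$ is attained on the asymptotic center $A=\{x\in D:\limsup_{n}d(x,f^{n}(y))=r\}$, which is nonempty and compact. Condition (\ref{C}) makes this function quasiconvex, so $A$ is a (linearly) convex compact subset of $V$, and nonexpansiveness of $f$ gives $f(A)\subset A$. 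Brouwer's fixed point theorem then produces a fixed point of $f$ in $A$, a contradiction. Hence every orbit diverges to infinity in $(D,d)$, and in particular $\Omega_{f}\subset \partial D$ by Axiom $1$.

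Next, fix $z_{0}\in D$ and let $\xi \in \partial D$ be the point supplied by Lemma \ref{lem1}, which applies since $(D,d)$ satisfies Axiom $1$ and $f$ has no bounded orbit. Take an arbitrary $\eta \in \Omega_{f}$, say $\eta \in \omega_{f}(y)$, and choose a sequence $\{a_{n}\}$ with $f^{a_{n}}(y)\to \eta$ in norm. For a fixed $r\in \R$, Lemma \ref{lem1} applied to this sequence $\{a_{n}\}$ (together with $z_{0}$ and $r$) furnishes $z\in D$ and a subsequence $\{a_{n_{k}}\}$ with $f^{a_{n_{k}}}(z)\in F_{z_{0}}(\xi ,r)$ for every $k$. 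Passing to a further subsequence, compactness of $\overline{D}$ and orbit divergence give $f^{a_{n_{k}}}(z)\to \zeta$ for some $\zeta \in \partial D$, so that $\zeta \in \overline{F_{z_{0}}(\xi ,r)}\cap \partial D$.

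Finally I would apply Proposition \ref{A3star} to the aligned pair $x_{k}=f^{a_{n_{k}}}(y)\to \eta$ and $y_{k}=f^{a_{n_{k}}}(z)\to \zeta$. The decisive point is that nonexpansiveness keeps $d(x_{k},y_{k})\leq d(y,z)$ bounded, while Axiom $1$ forces $d(y_{k},w)\to \infty$ for every $w\in D$; hence $d(x_{k},y_{k})-d(y_{k},w)\to -\infty$, and Proposition \ref{A3star} yields $[\eta ,\zeta ]\subset \partial D$, that is $\eta \in \ch(\zeta )\subset \ch(\overline{F_{z_{0}}(\xi ,r)}\cap \partial D)$. Since $r$ was arbitrary, $\eta$ lies in the intersection over all $r$, and since $\eta \in \Omega_{f}$ was arbitrary the claimed inclusion follows. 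The main obstacle is the bookkeeping that keeps the two subsequences aligned: Lemma \ref{lem1} must be invoked on the very sequence $\{a_{n}\}$ realizing $\eta$ as an accumulation point, so that $f^{a_{n_{k}}}(y)\to \eta$ and $f^{a_{n_{k}}}(z)\in F_{z_{0}}(\xi ,r)$ hold along one common subsequence. This simultaneity, combined with the bounded displacement $d(f^{a_{n_{k}}}(y),f^{a_{n_{k}}}(z))\leq d(y,z)$, is exactly what supplies the hypothesis of Proposition \ref{A3star}.
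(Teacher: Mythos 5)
Your proposal is correct and follows essentially the same route as the paper's own proof: rule out bounded orbits via the asymptotic center and Brouwer's theorem (as in Theorem \ref{main_Rn}), obtain $\xi$ and the horoball membership $f^{a_{n_k}}(z)\in F_{z_0}(\xi,r)$ from Lemma \ref{lem1} applied along the sequence realizing $\eta$, and conclude $[\eta,\zeta]\subset\partial D$ from Proposition \ref{A3star}. You merely make explicit the verification (bounded displacement $d(f^{a_{n_k}}(y),f^{a_{n_k}}(z))\leq d(y,z)$ together with $d(f^{a_{n_k}}(z),w)\to\infty$) that the paper leaves implicit when invoking Proposition \ref{A3star}.
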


\begin{proof}
	Fix $y\in Y.$ There is no loss of generality in assuming that $d(f^{n}(y),y)$
	diverges to $\infty $ since otherwise, there is a fixed point of $f$ as in
	the proof of Theorem \ref{main_Rn}. Choose $\xi \in \partial D$ satisfying
	the conclusion of Lemma \ref{lem1}. Fix $z_{0},y\in D, \ r\in \mathbb{R}$ and
	select a subsequence $\{f^{a_{n}}(y)\}$ of iterates of $f$ converging to
	some $\eta .$ From Lemma \ref{lem1} there exists $z\in D$ and a subsequence $
	\{a_{n_{k}}\}$ of $\{a_{n}\}$ such that $f^{a_{n_{k}}}(z)\in F_{z_{0}}(\xi
	,r)$ for every $k\in \mathbb{N}.$ We can assume by passing to another
	subsequence that $f^{a_{n_{k}}}(z)\rightarrow \zeta \in \overline{
		F_{z_{0}}(\xi ,r)}.$ Clearly, $\eta ,\zeta \in \partial D$ since the orbits
	of $f$ diverge to $\infty $. Now Proposition \ref{A3star} yields $[\eta
	,\zeta ]\subset \partial D$, that is, $\eta \in \ch (\overline{F_{z_{0}}(\xi
		,r)}\cap \partial D).$ Since $r$ is arbitrary, this proves the theorem.
\end{proof}

Karlsson and Noskov showed in \cite{KaNo} that if a bounded convex domain $D$
is endowed with the Hilbert metric $d_{H}$, then the attractor $\Omega _{f}$
of a fixed-point free nonexpansive map $f:D\rightarrow D$ is a star-shaped
subset of $\partial D$. This has led to a conjecture formulated by Karlsson
and Nussbaum asserting that $\co \Omega _{f}\subset \partial D$
(see \cite{Ka3, Nu}). It remains one of the major problems in the field.

\section{The case of compact mappings}

The objective of this section is to extend the results of Section 4 to
infinite dimensional spaces. Therefore, we have to modify Axiom $1$ and in
this section we will assume\medskip

\noindent \textbf{Axiom 1'}. The metric space $(Y,d)$ is an open dense
subset of a metric space $(\overline{Y},\overline{d})$, whose relative
topology coincides with the metric topology. For any $w\in Y$, if $\{x_{n}\}$
is a sequence in $Y$ converging to $\xi \in \partial Y=\overline{Y}\setminus
Y$, then $d(x_{n},w)\rightarrow \infty $ (the compactness of $\overline{Y}$
is not required).\medskip

Notice that Axiom 1' implies that if $A\subset Y$ is bounded, then the $
\overline{d}$-closure of $A$ does not intersect the boundary $\partial Y$
and hence coincides with the $d$-closure of $A$.

Since in general, there are sequences without convergent subsequences, we
also need a slight modification of Axioms $3$ and $4$:\medskip

\noindent \textbf{Axiom 3'}. If $\{x_{n}\}$ and $\{y_{n}\}$ are sequences in
$Y$, $x_{n}\rightarrow \xi \in \partial Y$, $y_{n}\rightarrow \eta \in
\partial Y$, and if for some $w\in Y,$
\[
d(x_{n},y_{n})-d(y_{n},w)\rightarrow -\infty ,
\]
then $\xi =\eta .$\medskip

\noindent \textbf{Axiom 4'}. If $\{x_{n}\}$ and $\{y_{n}\}$ are sequences in
$Y$, $x_{n}\rightarrow \xi \in \partial Y$, $y_{n}\rightarrow \eta \in
\partial Y$, and if for all $n,$
\[
d(x_{n},y_{n})\leq c
\]
for some constant $c$, then $\xi =\eta .$\medskip

We will also consider a wider class of quasi-geodesic metric spaces. Recall
that a curve $\gamma :[a,b]\rightarrow Y$ is called 
\textit{$(\lambda ,\kappa)$-quasi-geodesic} if there exists $\lambda \geq 1,\kappa \geq 0$ such
that for all $t_{1},t_{2}\in \lbrack a,b],$
\[
\frac{1}{\lambda }\left\vert t_{1}-t_{2}\right\vert -\kappa \leq d(\gamma
(t_{1}),\gamma (t_{2}))\leq \lambda \left\vert t_{1}-t_{2}\right\vert
+\kappa .
\]
A metric space $Y$ is called $(\lambda ,\kappa)$\textit{-quasi-geodesic} if
every pair of points in $Y$ can be connected by a $(\lambda ,\kappa)$-quasi-geodesic. We say 
that a mapping $f:Y\rightarrow Y$ is \textit{compact}
if $\overline{f(Y)}$, the $\overline{d}$-closure of $f(Y)$, is compact in $(
\overline{Y},\overline{d}).$ We will need the following counterpart of Lemma
\ref{lem1}. The proof is a little more subtle.

\begin{lemma}
	\label{lem1_comp}Suppose that $Y$ is a $(1,\kappa )$-quasi-geodesic space
	satisfying Axiom 1' and $f:Y\rightarrow Y$ is a compact nonexpansive mapping
	without a bounded orbit. Then there exists $\xi \in \partial Y$ such that
	for every $z_{0}\in Y$, $r\in \mathbb{R}$ and a sequence of natural numbers $
	\{a_{n}\}$, there exists $z\in Y$ and a subsequence $\{a_{n_{k}}\}$ of $
	\{a_{n}\}$ such that $f^{a_{n_{k}}}(z)\in F_{z_{0}}(\xi ,r)$ for every $k\in
	\mathbb{N}$. Moreover, if $Y$ satisfies Axiom 4', then $\xi \in
	\bigcap_{r\in \mathbb{R}}\overline{F_{z_{0}}(\xi ,r)}.$
\end{lemma}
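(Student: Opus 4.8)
The plan is to run the proof of Lemma~\ref{lem1} again, replacing every appeal to properness (compactness of closed balls) by an appeal to the compactness of $\overline{f(Y)}$ in $(\overline{Y},\overline{d})$. First I would fix $y\in Y$ and show that $d(f^{n}(y),y)\rightarrow\infty$. Since $f^{n}(y)=f(f^{n-1}(y))\in f(Y)$ for $n\geq 1$, the whole orbit $O(y)$ lies in the compact set $\overline{f(Y)}$, and the argument proving Theorem~\ref{calka} applies once the compact set $B=\overline{O(y)}\cap\bar{B}(x,1)$ is produced as a closed subset of $\overline{f(Y)}$ rather than from properness of $Y$ (note that $\bar{B}(x,1)$ is bounded, so by the remark following Axiom~1' its $\overline{d}$-closure does not meet $\partial Y$, whence $B\subset Y$ and the two topologies agree on it). As the orbit is unbounded we get $d(f^{n}(y),y)\rightarrow\infty$. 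By Observation~3.1 of \cite{Ka1} there is a sequence $\{\varphi(i)\}$ with $d(f^{m}(y),y)<d(f^{\varphi(i)}(y),y)$ for $m<\varphi(i)$, and since $\{f^{\varphi(i)}(y)\}\subset f(Y)$ has a $\overline{d}$-convergent subsequence I may assume $f^{\varphi(i)}(y)\rightarrow\xi$; because $d(f^{\varphi(i)}(y),y)\rightarrow\infty$, Axiom~1' forces $\xi\in\partial Y$.

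Next, fixing $r>0$ and the sequence $\{a_{n}\}$, I would carry out the inductive selection of Lemma~\ref{lem1} with $(1,\kappa)$-quasi-geodesics in place of geodesics: on a $(1,\kappa)$-quasi-geodesic joining $y$ to a suitable iterate $f^{m(i,n)}(y)$ I select a point $z^{n}_{\varphi_{n-1}(i)}$ at parameter $r$ from $y$, for which the quasi-geodesic inequalities give $d(z^{n}_{\varphi_{n-1}(i)},f^{m(i,n)}(y))-d(f^{m(i,n)}(y),y)\leq -r+2\kappa$, the only change being the harmless additive constant $2\kappa$. The two extractions that in Lemma~\ref{lem1} rested on compactness of balls I would now route through $f$: the points $f(z^{n}_{\varphi_{n-1}(i)})$ lie in $f(Y)$ and stay within $r+\kappa$ of $f(y)$, so a subsequence converges in $\overline{d}$ to some $\tilde{z}_{n}\in Y$; and since each $\tilde{z}_{n}\in\overline{f(Y)}$, the bounded sequence $\{\tilde{z}_{n}\}$ itself has a convergent subsequence $\tilde{z}_{n_{k}}\rightarrow z\in Y$. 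A diagonal argument identical to that in Lemma~\ref{lem1} then produces a subsequence $\{\psi(i)\}$ along which $\liminf_{w\rightarrow\xi}d(f^{a_{n_{k}}}(z),w)-d(w,z_{0})\leq -r+C$ for every $k$, where $C$ absorbs the constant $2\kappa$, the diagonalization slack, and $d(z_{0},y)$; since $r>0$ is arbitrary this gives $f^{a_{n_{k}}}(z)\in F_{z_{0}}(\xi,r)$ and proves the first assertion.

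The step I expect to be most delicate — and the reason the proof is ``a little more subtle'' — is the exponent bookkeeping forced by routing the extractions through $f$. Replacing the limit $z_{n}$ of Lemma~\ref{lem1} by $\tilde{z}_{n}=\lim_{i}f(z^{n}_{\varphi_{n-1}(i)})$ effectively applies $f$ one extra time, so in the chain of nonexpansiveness estimates the power of $f$ acting on $y$ drops by one: bounding $d(f^{a_{n_{k}}}(z),f^{\psi(i)}(y))\leq d(z,f^{\psi(i)-a_{n_{k}}}(y))$ and then passing to $\tilde{z}_{n_{k}}$ produces the factor $d(z^{n_{k}}_{\varphi_{n_{k}}(j)},f^{\psi(i)-a_{n_{k}}-1}(y))$. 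To keep the conclusion about $f^{a_{n_{k}}}(z)$ with exactly the prescribed exponents, I would compensate by lowering the target index by one, selecting along $[y,f^{\varphi_{n-1}(i)-a_{n}-1}(y)]$ (assuming $\varphi_{n-1}(i)>a_{n}+1$ and that the corresponding distance exceeds $r$), so that the single application of $f$ realigns the powers; the monotonicity $d(f^{m}(y),y)<d(f^{\varphi(i)}(y),y)$ for $m<\varphi(i)$ is then used, with the shifted index $\psi(i)-a_{n_{k}}-1<\psi(i)$, exactly as in Lemma~\ref{lem1} to replace $d(f^{\psi(i)}(y),z_{0})$ by the smaller quantity along the quasi-geodesic.

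Finally, for the ``moreover'' part under Axiom~4', I would take $a_{i}=\varphi(i)$ in the first part, obtaining $\tilde{z}\in Y$ and a subsequence $\{\varphi(i_{k})\}$ with $f^{\varphi(i_{k})}(\tilde{z})\in F_{z_{0}}(\xi,r)$. Since $d(f^{\varphi(i_{k})}(y),f^{\varphi(i_{k})}(\tilde{z}))\leq d(y,\tilde{z})$ is bounded while $d(f^{\varphi(i_{k})}(y),y)\rightarrow\infty$, we have $d(f^{\varphi(i_{k})}(\tilde{z}),y)\rightarrow\infty$; hence, by compactness of $\overline{f(Y)}$, every $\overline{d}$-limit of $\{f^{\varphi(i_{k})}(\tilde{z})\}$ lies in $\partial Y$, and Axiom~4' identifies each such limit with $\xi$. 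Therefore $f^{\varphi(i_{k})}(\tilde{z})\rightarrow\xi$, so $\xi\in\overline{F_{z_{0}}(\xi,r)}$, and as $r$ is arbitrary $\xi\in\bigcap_{r\in\mathbb{R}}\overline{F_{z_{0}}(\xi,r)}$.
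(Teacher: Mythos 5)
Your proposal is correct and follows essentially the same route as the paper's proof: establish $d(f^n(y),y)\to\infty$ via Ca\l{}ka's theorem applied to the orbit inside the compact set $\overline{f(Y)}$, select points at distance $r$ on quasi-geodesics to the shifted iterates $f^{\varphi_{n-1}(i)-a_n-1}(y)$, push them through $f$ to regain compactness, and realign the exponents exactly as you describe (the paper's additive constant is $3\kappa$ rather than $2\kappa$, which is immaterial since $r$ is arbitrary). Your handling of the ``moreover'' part, extracting convergent subsequences of $\{f^{\varphi(i_k)}(\tilde z)\}$ in $\overline{f(Y)}$ before invoking Axiom 4', correctly fills in a detail the paper leaves implicit.
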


\begin{proof}
	Fix $y\in Y.$ Since $f:Y\rightarrow Y$ is compact, the $\overline{d}$-closure
	$\overline{O(y)}$ of the orbit $\{f^{n}(y):n\geq 1\}$ is compact in
	$\overline{Y}$ and hence, from Axiom 1', $\overline{O(y)}\cap B=O(y)\cap
	B\subset Y$ is compact in $Y$ for any $d$-closed and bounded set $B\subset Y$.
	Thus $(O(y),d)$ is proper and from Ca\l ka's theorem and nonexpansivity of
	$f$ we conclude that $d_{n}=d(f^{n}(y),y)\rightarrow \infty $ as $
	n\rightarrow \infty $. As in Lemma \ref{lem1}, there is a sequence $
	\{\varphi (i)\}$ of natural numbers such that $d_{m}<d_{\varphi (i)}$ for $
	m<\varphi (i), \ i=1,2,\ldots $. Since $f$ is compact, we can assume without
	loss of generality that $\{f^{\varphi (i)}(y)\}$ converges to some point $
	\xi \in \partial Y$. Fix $r>0$ and a sequence of natural numbers $\{a_{n}\}.$
	On each $(1,\kappa )$-quasi-geodesic segment $[y,f^{\varphi
		(i)-a_{1}-1}(y)]\subset Y$ (assuming that $\varphi (i)>a_{1}+1$ and $
	d(f^{\varphi (i)-a_{1}-1}(y),y)>r$) joining $y$ to $f^{\varphi
		(i)-a_{1}-1}(y)$ select a point $z_{\varphi (i)}^{1}$ such that $
	d(z_{\varphi (i)}^{1},y)=r$. Since $Y$ is $(1,\kappa )$-quasi-geodesic we
	have
	\[
	d(f^{\varphi(i)-a_{1}-1}(y),z_{\varphi (i)}^{1})+d(z_{\varphi
		(i)}^{1},y)\leq d(f^{\varphi(i)-a_{1}-1}(y),y)+3\kappa
	\]
	for each $i.$ By assumption, $f(Y)$ is compact in $(\overline{Y},\overline{d})$ and
	hence there is a subsequence $\{\varphi _{1}(i)\}$ of $\{\varphi (i)\}$ such
	that $\{f(z_{\varphi _{1}(i)}^{1})\}$ converges to some $z_{1}\in f(Y).$
	Hence
	\begin{eqnarray*}
		&&\limsup_{i\rightarrow \infty }d(z_{1},f^{\varphi
			_{1}(i)-a_{1}}(y))-d(f^{\varphi _{1}(i)-a_{1}-1}(y),y) \\
		&=&\limsup_{i\rightarrow \infty }d(f(z_{\varphi _{1}(i)}^{1}),f^{\varphi
			_{1}(i)-a_{1}}(y))-d(f^{\varphi _{1}(i)-a_{1}-1}(y),y) \\
		&\leq &\limsup_{i\rightarrow \infty }d(z_{\varphi _{1}(i)}^{1},f^{\varphi
			_{1}(i)-a_{1}-1}(y))-d(f^{\varphi _{1}(i)-a_{1}-1}(y),y) \\
		&\leq &\limsup_{i \to \infty} -d(z_{\varphi
			(i)}^{1},y)+3\kappa =-r+3\kappa.
	\end{eqnarray*}
	By induction, for every $n>1$, we can select on each $(1,\kappa )$-quasi-geodesic 
	segment $[y,f^{\varphi _{n-1}(i)-a_{n}-1}(y)]\subset Y$
	(assuming that $\varphi _{n-1}(i)>a_{n}-1$ and $d(f^{\varphi
		_{n-1}(i)-a_{n}-1}(y),y)>r$) a point $z_{\varphi _{n-1}(i)}^{n}$ such that $
	d(z_{\varphi _{n-1}(i)}^{n},y)=r$ and a subsequence $\{\varphi _{n}(i)\}$ of
	$\{\varphi _{n-1}(i)\}$ such that $\{f(z_{\varphi _{n}(i)}^{n})\}$ converges
	to some $z_{n}\in f(Y).$ Hence
	\[
	\limsup_{i\rightarrow \infty }d(z_{n},f^{\varphi
		_{n}(i)-a_{n}}(y))-d(f^{\varphi _{n}(i)-a_{n}-1}(y),y)\leq -r+3\kappa
	\]
	and since $\{z_{n}\}$ is contained in the compact set $\overline{f(Y)}\cap
	\bar{B}(y,r)\subset Y$ there is a subsequence $\{z_{n_{k}}\}$ of $\{z_{n}\}$
	converging to some $z\in Y$ with $d(z_{n_{k}},y)<\frac{1}{2}$ for each $k$.
	By diagonalization, there is a subsequence $\{\psi (i)\}_{i\in \mathbb{N}}$
	of any $\{\varphi _{n}(i)\}_{i\in \mathbb{N}}$ such that
	\[
	d(z,f^{\psi (i)-a_{n_{k}}}(y))-d(f^{\psi (i)-a_{n_{k}}-1}(y),y)<-r+1+3\kappa
	\]
	for every $k\in \mathbb{N}$ and $i\geq k.$ Hence, for every $a_{n_{k}}$,
	\begin{eqnarray*}
		\liminf_{w\rightarrow \xi }d(f^{a_{n_{k}}}(z),w)-d(w,y) &\leq
		&\limsup_{i\rightarrow \infty }d(f^{a_{n_{k}}}(z),f^{\psi (i)}(y))-d(f^{\psi
			(i)}(y),y) \\
		&\leq &\limsup_{i\rightarrow \infty }d(z,f^{\psi
			(i)-a_{n_{k}}}(y))-d(f^{\psi (i)}(y),y) \\
		&\leq &\limsup_{i\rightarrow \infty }d(z,f^{\psi
			(i)-a_{n_{k}}}(y))-d(f^{\psi (i)-a_{n_{k}}-1}(y),y) \\
		&\leq &-r+1+3\kappa
	\end{eqnarray*}
	
	and thus
	\[
	\liminf_{w\rightarrow \xi }d(f^{a_{n_{k}}}(z),w)-d(w,z_{0})\leq -r+1+3\kappa
	+d(z_{0},y)
	\]
	for every $z_{0}\in Y$. Since $r>0$ is arbitrary, this proves the first part
	of the lemma. As in the proof of Lemma \ref{lem1}, we show that $\xi \in
	\bigcap_{r\in \mathbb{R}}\overline{F_{z_{0}}(\xi ,r)}$.
\end{proof}

Having Lemma \ref{lem1_comp} in hand, we can prove counterparts of the
previous results.

\begin{theorem}
	\label{main_comp}Let $(Y,d)$ be a $(1,\kappa )$-quasi-geodesic space
	satisfying Axioms 1' and 3'. If $f:Y\rightarrow Y$ is a compact nonexpansive
	mapping without bounded orbits, then there exists $\xi \in \partial Y$ such
	that the iterates $f^{n}$ of $f$ converge uniformly on bounded sets of $Y$
	to $\xi .$
\end{theorem}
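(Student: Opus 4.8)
The plan is to mirror the proof of Theorem \ref{main} (equivalently Theorem \ref{main2}), replacing the global-compactness tools by the compact-mapping ones developed in this section. First I would record that Axioms 1' and 3' together imply Axiom 4': if $x_n\to\xi$ and $y_n\to\eta$ in $\partial Y$ with $d(x_n,y_n)\le c$, then $d(y_n,w)\to\infty$ by Axiom 1', so $d(x_n,y_n)-d(y_n,w)\to-\infty$, and Axiom 3' forces $\xi=\eta$. With Axiom 4' available, Lemma \ref{lem1_comp} applies and produces a point $\xi\in\partial Y$ such that, for every $z_0\in Y$, $r\in\mathbb{R}$ and every sequence $\{a_n\}$, some subsequence $\{a_{n_k}\}$ and some $z\in Y$ satisfy $f^{a_{n_k}}(z)\in F_{z_0}(\xi,r)$, and moreover $\xi\in\bigcap_{r}\overline{F_{z_0}(\xi,r)}$. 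This $\xi$ will be the attracting boundary point.

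Next I would establish the analogue of Lemma \ref{a3} valid under Axioms 1' and 3': any $\eta\in\partial Y$ lying in $\overline{F_{z_0}(\xi,r)}$ for every $r$ must equal $\xi$. Choosing $r_n\to-\infty$ and using $\eta\in\overline{F_{z_0}(\xi,r_n)}$ together with the definition of the big horoball, a diagonal choice yields points $u_n\in F_{z_0}(\xi,r_n)$ with $\overline{d}(u_n,\eta)\to 0$ and points $w_n\to\xi$ with $d(u_n,w_n)-d(w_n,z_0)\le r_n+\tfrac1n\to-\infty$. Setting $x_n=u_n\to\eta$ and $y_n=w_n\to\xi$, Axiom 3' yields $\eta=\xi$, exactly as in Lemma \ref{a3} but without appealing to compactness of $\overline{Y}$.

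The core convergence argument then runs as follows. Fix $y\in Y$; since $f$ has no bounded orbit, Ca\l ka's theorem applied to the proper space $(O(y),d)$ (as in Lemma \ref{lem1_comp}) gives $d(f^n(y),z_0)\to\infty$, while compactness of $f$ places the whole orbit in the compact set $\overline{f(Y)}$. Hence every subsequence of $\{f^n(y)\}$ has a $\overline{d}$-convergent sub-subsequence, and Axiom 1' forces its limit into $\partial Y$, since otherwise the distances to $z_0$ would remain bounded. Suppose $f^{a_n}(y)\to\eta\in\partial Y$ along some subsequence, and fix $r$. By Lemma \ref{lem1_comp} choose $z$ and a subsequence $\{a_{n_k}\}$ with $f^{a_{n_k}}(z)\in F_{z_0}(\xi,r)$; passing to a further subsequence, $f^{a_{n_k}}(z)\to\zeta$, and $\zeta\in\partial Y$ because $d(f^{a_{n_k}}(z),z_0)\ge d(f^{a_{n_k}}(y),z_0)-d(y,z)\to\infty$. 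As $d(f^{a_{n_k}}(y),f^{a_{n_k}}(z))\le d(y,z)$, Axiom 4' gives $\zeta=\eta$, whence $\eta\in\overline{F_{z_0}(\xi,r)}$. Since $r$ is arbitrary, $\eta\in\bigcap_{r}\overline{F_{z_0}(\xi,r)}$, so the previous step forces $\eta=\xi$. As every convergent subsequence of $\{f^n(y)\}$ in the compact set $\overline{f(Y)}$ therefore has limit $\xi$, the whole sequence satisfies $f^n(y)\to\xi$.

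Finally, uniform convergence on bounded sets is routine, as in Theorem \ref{main}: were it to fail there would be a neighbourhood $U$ of $\xi$, a bounded set $K$ and points $y_n\in K$ with $f^n(y_n)\notin U$; fixing $y\in K$ we have $d(f^n(y_n),f^n(y))\le\diam K$ and $f^n(y)\to\xi$, so the same compactness-plus-Axiom-4' extraction forces $f^n(y_n)\to\xi$, a contradiction. The main obstacle throughout is the lack of global compactness: each passage to a limit must be justified by the compactness of $\overline{f(Y)}$, and each such limit must first be pushed onto $\partial Y$ via Axiom 1' before Axioms 3' and 4' can be brought to bear — this is precisely where the hypothesis that $f$ be compact is used.
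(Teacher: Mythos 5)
Your proposal is correct and follows essentially the same route as the paper: invoke Lemma \ref{lem1_comp}, show $\bigcap_{r}\overline{F_{z_{0}}(\xi ,r)}=\{\xi \}$ by rerunning the argument of Lemma \ref{a3} with Axiom 3', and then repeat the subsequence argument of Theorem \ref{main}, with compactness of $f$ supplying the convergent subsequences that properness supplied before. Your explicit remark that Axioms 1' and 3' imply Axiom 4' is left implicit in the paper but is precisely what its proof uses.
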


\begin{proof}
	By assumption, there exists $\xi \in \partial Y$ satisfying the conclusion
	of Lemma \ref{lem1_comp}. Hence $\xi \in \bigcap_{r\in \mathbb{R}}\overline{
		F_{z_{0}}(\xi ,r)}$ and arguing as in the proof of Lemma \ref{a3}, $
	\bigcap_{r\in \mathbb{R}}\overline{F_{z_{0}}(\xi ,r)}=\{\xi \}.$ Fix $y\in
	Y. $ Since $f$ is compact there exists a subsequence $\{f^{a_{n}}(y)\}$ of
	iterates of $f$ converging to some $\eta \in \partial Y$. We show that $\eta
	=\xi $ for any converging subsequence $\{f^{a_{n}}(y)\}.$ Fix $r\in
	\mathbb{R }$. From Lemma \ref{lem1_comp} there exists a subsequence $\{a_{n_{k}}\}$
	of $\{a_{n}\}$ and $z\in Y$ such that
	\begin{equation}
	f^{a_{n_{k}}}(z)\in F_{z_{0}}(\xi ,r)\mbox{ for }k\in \mathbb{N}\mbox{.}
	\label{bhl2}
	\end{equation}
	The remaining part of the proof is essentially the same as the proof of
	Theorem \ref{main}.
\end{proof}

Notice that Proposition \ref{A3star} in Section 4 is valid for a convex
domain of any Banach space and thus we have the following counterpart of
Lemma \ref{A3}.

\begin{lemma}
	\label{A3'}Suppose that $D$ is a strictly convex domain of a Banach space $V$
	and $(D,d)$ is a metric space satisfying condition (\ref{C}) whose topology
	coincides with the norm topology. Then $(D,d)$ satisfies Axiom 3'.
\end{lemma}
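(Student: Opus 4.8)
The plan is to deduce Axiom 3' directly from Proposition \ref{A3star} together with strict convexity, exactly as Lemma \ref{A3} was obtained in the finite-dimensional setting. The only preliminary point is that Proposition \ref{A3star} is still available in the present generality, and indeed its proof uses nothing beyond the norm triangle inequality, condition (\ref{C}), and the coincidence of the $d$-topology with the norm topology; finite-dimensionality of $V$ is never invoked. Hence Proposition \ref{A3star} holds verbatim for a convex domain $D$ of an arbitrary Banach space $V$, which is the observation recorded just before the statement.

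First I would take sequences $\{x_{n}\}$ and $\{y_{n}\}$ in $D$ with $x_{n}\rightarrow \xi \in \partial D$, $y_{n}\rightarrow \eta \in \partial D$, and $d(x_{n},y_{n})-d(y_{n},w)\rightarrow -\infty$ for some $w\in D$. Since $D$ is strictly convex it is in particular convex, so Proposition \ref{A3star} applies and yields $[\xi ,\eta ]\subset \partial D$.

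Next I would invoke the definition of strict convexity: for any two points of $\overline{D}$ the open segment between them lies in $D$. Applying this to $\xi ,\eta \in \partial D\subset \overline{D}$, if $\xi \neq \eta$ we would have $(\xi ,\eta )\subset D$, while the previous step gives $(\xi ,\eta )\subset [\xi ,\eta ]\subset \partial D$. As $D\cap \partial D=\emptyset$, the nonempty open segment $(\xi ,\eta )$ would be contained in the empty set, which is impossible. Therefore $\xi =\eta$, which is precisely Axiom 3'.

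I do not anticipate any genuine obstacle here: the entire analytic content has been front-loaded into Proposition \ref{A3star}, and the remaining step is the same one-line contradiction with strict convexity already used to pass from Proposition \ref{A3star} to Lemma \ref{A3}. The only care required is the dimension-independence of the argument for Proposition \ref{A3star} noted above, which lets it transfer from Section 4 to the Banach-space setting of this section; in effect Lemma \ref{A3'} is an immediate consequence of Proposition \ref{A3star}.
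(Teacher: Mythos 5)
Your proposal is correct and follows exactly the route the paper intends: the paper gives no separate proof of Lemma \ref{A3'}, but justifies it by the remark immediately preceding it that Proposition \ref{A3star} holds verbatim for convex domains of arbitrary Banach spaces (its proof using only condition (\ref{C}), the norm triangle inequality, and the coincidence of topologies), after which strict convexity forces $\xi=\eta$ just as in Lemma \ref{A3}. Your verification that finite-dimensionality is never used in Proposition \ref{A3star}, and the concluding contradiction via $(\xi,\eta)\subset D\cap\partial D$, are precisely the intended argument.
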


Theorem \ref{main_comp} and Lemma \ref{A3'} now yields the following
extension of Theorem \ref{main_Rn}.

\begin{theorem}
	\label{main_Rn_comp}Suppose that $D$ is a strictly convex domain of a Banach
	space and $(D,d)$ is a $(1,\kappa )$-quasi-geodesic space satisfying Axiom
	1' (with respect to the norm closure $(\overline{D},\left\Vert \cdot
	\right\Vert )$) and condition $(C).$ If $f:D\rightarrow D$ is a compact
	nonexpansive mapping without fixed points, then there exists $\xi \in
	\partial D$ such that the iterates $f^{n}$ of $f$ converge uniformly on
	bounded sets of $D$ to $\xi .$
\end{theorem}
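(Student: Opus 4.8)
The plan is to parallel the proof of Theorem~\ref{main_Rn}, replacing the finite-dimensional compactness arguments with the compactness hypothesis on $f$ and with Axioms~1' and 3'. First I would invoke Lemma~\ref{A3'}: since $D$ is a strictly convex domain satisfying condition~(\ref{C}) with norm-coinciding topology, it satisfies Axiom~3'. Together with Axiom~1' (assumed with respect to the norm closure), this places us exactly in the hypotheses needed for the two possible cases. The dichotomy is whether the orbit of $f$ is bounded or not, which by Ca\l ka's theorem (Theorem~\ref{calka}, valid on the proper space $(O(y),d)$ as in Lemma~\ref{lem1_comp}) is the same as asking whether the orbits diverge to infinity.

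If $f$ has unbounded orbits, then the conclusion is immediate from Theorem~\ref{main_comp}, since $D$ is a $(1,\kappa)$-quasi-geodesic space satisfying Axioms~1' and~3' and $f$ is a compact nonexpansive mapping without bounded orbits. So the substantive work lies in ruling out the bounded-orbit case, which I would do by producing a fixed point and thereby contradicting the hypothesis. Assume $\{f^{n}(y)\}$ is bounded for some (hence, by nonexpansivity, every) $y\in D$. I would form the asymptotic radius $r=\inf_{z\in D}\limsup_{n\to\infty}d(z,f^{n}(y))$ and the asymptotic center $A=\{x\in D:\limsup_{n\to\infty}d(x,f^{n}(y))=r\}$, exactly as in Theorem~\ref{main_Rn}. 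The key points are that $A$ is nonempty, convex, $f$-invariant, and compact, after which a fixed-point theorem furnishes the contradiction.

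The main obstacle, and the place where infinite-dimensionality forces extra care, is establishing that $A$ is nonempty and compact and that Brouwer's theorem can be replaced by an appropriate infinite-dimensional fixed-point result. The compactness of $f$ is what saves the argument: since $\overline{f(Y)}$ is compact, the orbit $\{f^{n}(y):n\ge 1\}$ has compact $\overline{d}$-closure, and by Axiom~1' its intersection with any bounded set stays inside $D$, so the function $z\mapsto\limsup_{n}d(z,f^{n}(y))$ attains its infimum on a nonempty compact set $A\subset\overline{f(D)}$. Convexity of $A$ follows from condition~(\ref{C}) as before. Since $A$ is a nonempty compact convex subset of a Banach space invariant under the continuous map $f$, I would apply Schauder's fixed-point theorem (the correct infinite-dimensional analogue of Brouwer's theorem used in Theorem~\ref{main_Rn}) to obtain a fixed point of $f$ in $A$, contradicting the hypothesis that $f$ is fixed-point free. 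This completes the proof.
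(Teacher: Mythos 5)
Your proposal is correct and follows essentially the same route as the paper: the dichotomy between bounded and unbounded orbits, Theorem \ref{main_comp} (via Lemma \ref{A3'}) in the unbounded case, and the asymptotic center plus Schauder's theorem in place of Brouwer's in the bounded case. The one loose point is your assertion that $A$ is compact and contained in $\overline{f(D)}$: neither need hold for the full set of minimizers in an infinite-dimensional space. The paper instead gets nonemptiness of $A$ from the nested family of nonempty compact sets $\overline{f(A_{\varepsilon})}\subset A_{\varepsilon}$ (using $f$-invariance of $A_{\varepsilon}$, Axiom 1' to identify $d$- and norm-closures, and compactness of $f$), and then applies Schauder to the closed, bounded, convex, $f$-invariant set $A$ using only that $\overline{f(A)}$ is compact — which is all that is actually needed.
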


\begin{proof}
	Suppose first that $\{f^{n}(y)\}$ is bounded for some $y\in D$.
	Let\thinspace
	\[
	r=\inf_{z\in D}\limsup_{n\rightarrow \infty }d(z,f^{n}(y))
	\]
	and notice that the asymptotic center
	\[
	A=\{x\in D:\limsup_{n\rightarrow \infty }d(x,f^{n}(y))=r\}
	\]
	of $\{f^{n}(y)\}$ is nonempty. Indeed, $A=\bigcap_{\varepsilon
		>0}A_{\varepsilon }$, where
	\[
	A_{\varepsilon }=\{x\in D:\limsup_{n\rightarrow \infty }d(x,f^{n}(y))\leq
	r+\varepsilon \}
	\]
	and since $f$ is nonexpansive, $f(A_{\varepsilon })\subset A_{\varepsilon }.$
	Hence $\overline{f(A_{\varepsilon })}\subset \overline{A_{\varepsilon }}
	=A_{\varepsilon }$ since $A_{\varepsilon }$ is bounded, $d$-closed and,
	therefore, also $\left\Vert \cdot \right\Vert $-closed from Axiom 1'. Thus
	\[
	\emptyset \neq \bigcap_{\varepsilon >0}\overline{f(A_{\varepsilon })}\subset
	\bigcap_{\varepsilon >0}A_{\varepsilon }=A
	\]
	since $f$ is compact. Moreover, $A$ is bounded closed and convex, $
	f(A)\subset A$ and $\overline{f(A)}$ is compact. It follows from the
	Schauder fixed-point theorem that $f$ has a fixed point which is a
	contradiction.
	
	Therefore, $f:D\rightarrow D$ has unbounded orbits and then the conclusion
	immediately follows from Theorem \ref{main_comp}.
\end{proof}

As discussed in Section 2, every bounded convex domain $D$ of a Banach space
can be equipped with the Hilbert metric $d_{H}$ under which it becomes a
complete geodesic space. Moreover, $(D,d_{H})$ satisfies Axiom 1' (see,
e.g., \cite[Theorem 4.13]{Nu}) and condition $(C)$. Therefore, the following
corollary, which is a special case of \cite[Theorem 4.17]{Nu}, is a direct
consequence of Theorem \ref{main_Rn_comp}.

\begin{corollary}
	Let $D$ be a bounded strictly convex domain of a Banach space. If $
	f:(D,d_{H})\rightarrow (D,d_{H})$ is a fixed-point-free compact nonexpansive
	map then there exists $\xi \in \partial D$ such that the iterates $f^{n}$ of
	$f$ converge uniformly on bounded sets of $D$ to $\xi .$
\end{corollary}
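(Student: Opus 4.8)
The plan is to verify that $(D,d_{H})$ satisfies every hypothesis of Theorem \ref{main_Rn_comp} and then invoke that theorem directly; the statement is explicitly advertised as a direct consequence, so the work is entirely in checking hypotheses. First I would recall from the discussion in Section 2 that any bounded open convex domain $D\subset V$ is isometric to a cross-section $\Sigma_{\varphi}^{\circ}=\{x\in \inter K:\varphi (x)=1\}$ of a closed normal cone $K\subset V\times \mathbb{R}$ equipped with Hilbert's metric $d_{H}$ (see, e.g., \cite[p.~206]{Nu}); under this identification $(D,d_{H})$ is a complete geodesic space whose $d_{H}$-topology coincides with the norm topology. In particular $(D,d_{H})$ is $(1,0)$-quasi-geodesic, hence a fortiori $(1,\kappa )$-quasi-geodesic with $\kappa =0$, so the quasi-geodesic hypothesis of Theorem \ref{main_Rn_comp} is met.

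Next I would check the two structural hypotheses. Axiom 1', taken relative to the norm closure $(\overline{D},\left\Vert \cdot \right\Vert )$, holds for $(D,d_{H})$ by \cite[Theorem 4.13]{Nu}: a sequence $\{x_{n}\}\subset D$ converging in norm to a boundary point of a bounded convex domain necessarily leaves every $d_{H}$-ball about a fixed interior point, so $d_{H}(x_{n},w)\rightarrow \infty $ for each $w\in D$. Condition (\ref{C}) is precisely the content of Lemma \ref{h}, transported through the isometry $D\cong \Sigma_{\varphi}^{\circ}$; thus $d_{H}(sx+(1-s)y,z)\leq \max \{d_{H}(x,z),d_{H}(y,z)\}$ for all $x,y,z\in D$ and $s\in \lbrack 0,1]$, which says exactly that $d_{H}$-balls in $D$ are convex.

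With these verifications in place, $D$ is a strictly convex domain of a Banach space, $(D,d_{H})$ is a $(1,\kappa )$-quasi-geodesic space satisfying Axiom 1' (with respect to the norm closure) and condition (\ref{C}), and $f$ is by hypothesis a compact, nonexpansive, fixed-point-free self-map of $D$. All hypotheses of Theorem \ref{main_Rn_comp} are therefore satisfied, and that theorem furnishes a point $\xi \in \partial D$ such that the iterates $f^{n}$ converge to $\xi $ uniformly on bounded subsets of $D$, which is the desired conclusion.

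I do not anticipate a genuine obstacle, since the corollary is a routine specialization of Theorem \ref{main_Rn_comp}. The only point requiring care is the verification of Axiom 1', which rests on the convex-geometry fact that $d_{H}$-divergence toward $\partial D$ coincides with norm convergence toward $\partial D$; here one should appeal to the external result \cite[Theorem 4.13]{Nu} rather than reprove it, and one must keep track of the fact that this Axiom 1' is stated against the norm compactification even though $\overline{D}$ need not be $d_{H}$-compact in the infinite-dimensional setting, which is exactly why the compactness of $f$ is indispensable in the underlying theorem.
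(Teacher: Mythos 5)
Your proposal is correct and follows essentially the same route as the paper: the authors likewise note that $(D,d_{H})$ is a complete geodesic (hence $(1,0)$-quasi-geodesic) space satisfying Axiom 1' (citing \cite[Theorem 4.13]{Nu}) and condition (\ref{C}) (Lemma \ref{h}), and then deduce the corollary directly from Theorem \ref{main_Rn_comp}. Your additional remarks on the cone cross-section isometry and on why compactness of $f$ is needed are consistent with the paper's Section 2 discussion.
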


The case of the Kobayashi metric $k_{D}$ is more delicate. It is well known
that if $D$ is a bounded convex domain of a complex Banach space then $
(D,k_{D})$ is a complete metric space that satisfies Axiom 1' (see \cite{Ha,
	KRS}) but, in general, it is not a geodesic space. However, it directly
follows from the Lempert characterization of the Kobayashi distance
(presented in Section 2) that $(D,k_{D})$ is a $(1,\varepsilon)$-quasi-geodesic space for any $\varepsilon >0.$ Thus we have the following
consequence of Theorem \ref{main_Rn_comp}.

\begin{corollary}
	Let $D$ be a bounded strictly convex domain of a complex Banach space. If $
	f:(D,k_{D})\rightarrow (D,k_{D})$ is a fixed-point-free compact nonexpansive
	map then there exists $\xi \in \partial D$ such that the iterates $f^{n}$ of
	$f$ converge uniformly on bounded sets of $D$ to $\xi $.
\end{corollary}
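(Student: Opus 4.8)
The plan is to realize this corollary as a direct application of Theorem \ref{main_Rn_comp}, so the entire task reduces to checking that the metric space $(D,k_{D})$ satisfies every hypothesis of that theorem. First I would observe that a complex Banach space is in particular a Banach space, so that $D$, being a bounded strictly convex domain of a complex Banach space, is a strictly convex domain of a Banach space as required by Theorem \ref{main_Rn_comp}.

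Next I would gather the standard facts about the Kobayashi distance already recorded in the surrounding discussion. By \cite{Ha, KRS}, for a bounded convex domain $D$ of a complex Banach space the space $(D,k_{D})$ is complete, its topology coincides with the norm topology, and it satisfies Axiom 1' (with respect to the norm closure $(\overline{D},\left\Vert \cdot \right\Vert )$). Condition $(C)$ is then immediate from Lemma \ref{k}, which gives precisely $k_{D}(sx+(1-s)y,z)\leq \max\{k_{D}(x,z),k_{D}(y,z)\}$ for all $x,y,z\in D$ and $s\in[0,1]$.

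The one genuinely delicate point -- and the reason the quasi-geodesic machinery of Section 5 is needed here rather than the geodesic results of Section 4 -- is that $(D,k_{D})$ is in general \emph{not} a geodesic space. The plan is to circumvent this using the Lempert characterization of $k_{D}$ from Section 2, which shows that $(D,k_{D})$ is a $(1,\varepsilon)$-quasi-geodesic space for every $\varepsilon>0$. Fixing any such $\varepsilon$ and setting $\kappa=\varepsilon$, we obtain exactly the $(1,\kappa)$-quasi-geodesic structure demanded by Theorem \ref{main_Rn_comp}. I expect this to be the main (indeed the only substantive) obstacle, and it is resolved simply by recording the quasi-geodesic property with $\lambda=1$.

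Having verified all hypotheses, and since $f:(D,k_{D})\rightarrow(D,k_{D})$ is by assumption a fixed-point-free compact nonexpansive map, I would conclude by invoking Theorem \ref{main_Rn_comp} directly: there exists $\xi\in\partial D$ such that the iterates $f^{n}$ converge uniformly on bounded sets of $D$ to $\xi$. No further computation is required, as every nontrivial estimate has already been absorbed into Lemmas \ref{lem1_comp}, \ref{A3'} and Theorem \ref{main_comp} underlying Theorem \ref{main_Rn_comp}.
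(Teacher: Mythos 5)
Your proposal is correct and follows essentially the same route as the paper: the authors likewise reduce the corollary to Theorem \ref{main_Rn_comp} by citing completeness and Axiom 1' for $(D,k_{D})$ from \cite{Ha, KRS}, condition (\ref{C}) from Lemma \ref{k}, and the $(1,\varepsilon)$-quasi-geodesic property from the Lempert-type characterization of $k_{D}$ given in Section 2. Nothing is missing.
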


Compact holomorphic fixed-point-free mapppings of the open unit ball of a
Hilbert space were studied in \cite{ChMe}. Then the above result was proved
in \cite[Theorem 4.4]{KKR1}, in the case of the open unit ball of a
uniformly convex space, in \cite[Theorem 4.2]{BKR1}, in the case of a
bounded strictly convex domain of a reflexive space, and finally in
\cite[Theorem 4.1]{BKR2}.

\section{The fixed point property and unbounded sets}

Let $(Y,d)$ be a metric space and fix $w\in Y.$ The \textit{Gromov product} of $
x,y\in Y$ at $w$ is defined to be
\[
(x,y)_{w}=\frac{1}{2}(d(x,w)+d(y,w)-d(x,y)).
\]
Let $\delta \geq 0.$ A metric space $Y$ is said to be \textit{$\delta $-hyperbolic}
if
\[
(x,y)_{w}\geq \min \{(x,z)_{w},(y,z)_{w}\}-\delta
\]
for every $x,y,z,w\in Y$. If $Y$ is $\delta $-hyperbolic for some $\delta
\geq 0$ then $Y$ is called \textit{Gromov hyperbolic}. The Gromov product is used to
define an abstract boundary $Y(\infty )$ called the \textit{ideal (}or \textit{Gromov)
	boundary}. We say that a sequence $\{x_{n}\}$ converges to infinity if $
\lim_{n,m\rightarrow \infty }(x_{n},x_{m})_{w}=\infty $ for some (and hence
any) $w\in Y$. Define $Y(\infty )$ as the set of the equivalence classes of
sequences converging to infinity, where two such sequences $\{x_{n}\}$ and $
\{y_{n}\}$ are equivalent if $\lim_{n,m\rightarrow \infty
}(x_{n},y_{m})_{w}=\infty .$ If $Y$ is a proper geodesic and Gromov
hyperbolic space, then there is a natural metrizable topology on
$\overline{Y }=Y\cup Y(\infty )$ that makes it a compactification of $Y,$ and $Y$ is
open in $\overline{Y}$ (see e.g., \cite[Prop. III.3.7]{BrHa}).

\begin{lemma}
	If $Y$ is Gromov hyperbolic then it satisfies Axiom $3$ with respect to the
	Gromov boundary.
\end{lemma}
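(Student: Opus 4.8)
The plan is to recast the metric hypothesis in terms of Gromov products based at $w$ and then exploit $\delta$-hyperbolicity together with the sequential description of $Y(\infty)=\partial Y$. Recall that, by definition of the topology on $\overline{Y}=Y\cup Y(\infty)$, saying $x_n\to\xi\in\partial Y$ means that $\{x_n\}$ converges to infinity and represents $\xi$; in particular $(x_n,x_m)_w\to\infty$ as $n,m\to\infty$.

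First I would observe that the hypothesis forces $(x_n,y_n)_w\to\infty$. Indeed,
\[
(x_n,y_n)_w=\tfrac12\big(d(x_n,w)+d(y_n,w)-d(x_n,y_n)\big)=\tfrac12\big(d(x_n,w)+[d(y_n,w)-d(x_n,y_n)]\big),
\]
and the assumption $d(x_n,y_n)-d(y_n,w)\to-\infty$ is exactly $d(y_n,w)-d(x_n,y_n)\to+\infty$; since $d(x_n,w)\ge0$ this yields $(x_n,y_n)_w\to\infty$.

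Next I would run the standard four-point estimates. Iterating the $\delta$-inequality on the triples $(y_n,x_n,y_m)$ and $(x_n,x_m,y_m)$ gives
\[
(y_n,y_m)_w\ge\min\{(y_n,x_n)_w,(x_n,x_m)_w,(x_m,y_m)_w\}-2\delta .
\]
Letting $n,m\to\infty$, the diagonal terms tend to $\infty$ by the previous step and $(x_n,x_m)_w\to\infty$ since $\{x_n\}$ converges to infinity; hence $(y_n,y_m)_w\to\infty$, i.e.\ $\{y_n\}$ converges to infinity. A single application of the $\delta$-inequality then gives
\[
(x_n,y_m)_w\ge\min\{(x_n,x_m)_w,(x_m,y_m)_w\}-\delta\longrightarrow\infty ,
\]
so $\{y_n\}$ is equivalent to $\{x_n\}$ and therefore represents the same ideal point $\xi$.

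Finally I would conclude by the characterization of convergence in the Gromov compactification: a sequence in $Y$ that converges to infinity and lies in the equivalence class $\xi$ converges to $\xi$ in $\overline{Y}$ (see \cite{BrHa}). Thus $y_n\to\xi$, which is precisely Axiom $3$ relative to the Gromov boundary. I expect the only delicate point to be the translation between the metric condition and the Gromov-product/boundary language; once $(x_n,y_n)_w\to\infty$ is established, the hyperbolicity estimates and the conclusion are routine.
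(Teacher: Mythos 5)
Your proof is correct and follows essentially the same route as the paper's: both rewrite the hypothesis as $(x_n,y_n)_w\to\infty$ and then apply the $\delta$-hyperbolicity inequality to get $(x_n,y_m)_w\to\infty$, concluding $y_n\to\xi$. Your intermediate step verifying that $\{y_n\}$ converges to infinity is a harmless elaboration that the paper leaves implicit.
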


\begin{proof}
	Suppose that $\{x_{n}\}$ and $\{y_{n}\}$ are sequences in $Y$, $
	x_{n}\rightarrow \xi \in Y(\infty )$ and
	\[
	d(x_{n},y_{n})-d(y_{n},w)\rightarrow -\infty
	\]
	for some $w\in Y.$ Then $(x_{n},x_{m})_{w}\rightarrow \infty $ and
	\[
	(x_{m},y_{m})_{w}=\frac{1}{2}(d(x_{m},w)+d(y_{m},w)-d(x_{m},y_{m}))
	\rightarrow \infty .
	\]
	Hence
	\[
	(x_{n},y_{m})_{w}\geq \min \{(x_{n},x_{m})_{w},(y_{m},x_{m})_{w}\}-\delta
	\rightarrow \infty
	\]
	which means that $y_{n}\rightarrow \xi .$
\end{proof}

Therefore, if we recall Theorem \ref{main2}, we see at once that the
Wolff--Denjoy theorem holds for any complete locally compact geodesic space
$Y$ which is Gromov hyperbolic. It was Karlsson who observed that properness
of $Y$ is in fact not needed (see \cite[Prop. 5.1]{Ka1}).

\begin{theorem}
	\label{K}Let $Y$ be a Gromov hyperbolic space and suppose that $f$ is a
	nonexpansive mapping such that $d(y,f^{n}(y))\rightarrow \infty $ for some $
	y\in Y.$ Then for every $x\in Y$ the orbit $\{f^{n}(x)\}$ converges to a
	point $\xi $ on the Gromov boundary $Y(\infty ).$
\end{theorem}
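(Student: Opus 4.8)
The plan is to follow Karlsson's observation that, for Gromov hyperbolic spaces, the record/subadditivity structure of the orbit replaces the compactness argument used in the proper case. Fix the point $y\in Y$ for which $a_{n}:=d(y,f^{n}(y))\to\infty$, and abbreviate $x_{n}=f^{n}(y)$. First I would invoke the record selection (Observation 3.1 in \cite{Ka1}, already used above) to extract indices $n_{1}<n_{2}<\cdots$ with $a_{n_{i}}>a_{m}$ for every $m<n_{i}$. The whole argument will use only nonexpansiveness in the form $d(x_{p},x_{q})=d(f^{p}(y),f^{q}(y))\le d(y,f^{|p-q|}(y))=a_{|p-q|}$, together with the $\delta$-inequality; properness is never invoked.

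Next I would show that the record subsequence converges to infinity. For $i<j$, writing out the Gromov product and using the nonexpansive bound,
\[
(x_{n_{i}},x_{n_{j}})_{y}=\tfrac{1}{2}\big(a_{n_{i}}+a_{n_{j}}-d(x_{n_{i}},x_{n_{j}})\big)\ge \tfrac{1}{2}\big(a_{n_{i}}+a_{n_{j}}-a_{n_{j}-n_{i}}\big).
\]
Since $n_{j}-n_{i}<n_{j}$ and $n_{j}$ is a record, $a_{n_{j}}>a_{n_{j}-n_{i}}$, so $(x_{n_{i}},x_{n_{j}})_{y}\ge\tfrac{1}{2}a_{n_{i}}\to\infty$. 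Hence $\{x_{n_{i}}\}$ converges to infinity and determines a point $\xi\in Y(\infty)$. The identical computation with an arbitrary $n<n_{i}$ in place of $n_{i}$ gives $(x_{n},x_{n_{i}})_{y}\ge\tfrac{1}{2}a_{n}$ whenever $n<n_{i}$; since there are infinitely many records above any fixed $n$, this yields $\lim_{n\to\infty}\liminf_{i\to\infty}(x_{n},x_{n_{i}})_{y}=\infty$, which is exactly the statement that the full orbit $\{x_{n}\}=\{f^{n}(y)\}$ converges to $\xi$ in the bordification.

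Finally I would pass from $y$ to an arbitrary $x\in Y$. Nonexpansiveness gives $d(f^{n}(x),f^{n}(y))\le d(x,y)=:c$ for all $n$, whence
\[
(f^{n}(x),f^{n}(y))_{y}\ge d(f^{n}(y),y)-c=a_{n}-c\to\infty.
\]
Combining this with $f^{n}(y)\to\xi$ through the $\delta$-inequality in its boundary form, $(f^{n}(x),\xi)_{y}\ge\min\{(f^{n}(x),f^{n}(y))_{y},(f^{n}(y),\xi)_{y}\}-2\delta$, shows $(f^{n}(x),\xi)_{y}\to\infty$, so $f^{n}(x)\to\xi$, as required.

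The delicate points are bookkeeping rather than conceptual. First, the one-sided estimate $(x_{n},x_{n_{i}})_{y}\ge\tfrac{1}{2}a_{n}$ is valid only for $n<n_{i}$, and I must check that it genuinely forces convergence in the Gromov bordification; this requires care with the order of the two limits in the definition of boundary convergence. Second, I must justify the boundary form of the $\delta$-inequality used in the last step, where one replaces $\xi$ by approximating record points and absorbs the error into a bounded multiple of $\delta$. The conceptual heart, which I would emphasize, is that the record property (not compactness) is what makes the Gromov products blow up, so the hypothesis of properness can be dropped entirely, exactly as Karlsson observed.
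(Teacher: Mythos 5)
Your proposal is correct and follows essentially the same route as the paper: Karlsson's record-time selection, the estimate $(f^{k}(y),f^{\varphi(i)}(y))_{y}\geq\tfrac{1}{2}d(f^{k}(y),y)$ for $k\leq\varphi(i)$ coming from nonexpansiveness and the record property, and then transferring to an arbitrary $x$ via $d(f^{n}(x),f^{n}(y))\leq d(x,y)$. The only difference is cosmetic bookkeeping at the end (you pass through the full orbit of $y$ and a boundary form of the $\delta$-inequality, while the paper compares $f^{i}(x)$ directly with the record subsequence), so no further comment is needed.
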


\begin{proof}
	Let $\{f^{\varphi (i)}(y)\}$ be a subsequence of $\{f^{n}(y)\}$ such that $
	d(y,f^{k}(y))\leq d(y,f^{\varphi (i)}(y))$ for $k\leq \varphi (i), \ i=1,2,\ldots$.
	Then
	\begin{eqnarray*}
		(f^{k}(y),f^{\varphi (i)}(y))_{y} &=&\frac{1}{2}(d(f^{k}(y),y)+d(f^{\varphi
			(i)}(y),y)-d(f^{k}(y),f^{\varphi (i)}(y))) \\
		&\geq &\frac{1}{2}(d(f^{k}(y),y)+d(f^{\varphi (i)}(y),y)-d(y,f^{\varphi
			(i)-k}(y))) \\
		&\geq &\frac{1}{2}d(f^{k}(y),y)
	\end{eqnarray*}
	for each $k\leq \varphi (i).$ In particular,
	\[
	(f^{\varphi (i)}(y),f^{\varphi (j)}(y))_{y}\geq \frac{1}{2}\min
	\{d(f^{\varphi (i)}(y),y),d(f^{\varphi (j)}(y),y)\}\rightarrow \infty \mbox{
		as }i,j\rightarrow \infty ,
	\]
	and hence there exists $\xi \in Y(\infty )$ such that $f^{\varphi
		(i)}(y)\rightarrow \xi.$
	It follows that
	\begin{eqnarray*}
		(f^{i}(x),f^{\varphi (i)}(y))_{y} &\geq &(f^{i}(y),f^{\varphi
			(n)}(y))_{y}-2d(x,y) \\
		&\geq &\frac{1}{2}d(f^{i}(y),y)-2d(x,y)\rightarrow \infty \mbox{ as }
		i\rightarrow \infty
	\end{eqnarray*}
	for any $x\in Y.$ Therefore, $f^{n}(x)\rightarrow \xi \in Y(\infty ).$
\end{proof} 

If $Y$ is a geodesic and $\delta$-hyperbolic then,
given three points
$x,y,z\in Y\,$and $y^{\prime }\in \lbrack x,y], \ z^{\prime }\in \lbrack x,z]$
such that $d(x,y^{\prime })=d(x,z^{\prime })\leq (y,z)_{x}$, then
$d(y^{\prime },z^{\prime })\leq 4\delta $ (see, e.g., \cite[Prop. 2.1.3]{BuSch}).
A subset $K$ of a geodesic space $Y$
is said to be \textit{geodesically bounded} if it does not contain any
geodesic ray, that is, there does not exist a map $\sigma :[0,\infty
)\rightarrow K$ such that $d(\sigma (t_{1}),\sigma (t_{2}))=\left\vert
t_{1}-t_{2}\right\vert $ for all $t_{1},t_{2}\in \lbrack 0,\infty ).$ A
subset $K$ of $Y$ is said to be \textit{geodesically convex} if a geodesic $
[x,y]\subset K$ for any $x,y\in K$. In 2015, Pi\c{a}tek \cite{Pi1} obtained
an interesting characterization of geodesic boundedness in spaces of
negative curvature CAT$(\kappa )$ (see \cite{BrHa} for a thorough treatment
of these spaces). We show how to apply Karlsson's theorem \ref{K} to
simplify the arguments a little bit.

Recall that a geodesic space $(Y,d)$ is called \textit{Busemann convex} if $
d(z_{\alpha },z_{\alpha }^{\prime })\leq (1-\alpha )d(x,x^{\prime })+\alpha
d(y,y^{\prime })$ for every $x,y,x^{\prime },y^{\prime }\in Y$ and $\alpha
\in \lbrack 0,1]$, where $z_{\alpha },z_{\alpha }^{\prime }$ are points on
geodesic segments $[x,y],[x^{\prime },y^{\prime }]$, respectively, such that
$d(x,z_{\alpha })=\alpha d(x,y)$ and $d(x^{\prime },z_{\alpha }^{\prime
})=\alpha d(x^{\prime },y^{\prime })$. The following lemma was proved in
\cite[Lemma 2.3]{Pi2}.

\begin{lemma}
	\label{P}Let $Y$ be a complete Busemann convex and $\delta $-hyperbolic
	space for some $\delta \geq 0.$ Then for each $w\in Y$ and a sequence $
	\{x_{n}\}$ of $Y$ such that $(x_{n},x_{m})_{w}\rightarrow \infty $ as $
	n,m\rightarrow \infty $, the geodesics $\sigma
	_{n}:[0,d(w,x_{n})]\rightarrow Y$ joining $w$ and $x_{n}$ converge to some
	geodesic ray $\sigma :[0,\infty )\rightarrow Y.$
\end{lemma}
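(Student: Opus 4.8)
The plan is to show that for each fixed $t\ge 0$ the sequence $\{\sigma_n(t)\}$ is Cauchy; completeness then produces a limit point $\sigma(t)$, and a short continuity argument identifies $\sigma$ as a geodesic ray. The two hypotheses play complementary roles: $\delta$-hyperbolicity controls the geodesics far from $w$, while Busemann convexity transports this control back toward $w$, and the combination is what forces genuine (not merely subsequential) convergence in the absence of properness.

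First I would record that $(x_n,x_m)_w\to\infty$ forces $L_n:=d(w,x_n)\to\infty$ (take $m=n$ in the definition of convergence to infinity), so the geodesics $\sigma_n$ are eventually defined on any fixed interval. Fix $t\ge0$ and $\varepsilon>0$, and choose $T\ge t$ so large that $4\delta t/T<\varepsilon$. For $n,m$ large enough that $L_n,L_m\ge T$ and $(x_n,x_m)_w\ge T$, I would apply the stated consequence of $\delta$-hyperbolicity with $x=w$, $y=x_n$, $z=x_m$ and the points $\sigma_n(T)\in[w,x_n]$, $\sigma_m(T)\in[w,x_m]$: since $d(w,\sigma_n(T))=d(w,\sigma_m(T))=T\le (x_n,x_m)_w$, this gives $d(\sigma_n(T),\sigma_m(T))\le4\delta$.

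Next I would funnel this estimate inward using Busemann convexity. The restrictions $\sigma_n|_{[0,T]}$ and $\sigma_m|_{[0,T]}$ are geodesics issuing from the common point $w$ and ending at $\sigma_n(T)$, $\sigma_m(T)$; applying Busemann convexity with $x=x'=w$, $y=\sigma_n(T)$, $y'=\sigma_m(T)$ and parameter $\alpha=s$ gives, for every $s\in[0,1]$,
\[
d(\sigma_n(sT),\sigma_m(sT))\le (1-s)\,d(w,w)+s\,d(\sigma_n(T),\sigma_m(T))\le 4\delta s.
\]
Evaluating at $s=t/T$ yields $d(\sigma_n(t),\sigma_m(t))\le 4\delta t/T<\varepsilon$, so $\{\sigma_n(t)\}$ is Cauchy, and since the bound is uniform for $t$ in any bounded interval the convergence is uniform on compact sets. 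I then define $\sigma(t)=\lim_n\sigma_n(t)$, which exists by completeness, and for $t_1,t_2\ge0$ I pass to the limit in $d(\sigma_n(t_1),\sigma_n(t_2))=|t_1-t_2|$ (valid once $L_n\ge\max\{t_1,t_2\}$) using continuity of $d$, so that $\sigma:[0,\infty)\to Y$ is a geodesic ray.

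The main obstacle is precisely the step that replaces properness: in a general $\delta$-hyperbolic space the initial segments of $\sigma_n$ and $\sigma_m$ need only remain $4\delta$-close, which by itself does not yield a Cauchy sequence. The decisive point is that Busemann convexity converts the fixed $4\delta$ defect at radius $T$ into a defect of order $4\delta t/T$ at radius $t$, which tends to $0$ as the far endpoints are pushed to infinity. I would therefore be careful to apply the convexity inequality to two geodesics sharing the left endpoint $w$, so that the $(1-\alpha)$ term vanishes, and to justify the eventual definition of $\sigma_n$ on $[0,T]$ by $L_n\to\infty$.
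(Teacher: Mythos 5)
Your proof is correct and follows essentially the same route as the paper: both arguments obtain the $4\delta$ bound from $\delta$-hyperbolicity at a radius controlled by $(x_n,x_m)_w$ and then use Busemann convexity from the common endpoint $w$ to scale this defect down to $4\delta t/T$ (the paper's $4\delta r/(x_n,x_m)_w$), yielding the Cauchy property and the limiting geodesic ray. The only differences are cosmetic (your auxiliary radius $T$ versus the paper's choice of points at distance exactly $(x_n,x_m)_w$, and the paper's separate treatment of $\delta=0$, which your inequality absorbs automatically).
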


\begin{proof}
	Fix $r>0$ and select on each geodesic segment $[w,x_{n}]$ such that $
	d(w,x_{n})>r$ a point $u_{n}$ such that $d(w,u_{n})=r.$ We show that $
	\{u_{n}\}$ is a Cauchy sequence. If $\delta =0$ then from $0$-hyperbolicity $
	\{u_{n}\}$ is constant so we can assume that $\delta >0.$ Let $\varepsilon
	>0.$ Then $(x_{n},x_{m})_{w}>\frac{4\delta r}{\varepsilon }$ for sufficiently
	large $n,m\geq n_{0}$ and since $(x_{n},x_{m})_{w}\leq \max
	\{d(x_{n},w),d(x_{m},w)\}$ there exist points $z_{n}\in \lbrack
	w,x_{n}], \ z_{m}\in \lbrack w,x_{m}]$ such that $
	d(z_{n},w)=d(z_{m},w)=(x_{n},x_{m})_{w}$. It follows from $\delta$-hyperbolicity 
	that $d(z_{n},z_{m})\leq 4\delta $ and since $Y$ is Busemann
	convex we have
	\[
	d(u_{n},u_{m})\leq \frac{d(w,u_{n})}{d(w,z_{n})}d(z_{n},z_{m})\leq \frac{r}{
		(x_{n},x_{m})_{w}}4\delta <\varepsilon
	\]
	for $n,m\geq n_{0}.$ It follows that $\{u_{n}\}$ is a Cauchy sequence and
	thus for every $r>0$ there exists a limit $\sigma (r)=\lim_{n\rightarrow
		\infty }\sigma _{n}(r).$ Clearly,
	\[
	d(\sigma (t_{1}),\sigma (t_{2}))=\lim_{n\rightarrow \infty }d(\sigma
	_{n}(t_{1}),\sigma _{n}(t_{2}))=\left\vert t_{1}-t_{2}\right\vert
	\]
	for $t_{1},t_{2}\in \lbrack 0,\infty )$ and hence $\sigma $ is a geodesic
	ray.
\end{proof}

Combining Theorem \ref{K} with Lemma \ref{P} gives a simpler proof of
\cite[Theorem 4.1]{Pi1} (see also \cite{Pi2, Pi3} for more general results).

\begin{theorem}
	Suppose that $K$ is a nonempty closed and geodesically convex subset of a
	complete CAT$(\kappa )$ space with $\kappa <0.$ Then $K$ has the fixed point
	property for nonexpansive mappings if and only if $K$ is geodesically
	bounded.
\end{theorem}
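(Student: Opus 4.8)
The plan is to prove the two implications separately, disposing of the forward direction (fixed point property $\Rightarrow$ geodesic boundedness) by contraposition and reserving the hyperbolic machinery of Theorem \ref{K} and Lemma \ref{P} for the converse. Throughout I will use that a CAT$(\kappa)$ space with $\kappa<0$ is uniquely geodesic, is itself CAT$(0)$ (hence Busemann convex) and is $\delta$-hyperbolic, and that a closed geodesically convex subset $K$ inherits all of these properties, the geodesics of $K$ coinciding with the ambient ones.

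For the \emph{contrapositive} of the forward direction, I would assume $K$ is not geodesically bounded and fix a geodesic ray $\sigma:[0,\infty)\to K$. Its image $R=\sigma([0,\infty))$ is isometric to $[0,\infty)$, hence complete, and is convex in $K$ by uniqueness of geodesics, so the nearest-point projection $P_R:K\to R$ is a well-defined nonexpansive retraction in the complete CAT$(0)$ space $K$. Composing it with the unit shift $T\colon\sigma(t)\mapsto\sigma(t+1)$, an isometric self-embedding of $R$, produces a nonexpansive map $f=T\circ P_R:K\to K$. Since $f(K)\subset R$, any fixed point would be some $x=\sigma(u)$; but then $P_R(x)=x$ and $f(x)=\sigma(u+1)\neq x$, so $f$ is fixed-point free and $K$ fails the fixed point property.

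For the \emph{main direction}, suppose $K$ is geodesically bounded, let $f:K\to K$ be nonexpansive, fix $y\in K$, and split according to whether the orbit $\{f^{n}(y)\}$ is bounded. If it \emph{is} bounded, then since $K$ is complete CAT$(0)$ the asymptotic centre $z$ of $\{f^{n}(y)\}$, the unique minimiser of $x\mapsto\limsup_{n}d(x,f^{n}(y))$, exists in $K$; nonexpansivity gives
\[
\limsup_{n}d(f(z),f^{n}(y))\le\limsup_{n}d(z,f^{n-1}(y))=\limsup_{n}d(z,f^{n}(y)),
\]
so $f(z)$ is again a minimiser and $f(z)=z$ by uniqueness, producing a fixed point. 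The remaining case is where I expect the main obstacle: ruling out an \emph{unbounded} orbit without properness. Because CAT$(\kappa)$ spaces need not be proper, Ca\l ka's Theorem \ref{calka} is unavailable and one cannot pass freely from ``unbounded'' to ``$d(y,f^{n}(y))\to\infty$''. To sidestep this I would not verify the hypothesis of Theorem \ref{K} directly, but instead run only its Gromov-product engine on the record subsequence: extract $\{f^{\varphi(i)}(y)\}$ with $d(y,f^{\varphi(i)}(y))\to\infty$ and $d(y,f^{m}(y))\le d(y,f^{\varphi(i)}(y))$ for $m\le\varphi(i)$, which exists precisely because $\sup_{n}d(y,f^{n}(y))=\infty$. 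For $i\le j$ nonexpansivity and the record property give $d(f^{\varphi(i)}(y),f^{\varphi(j)}(y))\le d(y,f^{\varphi(j)}(y))$, whence
\[
(f^{\varphi(i)}(y),f^{\varphi(j)}(y))_{y}\ge\tfrac12\,d(y,f^{\varphi(i)}(y))\longrightarrow\infty ,
\]
so this subsequence converges to infinity.

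Lemma \ref{P} then furnishes a geodesic ray as the limit of the geodesics $[y,f^{\varphi(i)}(y)]$; since each of these lies in $K$ by geodesic convexity and $K$ is closed, the limiting ray lies in $K$, contradicting geodesic boundedness. Hence every orbit is bounded, the first case applies, and $f$ has a fixed point. The delicate points I would check are that $K$, as a closed geodesically convex subset, is genuinely a complete Busemann convex $\delta$-hyperbolic space so that Lemma \ref{P} and the Gromov boundary apply inside $K$, and that the record estimate uses only $\sup_{n}d(y,f^{n}(y))=\infty$ rather than full divergence, which is exactly what lets the argument remain in the spirit of Karlsson's Theorem \ref{K} while bypassing properness.
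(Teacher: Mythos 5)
Your proposal is correct and follows essentially the same route as the paper: the forward direction via the nearest-point projection onto a geodesic ray composed with the shift, and the converse by splitting into the bounded-orbit case (asymptotic centre yields a fixed point) and the unbounded-orbit case, where the record subsequence argument from Theorem \ref{K} gives divergent Gromov products and Lemma \ref{P} produces a geodesic ray in $K$. Your observation that the record-subsequence estimate needs only $\sup_n d(y,f^n(y))=\infty$ rather than Ca\l ka-type full divergence is exactly the point the paper exploits by passing to the subsequence $\{f^{n_i}(y)\}$.
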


\begin{proof}
	If $K$ is not geodesically bounded then there exists a geodesic ray $\sigma
	:[0,\infty )\rightarrow K$ and hence the composition of the metric
	projection $\pi $ of $K$ onto $\sigma ([0,\infty ))$ with the shift operator
	is a fixed-point-free nonexpansive mapping (see, e.g.,
	\cite[ Prop. II.2.4]{BrHa}).
	
	To prove the converse, suppose on the contrary that there is a fixed point
	free nonexpansive map $f:K\rightarrow K.$ Fix $y\in K$ and notice that $
	d(f^{n}(y),y)$ is unbounded since otherwise, the asymptotic center
	\[
	A=\{x\in K:\limsup_{n\rightarrow \infty }d(x,f^{n}(y))=\inf_{z\in
		K}\limsup_{n\rightarrow \infty }d(z,f^{n}(y))\}
	\]
	of a sequence $\{f^{n}(y)\}$ consists of a single point that is fixed under $
	f$, see e.g., \cite[Proposition 7]{DKS}. Therefore, there is a subsequence $
	\{f^{n_{i}}(y)\}$ of $\{f^{n}(y)\}$ such that $d(f^{n_{i}}(y),y)\rightarrow
	\infty $ as $i\rightarrow \infty $ and arguing as in the proof of Theorem %
	\ref{K} (with $f^{n_{i}}$ instead of $f^{n}$) we conclude that $
	(f^{n_{i}}(y),f^{n_{j}}(y))_{y}\rightarrow \infty $ as $i,j\rightarrow
	\infty .$ Since for every $\kappa <0$, CAT$(\kappa )$ is Busemann convex and
	Gromov hyperbolic (see, e.g., \cite[Prop. III.1.2]{BrHa}), it follows from
	Lemma \ref{P} that there is a geodesic ray $\sigma :[0,\infty )\rightarrow K$, which is a contradiction.
\end{proof}

\end{document}